\date{\today}
\theoremstyle{plain}
\newtheorem{teo}{Theorem}[section]
\newtheorem{lema}[teo]{Lemma}
\newtheorem{prop}[teo]{Proposition}
\newtheorem{coro}[teo]{Corollary}
\newtheorem{convencion}[teo]{\bf{Convention}}
\theoremstyle{definition}
\newtheorem{defi}[teo]{Definition}
\newtheorem{obs}[teo]{Remark}
\newtheorem{obss}[teo]{Remarks}
\newtheorem{ejem}[teo]{Example}
\newcommand{\ts}[1]{\normalfont{\textsf{#1}}}
\newcommand{\K}{\ts k}
\renewcommand{\a}{\alpha}
\renewcommand{\b}{\beta}
\def\Hom{\mathop{\rm Hom_{\mathcal{C}_m}}\nolimits}
\def\P{\mathop{P_{p,q,m}}\nolimits}
\def\D{\mathop{\Delta}\nolimits}
\def\m{\mathop{(m+2)}\nolimits}
\def\End{\mathop{\rm End_{\mathcal{C}_m}}\nolimits}
\def\C{\mathop{\mathcal{C}_m}\nolimits}
\def\Ho{\mathop{\rm Hom}\nolimits}
\def\dim{\mathop{\rm dim}\nolimits}
\newcommand{\mor}[3]{$#1\colon #2 \to #3$}
\title[ $m$-cluster tilted algebras of type $\widetilde{\mathbb{A}}$]{ $m$-cluster tilted algebras of type $\widetilde{\mathbb{A}}$}
\author[V. Gubitosi]{Viviana Gubitosi}
\address{Instituto de Matem\'{a}tica y Estad\'{\i}stica Rafael Laguardia, Facultad de Ingenier\'{\i}a - UdelaR, Montevideo, Uruguay, 11200 }
\email{gubitosi@fing.edu.uy}
\keywords{$m$-cluster tilted algebras; gentle algebras}
\begin{document}
\maketitle

\begin{abstract}
 In this paper, we characterize all the finite dimensional algebras that are  $m-$cluster tilted algebras of type $\widetilde{\mathbb{A}}$. We show that these algebras are gentle and we give an explicit description of their quivers with relations.
\end{abstract}

\section*{Introduction} Cluster categories were introduced in \cite{Buan2006}, and simultaneously in \cite{CSS06} for the case $\mathbb{A}$,  in order to model the combinatorics of the cluster algebras of Fomin and Zelevinski \cite{FZ02}, using  tilting theory. The clusters correspond to the tilting objects in the cluster category.

Roughly speaking, given an hereditary finite dimensional algebra $H$ over an algebraically closed field, the cluster category $\mathcal{C}_H$ is obtained from the derived category $\mathcal{D}^b(H)$ by identifying the shift functor $[1]$ with the Auslander - Reiten translation $\tau$. By a result of Keller \cite{K05}, the cluster category is triangulated, and the same holds for  the category obtained from $\mathcal{D}^b(H)$ by identifying the composition $[m]:=[1]^m$ with $\tau$. The latter is called an $m$-cluster category, in which $m$-cluster tilting objects have been defined by Thomas, in \cite{Thomas2007}, who in addition showed that they are in bijective correspondence with the $m$-clusters associated by  Fomin and Reading to a finite root system in \cite{FR05}. The endomorphism algebras of the $m$-cluster tilting objects are called $m$-cluster tilted algebras or, in case $m=1$, cluster tilted algebras.

Caldero, Chapoton and Schiffler gave in \cite{CSS06} an interpretation of the cluster categories $\mathcal{C}_H$ in case $H$ is hereditary of Dynkin type $\mathbb{A}$ in terms of triangulations of the disc with marked points on its boundary, using an approach also present in \cite{FST08} in a much more general setting. This has been generalized by several authors. For instance, Baur and Marsh in \cite{BM08} considered $m$-angulations of the disc, modeling the $m$-cluster categories. In \cite{ABCP09}, Assem \emph{et al.} showed that cluster tilted algebras coming from triangulations of the disc or the annulus with marked points on their boundaries are gentle, and, in fact, that these are the only gentle cluster tilted algebras. The class of gentle algebras defined by Assem and Skowro\'nski in \cite{AS87} has been extensively studied in \cite{AH81, Avella-Alaminos2008, BB10, Buan2008, Murphy2010, SZ03}, for instance, and is particularly well understood, at least from the representation theoretic point of view. This class includes, among others, iterated tilted and cluster
tilted algebras of types $\mathbb{A}$ and $\tilde{\mathbb{A}}$, and, as shown in \cite{SZ03}, is closed under derived equivalence. In \cite{Murphy2010}, Murphy gives a description of the $m$-cluster tilted algebras of  type $\mathbb{A}$ in terms of quivers and relations. To do so, he works with the geometric realization of the $m$-cluster category of type $\mathbb{A}$   made by Baur and Marsh in \cite{BM08}. In \cite{GubBust} we give a classification up to derived equivalence of the $m$-cluster tilted algebras of  type $\mathbb{A}$, see \cite{BB10} for the case $m=1$.

In the present paper, we completely classify the $m$-cluster tilted algebras of  type $\tilde{\mathbb{A}}$ in terms of quivers and relations, using the geometric model proposed by Torkildsen in \cite{T12-arxiv}.


We now state the main result of this paper (for the definitions of the terms used, we refer the reader to section 7 below).

\subsection*{Theorem }\textit{ A connected algebra $A=\K Q/I$ is  a connected component of an $m$-cluster tilted algebra of type $\widetilde{\mathbb{A}}$ if and only if  $(Q,I)$ is a gentle bound quiver satisfying the following conditions: }

\begin{itemize}
  \item  [(a)]\textit{It can contain a non-saturated  cycle  $\widetilde{\mathcal{C}}$ in such a way that $A$ is an algebra with root $\widetilde{\mathcal{C}}$.}
  \item [(b)]\textit{If  $\widetilde{\mathcal{C}}$ is an oriented  cycle, then it must have at least one internal relation.}
  \item [(c)]\textit{If the quiver contains  more cycles, then all of them are  $m$-saturated.}
  \item [(d)]\textit{Outside of an $m$-saturated cycle it can contain  at most  $m-1$ consecutive relations.}
  \item [(e)]\textit{If there are internal relations in the root cycle, then  the number of clockwise oriented relations is equal modulo $m$  to the number of counterclockwise oriented.}
\end{itemize}

\medskip

It is important to observe that $m$-cluster tilted algebras  with $m\geq 2$ need not be connected and, as we shall see, their type is not uniquely determined, as we show below. This fact is essential in \cite{FT}.

\medskip
The paper is organized as follows: In section 1 we recall facts about gentle algebras, $m$-cluster tilted algebras and a geometric model of the $m$-cluster category of type $\widetilde{\mathbb{A}}$. We use this section to fix some notation. In sections 2 and 3 we establish some properties of $\m$-angulations  and  $m$-cluster categories that will be used in the sequel. In section 4   we study the $m$-cluster tilting objects. Sections 5 and 6 are devoted  to the  study of the bound quiver of an $m$-cluster tilted algebra of type  $\widetilde{\mathbb{A}}$.  We continue in section 7 with some consequences, among which, we prove that $m$-cluster tilted algebras of type $\widetilde{\mathbb{A}}$ are gentle.



\section{Preliminaries}
\subsection{Gentle algebras}

While we briefly recall some  concepts concerning bound quivers and algebras, we refer the reader to \cite{ASS06} or \cite{ARS95}, for instance, for unexplained notions.

Let \K \  be a commutative field. A \emph{quiver} $Q$ is the data of two sets, $Q_0$ (the \textit{vertices}) and $Q_1$ (the \textit{arrows}) and two maps \mor{s,t}{Q_1}{Q_0} that assign to each arrow $\a$ its \textit{source} $s(\a)$ and its \textit{target} $t(\a)$. We write \mor{\a}{s(\a)}{t(\a)}.  If $\b\in Q_1$ is such that $t(\a)=s(\b)$ then the composition of $\a$ and $\b$ is the path $\a\b$. This extends naturally to paths of arbitrary positive length. The \emph{path algebra} $\K Q$ is the $\K$-algebra whose basis is the set of all paths in $Q$, including one stationary path $e_x$ at each vertex $x\in Q_0$, endowed with the  multiplication induced from the composition of paths. In case $|Q_0|$ is finite, the sum of the stationary paths  - one for each vertex - is the identity.

If the quiver $Q$ has no oriented cycles, it is called \emph{acyclic}. A \emph{relation} in $Q$ is a $\K$-linear combination of paths of length at least $2$ sharing source and target.  A relation which is a path is called \emph{monomial}, and the relation is \emph{quadratic} if the paths appearing in it have all length $2$. Let $\mathcal{R}$ be a set of relations.
 Let $\langle Q_1 \rangle$ denote the two-sided ideal of $\K Q$ generated by the arrows, and $I$ be the one generated by $\mathcal{R}$. Then $I\subseteq \langle Q_1\rangle ^2$.
 The ideal $I$ is called \emph{admissible} if there exists a natural number $r\geqslant 2$ such that $\left\langle Q_1 \right\rangle^r \subseteq I$. The pair $(Q,I)$ is a \emph{bound quiver}, and associated to it is the algebra $A=\K Q/I$.
It is known that any finite dimensional basic and connected algebra over an algebraically closed field is obtained in this way, see \cite{ASS06}, for instance.

Recall from \cite{AS87} that an algebra  $A= \K Q/I$ is said to be \emph{gentle} if  $I=\langle \mathcal{R} \rangle $, with $\mathcal{R}$ a set of monomial quadratic relations such that :
\begin{enumerate}
 \item[G1.] For every vertex $x\in Q_0$ the sets $s^{-1}(x)$ and $t^{-1}(x)$ have cardinality at most two;
 \item[G2.] For every arrow $\a\in Q_1$ there exists at most one arrow $\b$ and one arrow $\gamma$ in $ Q_1$ such that $\a\b\not\in I$, $\gamma\a\not\in I $;
 \item[G3.] For every arrow $\a\in Q_1$ there exists at most one arrow $\b$ and one arrow $\gamma$ in $Q_1$ such that $\a\b\in I$, $\gamma\a\in I$.
\end{enumerate}

Gentle algebras are special biserial (see \cite{WW85}), and have  been extensively studied in several contexts, see for instance \cite{Avella-Alaminos2008, BB10, Buan2008, Murphy2010, SZ03}.

\subsection{$m$-cluster tilted algebras}

Let $H\simeq \K Q$ be an hereditary algebra such that $Q$ is acyclic. The derived category $\mathcal{D}^b(H)$ is triangulated, the translation functor, denoted by $[1]$, being induced from the shift of complexes. For an integer $n$, we denote by $[n]$ the composition of $[1]$ with itself $n$ times, thus $[1]^n = [n]$. In addition, $\mathcal{D}^b(A)$ has Auslander-Reiten triangles, and, as usual, the Auslander-Reiten translation is denoted by $\tau$.

Let $m$ be a natural number. The $m$-cluster category of $H$ is the quotient category $\mathcal{C}_m(H):=\mathcal{D}^b(H)/ \tau^{-1} [m]$ which carries a natural  triangulated structure, see \cite{K05}. In \cite{Buan2006} the authors showed that the $m$-cluster category
$\C(H)$ is \textit{Calabi-Yau of CY-dimension $m+1$} (shortly $(m+1)$-CY), that is,  there is a bifunctorial  isomorphism  $\rm Hom_{\C(H)}$$(X,\tau Y[1])\cong D \rm Hom_{\C(H)}$$(Y,X)$.

 Following \cite{Thomas2007} we consider \textit{$m$-cluster tilting objects}  in $\mathcal{C}_m(H)$ defined as objects satisfying the following conditions:
\begin{enumerate}
 \item $\rm Hom_{\mathcal{C}_m(H)}$$ (T,X[i]) = 0$ for all $i\in\{1,2,\ldots,m\}$ if and only if $X\in \ts{add}\ T$,
 \item $\rm Hom_{\mathcal{C}_m(H)}$$ (X,T[i]) = 0$ for all $i\in\{1,2,\ldots,m\}$ if and only if $X\in \ts{add}\ T$.
\end{enumerate}

The endomorphism algebras of such objects are called \emph{$m$-cluster tilted algebras of type $Q$}. In case $m=1$, this definition specializes to that of a cluster tilted algebra, a class intensively studied since its definition in \cite{BMR06}.



In \cite{ABCP09} it has been shown that cluster tilted algebras are gentle if and only if they are of type  $\mathbb{A}$ or $\tilde{\mathbb{A}}$. Using arguments similar to those of \cite{ABCP09}, Murphy showed in \cite{Murphy2010} that $m$-cluster tilted algebras, with $m\geq 2$, are also gentle  and  that the only cycles that may exist  are cycles of length $(m+2)$, briefly called $(m+2)$-cycles. Each of these cycles has full relations, that is, the composition of any two arrows on the cycle is zero. Perhaps the most noticeable differences between cluster tilted and $m$-cluster tilted algebras is that the latter need not to be connected,  that the quiver does not determine the algebras and that  there is no uniqueness of type like we will show in section 7.\\

%
%
%
%

Throughout the rest of this paper,  $\C$ denotes the $m$-cluster category  $\mathcal{C}_m(H)$ with $H$ an hereditary algebra of type $\tilde{\mathbb{A}}$ and we fix the following numbering and orientation  for the quiver  $\widetilde{\mathbb{A}}$ (or  $\widetilde{\mathbb{A}}_{p,q}$):

$$  \xymatrix@R=.3cm@C=.5cm  {  & \scriptstyle{ 0} \ar[dr] \ar[dl]& \\
               \scriptstyle{1}\ar[d] & & \scriptstyle{p+1} \ar[d] \\
                \scriptstyle{2} \ar[d] && \scriptstyle{p+2} \ar[d] \\
                \scriptstyle{\vdots} \ar[d] && \scriptstyle{\vdots} \ar[d] \\
                  \scriptstyle{p-1} \ar[dr] &&  \scriptstyle{p+q-1} \ar[dl] \\
                  & \scriptstyle{p} &}$$

where $p,q\geq 1$.

 Recall that  the Auslander-Reiten  quiver of $\mathcal{C}_m$ contains $m$ transjective components $\mathcal{S}^d$, $m$ tubular components  $\mathcal{T}^d_p$ of rank $p$,  $m$ tubular components $\mathcal{T}^d_q$ of rank $q$,  where $d\in \{0, \cdots, m-1\}$,  and infinitely many tubular components of  rank $1$.
%

\subsection{A geometric realization of the $m$-cluster category of type $\tilde{\mathbb{A}}$.} We follow \cite{T12-arxiv}. Let $m \geq 1$ and  $p, q \geq2$ be integers. Let $P_{p,q,m}$ be a
regular $mp$-gon, with a regular $mq$-gon at its center, cutting a hole in the interior of
the outer polygon. Denote by $P_{p,q,m}^0$ the interior between the outer and inner polygon. Label the vertices on the outer polygon $O_0,O_1,...,O_{mp-1}$ in the counterclockwise direction, and label the vertices of the inner polygon $I_0,I_1,...,I_{mq-1}$, in the clockwise direction.

Let $\delta_{i,k}$   (or  $\gamma_{i,k}$ ) be the path in the counterclockwise (or clockwise, respectively) direction from $O_i$ ( or $I_i$) to $O_{i+k-1}$ (or $I_{i+k-1}$, respectively) along the border of the outer (or inner) polygon, where $k$ is the number of vertices that the path runs through (including the start and end vertex). 

%

We define two paths to be \emph{equivalent} if they start in the same vertex,
end in the same vertex and they are homotopic. We call these equivalence classes
diagonals in $P_{p,q,m}$. Let $O_{i,t}$ denote the diagonals homotopic to $\delta_{i,t}$, and let $I_{i,t}$ be the diagonals homotopic to $\gamma_{i,t}$.

An \emph{$m$-diagonal} in $P_{p,q,m}$  of type $1$ is  a diagonal  between $O_i$ and $I_j$ with $i$  congruent to $j$ modulo $m$. An $m$-diagonal  of type $2$  (or type $3$) is a diagonal of the form $O_{i,km+2}$ (or $I_{i,km+2}$, respectively), with $k \geq 1$ and  $i\in \{0,\cdots, pm-1\}$ ( or $i\in \{0,\cdots, qm-1\}$, respectively).

We say that a set of $m$-diagonals \emph{cross} if they intersect in the interior $P_{p,q,m}$. A set of non-crossing $m$-diagonals that divides $P_{p,q,m}$ into $(m+2)$-gons is called an $(m+2)$-angulation. Torkildsen \cite[Prop. 4.2]{T12-arxiv} shows that the number of $m$-diagonals in any  $(m+2)$-angulation of $P_{p,q,m}$ is exactly $p+q$.

\subsubsection{The  quiver corresponding to an $\m$-angulation}

For any $\m$-angulation $\D$ of $\P$, Torkildsen \cite{T12-arxiv} defines a corresponding coloured quiver $Q_{\D}$ with $p+q$ vertices in the following way. The vertices are the $m$-diagonals. There
is an arrow between $i$ and $j$ if the $m$-diagonals bound a common $\m$-gon. The
colour of the arrow is the number of edges forming the segment of the boundary of
the $\m$-gon which lies between $i$ and $j$, counterclockwise from $i$. This is the
same definition as in \cite{Buan2009} in the Dynkin $\mathbb{A}$ case, and it is easy to see that such a
quiver satisfies the conditions described there for coloured quivers.

Let $Q_{\D}^0$ be  the  subquiver of the coloured quiver  associated to the $\m$-angulation $\D$ given by the arrows of colour $0$. The idea of the present  article is to show that all   $m$-cluster tilted algebras of  type $\tilde{\mathbb{A}}$ are given by an $\m$-angulation $\D$ of the  polygon $\P$ in such a way that the ordinary quiver of the algebra is the quiver   $Q_{\D}^0$ and the relations are also determined by the  $\m$-angulation.  This description allows us to show easily that  $m$-cluster tilted algebras of type $\tilde{\mathbb{A}}$ are gentle.\\

\subsubsection{The category of $m$-diagonals } Let $\alpha$ be an $m$-diagonal in $P_{p,q,m}$ and $s$ a positive integer. We define $\alpha[s]$ to be the $m$-diagonal obtained by rotating the outer polygon $s$ steps clockwise and the inner polygon $s$ steps counterclockwise. Also we have the opposite operation that we denote by $[-s]$. We set $\tau=[m]$.

We define the \emph{category of $m$-diagonals} $\mathcal{C}^m_{p,q}$
as the $\K$-linear additive category having as indecomposable objects the $m$-diagonals,  so the objects are the finite sets of $m$-diagonals. The space of morphisms between the indecomposable objects   $X$ and $Y$ is the vector space over $\K$ spanned by the elementary moves  (morphisms defined in such a way that they correspond to irreducible morphisms in $\C$) modulo certain mesh relations. See \cite[Section 6]{T12-arxiv} for more details.

Let $\alpha$ be an $m$-diagonal. If $\alpha$ is of type 1, we say that
$\alpha$ has \textit{level}  $d$ (with $0 \leq d <m$) if we can reach any
$m$-diagonal between $O_d$ and $I_d$ from $\alpha$ by applying a finite sequence of $\tau$ and elementary moves. If $\alpha$ is of type  2 (or 3), we say that $\alpha$ has level
 $d$  if we can reach the $m$-diagonal $O_{d+1,m+2}$ (or $I_{d+1,m+2}$, respectively)
from $\alpha$ by applying a finite sequence of  $\tau$ and elementary moves.

Given  the category of $m$-diagonals $\mathcal{C}^m_{p,q}$ we define the \emph{Auslander-Reiten quiver of $\mathcal{C}^m_{p,q}$} in the following way. The vertices are the indecomposable objects (that is,  the $m$-diagonals), and there is an arrow from the indecomposable object $\alpha$ to the indecomposable object $\beta$ if and only if there is an elementary move $\alpha \rightarrow \beta$. 

Following the same notation as in  \cite{T12-arxiv}, we denote by  $\mathcal{T}_p^d$ (or $\mathcal{T}_q^d$), the tube of rank $p$ ( or $q$, respectively) of degree  $d$ in the  $m$-cluster category $\C$ and  by  $\mathcal{S}^d$ the component in the Auslander-Reiten  quiver of $\C$ consisting of  objects of the form $\tau^sP[d]$ where $P$ is   projective. Similarly let $T^d_p$ (or $T^d_q$) be the component in the Auslander-Reiten  quiver of $\mathcal{C}^m_{p,q}$ containing the objects of type 2 (or 3, respectively) in level $d$ and $S^d$ the component consisting of $m$-diagonals of type 1 in level $d$.

Now, we want to define an additive functor $F : \mathcal{C}^m_{p,q} \rightarrow
\mathcal{C}_m$ in such a way that $F$ induces a quiver isomorphism between the Auslander-Reiten quiver of $\mathcal{C}^m_{p,q}$ and a subquiver of the Auslander-Reiten  quiver of $\C$.

We start defining a particular  $\m$-angulation  $\D_P$
of $\P$. This $\m$-angulation is such that the image
$F(\D_P)=\bigoplus_{i=0}^{p+q-1}P_i$,  where $P_i$ is the projective corresponding to the vertex $i$ in the quiver. Thus,  $\D_P=\{\alpha_0, \cdots, \alpha_{p+q-1}\}$ as in the figure below.


\begin{figure}[H]
\begin{center}
\hspace*{0cm}\includegraphics[scale=1.2]{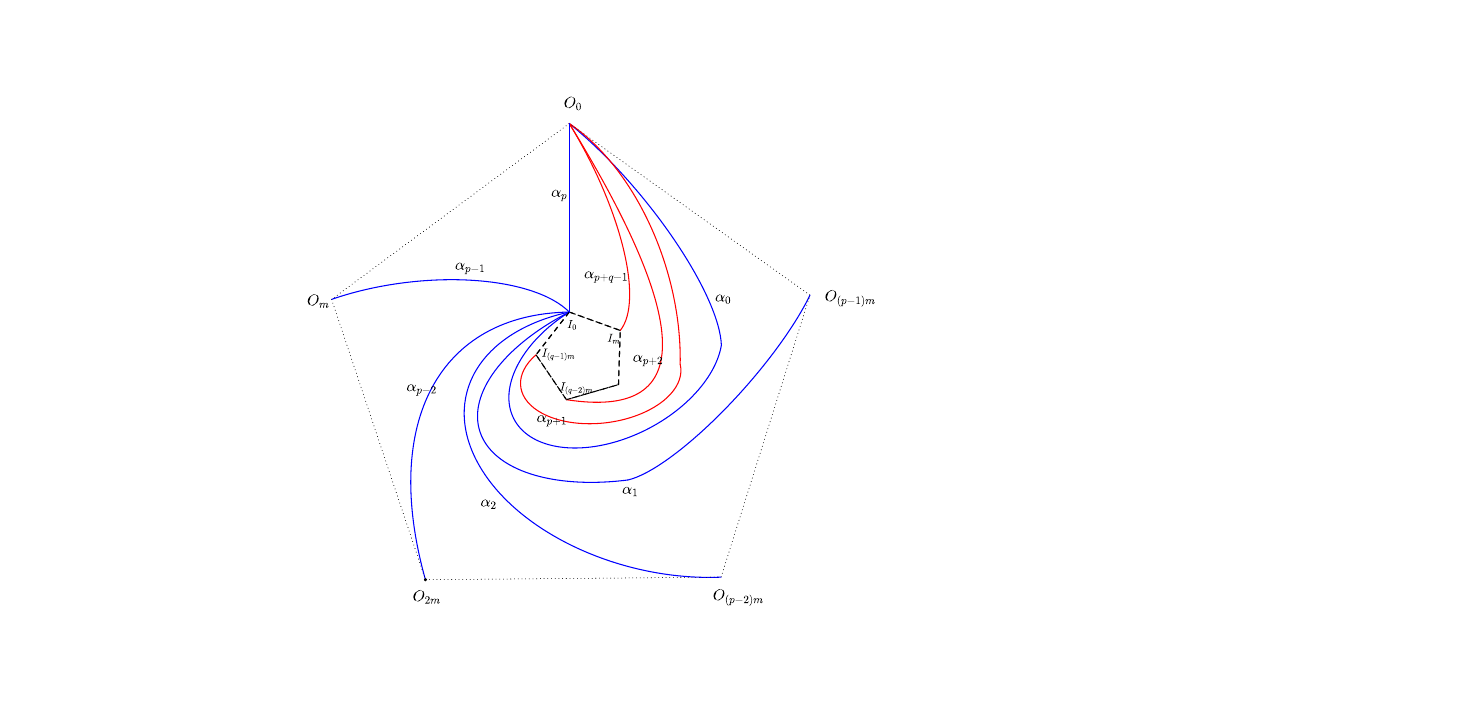}
\end{center}
\end{figure}

Defining $F(\tau^t \alpha_i[d])=\tau^t P_i[d]$ we get a bijection between the set of $m$-diagonals of type 1 and the set of indecomposable objects in the transjective components in the Auslander-Reiten quiver of $\C$.

 We denote the $p$  indecomposable objets on the mouth (the quasi-simples) of the tube $\mathcal{T}^d_p$ of rank $p$  by $M_i[d]$ (for $i\in \{0,\cdots, p-1\}$) where
$$M_0:=\begin{array}{c}  \scriptstyle{0} \\ \scriptstyle{p+1} \\ \scriptstyle{\vdots} \\ \scriptstyle{p} \end{array} \text{ \ \ \ \ \ \ \ and \ \ \ \ \ \ } M_k:=S_{p-k} \text { \ \  if \ \ } 1\leq k< p.$$

The other indecomposable objects that are in levels greater than 1 of the tube  are denoted by $Q_i^s[d]$  where  $s$ is the  quasi-length.  For a given $i$, a \emph{ray} is an infinite sequence of irreducible maps
 $$M_i[d] \rightarrow Q^2_i[d] \rightarrow Q^3_i[d] \rightarrow
Q^4_i[d] \rightarrow \hdots,$$

From now on, we also fix a notation for the diagonals of  type $2$ (or $3$) depending on the degree $d$ of the tube $T^d_p$ (or $T^d_q$, respectively) and the level $k$ that the diagonal has in the corresponding  tube. Then, the $p$  diagonals that are in the level  $k$ of the  tube $T^d_p$  are parametrized by $O_{mx-(d+1),km+2}$ with $x\in \{1,\cdots,p\}$ and $k\geq 1$.
Thus, we define $F(O_{mx-(d+1),m+2})=M_{x+1}[d]$ and for $k\geq 2$, $F(O_{mx-(d+1),km+2})= Q_{x+1}^{k}[d]$ (where we compute the sub-index always modulo $p$). Therefore we get a bijection between the set of $m$-diagonals of type 2 and the set of indecomposable objects in the tubular component $\mathcal{T}^d_p$ in the Auslander-Reiten quiver of $\C$.

We do similarly with the $m$-diagonals of type 3 and they correspond to objects in the tubes of rank $q$. Checking that elementary moves (arrows in the quiver of  $\mathcal{C}^m_{p,q}$ ) correspond to irreducible morphisms in $\C$ we have:

\begin{prop}\textnormal{\cite{T12-arxiv} }

\begin{enumerate}
\item The component $S^d$  in the Auslander-Reiten quiver of  $\mathcal{C}^m_{p,q}$ is isomorphic (via $F$) to the component $\mathcal{S}^d$ in the Auslander-Reiten quiver of the
$m$-cluster category $\mathcal{C}_m$ of type $\widetilde{\mathbb{A}}$,
for all integers $d$, with $0 \leq d < m$.
\item The components $T^d_p$ and $T^e_q$ of the quiver of   $\mathcal{C}^m_{p,q}$
are isomorphic (via $F$) to $\mathcal{T}^d_p$ and $\mathcal{T}^e_q$ respectively
in the  $m$-cluster category.

\end{enumerate}

\end{prop}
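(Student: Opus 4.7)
The plan is to verify the proposition by matching up, component by component, the combinatorial data on the side of $\mathcal{C}^m_{p,q}$ with the representation-theoretic data on the side of $\mathcal{C}_m$, and then invoking the standard fact that a bijection on vertices which is compatible with the AR structure on both sides is an isomorphism of AR quivers. Since $F$ is already defined on objects (by the prescription for type $1$, $2$, $3$ $m$-diagonals given above) and is additive, what really remains is to check (i) that the prescribed formulas are well-defined on each component (stable under $\tau$ and the shift), (ii) that they are bijections onto the stated AR components of $\mathcal{C}_m$, and (iii) that elementary moves are taken to irreducible maps.

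For part (1), I would start from the initial angulation $\D_P=\{\alpha_0,\dots,\alpha_{p+q-1}\}$, for which $F(\alpha_i)=P_i$ by design. The rule $F(\tau^t\alpha_i[d])=\tau^t P_i[d]$ therefore transports the whole $\tau$-and-shift orbit of $\D_P$ to the transjective part of $\mathcal{C}_m$. Fixing $d$, the level-$d$ type-$1$ diagonals are, by the definition of \emph{level}, exactly those reachable from the diagonals between $O_d$ and $I_d$ by $\tau$ and elementary moves; on the other side $\mathcal{S}^d$ is characterized as the $\tau$-orbit of the $P_i[d]$. A direct count (both sides are copies of $\mathbb{Z}\widetilde{\mathbb{A}}_{p,q}$) together with the observation that elementary moves at a given $\alpha_i$ encode precisely the arrows of $Q$ at the vertex $i$ yields both the bijection and the commutation $F\circ(\text{elementary move})=(\text{irreducible map})\circ F$ at the generating vertices. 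Propagating this by $\tau$ and $[1]$ gives the quiver isomorphism $S^d\cong \mathcal{S}^d$.

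For part (2), the argument is analogous but organized around the mouth of each tube. I would first verify that the $p$ type-$2$ diagonals $O_{mx-(d+1),m+2}$, $x\in\{1,\dots,p\}$, really lie on the mouth of $T^d_p$: increasing $k$ in $O_{mx-(d+1),km+2}$ produces a unique elementary move upward and no elementary move downward, so these play the role of the quasi-simples. The definition $F(O_{mx-(d+1),m+2})=M_{x+1}[d]$ matches this with the mouth of $\mathcal{T}^d_p$, and then for $k\geq 2$ the assignment $F(O_{mx-(d+1),km+2})=Q^k_{x+1}[d]$ transports the ray starting at $M_{x+1}[d]$. One checks that $\tau=[m]$ on the geometric side cyclically permutes the $p$ rays with period $p$, exactly as the AR translation in the tube $\mathcal{T}^d_p$ does. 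The same recipe applied to type-$3$ diagonals yields $T^e_q\cong\mathcal{T}^e_q$.

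The main obstacle, and the step that needs genuine attention, is (iii): verifying that elementary moves in $\mathcal{C}^m_{p,q}$ are in bijection with irreducible maps in $\mathcal{C}_m$ after $F$, because this is what glues the two combinatorial frameworks together. Concretely, one has to show that for every $m$-diagonal $\alpha$ the set of $m$-diagonals obtained from $\alpha$ by an elementary move has the same cardinality and the same ``position'' in the AR quiver as the set of irreducible maps out of $F(\alpha)$, and that mesh relations on the geometric side match the AR meshes on the algebraic side. This reduces to a local case analysis depending on whether $\alpha$ is of type $1$, $2$, or $3$, and on where $\alpha$ sits relative to the boundary; all cases are handled by the mesh-relation description in \cite[Section 6]{T12-arxiv}. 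Once this local matching is established for the $F$-images of $\D_P$ (which generate each component under $\tau$ and $[1]$) the global isomorphism of quivers follows by $\tau$- and shift-equivariance of $F$.
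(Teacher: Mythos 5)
Your proposal follows essentially the same route as the paper, which sets up the bijections on objects exactly as you do (transporting the initial angulation $\D_P$ by $\tau$ and shifts for the transjective part, and matching the mouth diagonals $O_{mx-(d+1),m+2}$ with the quasi-simples for the tubes) and then reduces the proposition to checking that elementary moves correspond to irreducible morphisms, deferring that local verification to Torkildsen's paper just as you defer it to the mesh-relation description there. Your sketch is a correct and somewhat more explicit account of the same argument.
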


\section{Some results about  $\m$-angulations}

\begin{lema}\label{las 3 diag no pueden ser de tipo 1}

 Let $d_i$, $d_j$ and $d_k$ be    $m$-diagonals which are part of the same   $\m$-gon in the $\m$-angulation $\D$. Assume that they are arranged in such  a way that  $d_i$ and $d_j$ share a vertex of $\P$, $d_j$ and $d_k$ share another vertex of  $\P$ but $d_i$, $d_j$ and $d_k$ do not have vertices in common (See figure below). Then $d_i$, $d_j$ and $d_k$  cannot be simultaneously  $m$-diagonals of type 1.

\end{lema}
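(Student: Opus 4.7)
My plan is to argue by contradiction via a cutting argument on the annulus. Suppose $d_i$, $d_j$, $d_k$ are all of type~1, and write $d_j = O_a I_b$ with $a \equiv b \pmod m$. Since no vertex is common to all three but each of the pairs $(d_i,d_j)$ and $(d_j,d_k)$ shares one, $d_i$ and $d_k$ must share different endpoints of $d_j$. I will treat the case $d_i = O_a I_c$ (sharing $O_a$) and $d_k = O_e I_b$ (sharing $I_b$), with $c\ne b$ and $e\ne a$; the opposite labeling is symmetric.

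The first step is to cut the annulus $P_{p,q,m}$ along the arc $d_j$, producing a topological disk $R$. The cut splits each of $O_a$ and $I_b$ into two vertices: writing $d_j^+,d_j^-$ for the two copies of $d_j$ appearing on $\partial R$, the corners of $R$ are $O_a^\pm$ and $I_b^\pm$. The top edge of $R$ is the outer polygon unrolled from $O_a^-$ to $O_a^+$ (through all other outer vertices), and the bottom edge is the inner polygon unrolled from $I_b^+$ to $I_b^-$ (through all other inner vertices).

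Next I will locate $d_i$ and $d_k$ in $R$. The $(m+2)$-gon $F$, of which $d_i,d_j,d_k$ are three consecutive sides, meets one copy of $d_j$ in $R$, say $d_j^+$. Since $d_i$ and $d_k$ do not cross $d_j$ in $\Delta$, they lift uniquely to simple arcs in $R$: $d_i$ runs from $O_a^+$ to $I_c$, and $d_k$ runs from $I_b^+$ to $O_e$. Because $c\ne b$, the endpoint $I_c$ lies strictly in the interior of the bottom edge of $R$ (not at a corner); similarly $O_e$ lies strictly in the interior of the top edge. Reading $\partial R$ counterclockwise starting from $I_b^+$, the four endpoints appear in the cyclic order $I_b^+, I_c, O_e, O_a^+$, so the endpoints of $d_i$ interleave with those of $d_k$. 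By the Jordan curve theorem in the disk $R$, two simple arcs with interleaving boundary endpoints must cross in the interior; hence $d_i$ and $d_k$ intersect inside $R$, contradicting the non-crossing property of the $(m+2)$-angulation.

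The one step requiring care is verifying the cyclic order of the four endpoints on $\partial R$; this depends on checking that $d_i$ and $d_k$ both meet $d_j^+$ (the copy adjacent to $F$), not $d_j^-$, which follows because $d_i, d_j, d_k$ are three consecutive sides of the same face. The rest is elementary planar topology; notably, the argument uses only that type 1 diagonals are outer-to-inner, and the stronger congruence conditions $c\equiv a$ and $e\equiv b\pmod m$ are not invoked beyond this.
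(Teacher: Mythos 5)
Your proof is correct, and it establishes the same underlying geometric fact as the paper -- that the third type-1 diagonal is forced to intersect the first -- but by a different mechanism. The paper argues directly in the annulus: it places $d_i \cap d_j$ on the outer boundary and $d_j \cap d_k$ on the inner one, then splits into the two possible windings of $d_k$, disposing of one by an intersection with $d_i$ read off the figure and of the other by observing that it would produce an arrow $d_k \to d_j$ rather than $d_j \to d_k$ in $Q^0_{\Delta}$. You instead cut the annulus along $d_j$ into a disk, use the fact that $d_i, d_j, d_k$ are consecutive sides of the same face to pin down that both arcs emanate from the copies $O_a^+$ and $I_b^+$ adjacent to that face, and then get the crossing in one stroke from endpoint interleaving and the Jordan curve theorem; the paper's ``wrong winding'' case is thereby absorbed into the choice of $d_j^+$ over $d_j^-$. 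What your version buys is rigor and independence from the figure and from the coloured-quiver arrow conventions, at the cost of being slightly longer; your closing remark that only the outer-to-inner property of type-1 diagonals is used (and not the congruence condition modulo $m$) is accurate and is equally true of the paper's argument.
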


\vspace*{-.9cm}
\begin{figure}[H]
\begin{center}
$$\includegraphics[scale=.08 ]{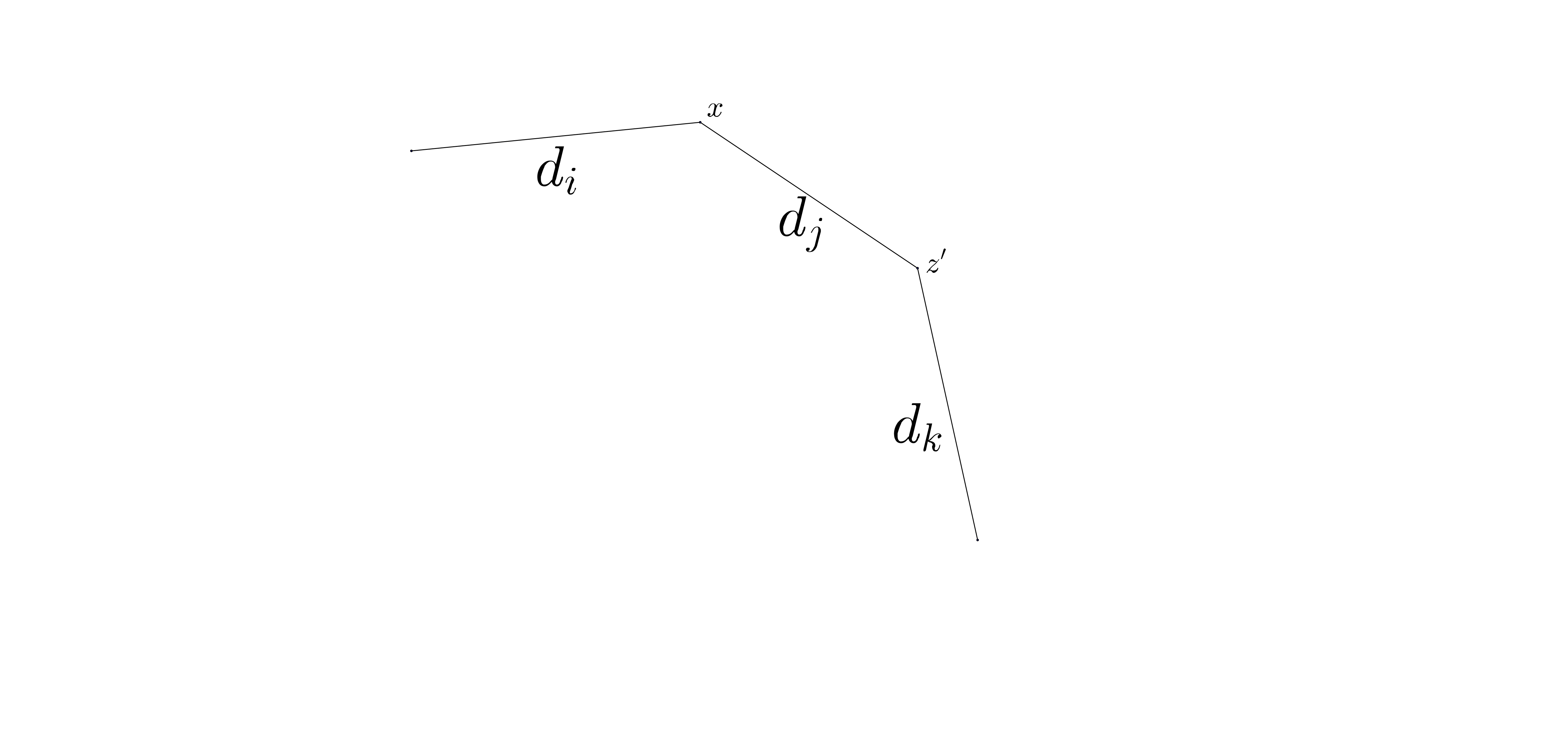}$$
\end{center}
\end{figure}

%

\begin{proof}
We assume $d_i\cap d_j=\{x\}$ with $x$ a vertex of the  outer $mp$-polygon. Since the three diagonals delimit the same  $\m$-gon, the diagonals $d_j$ and $d_k$ share a vertex that must not be   $x$. Then, $d_j\cap d_k=\{z'=t(d_j)\}$ with $z'$ in the inner  $mq$-polygon. Therefore $t(d_k)=z'$ and $s(d_k)=x'$  with  $x'\neq x$. We have two possibilities for  $d_k$  depending on the direction it takes.

\begin{figure}[H]
\begin{center}
\includegraphics[scale=0.3]{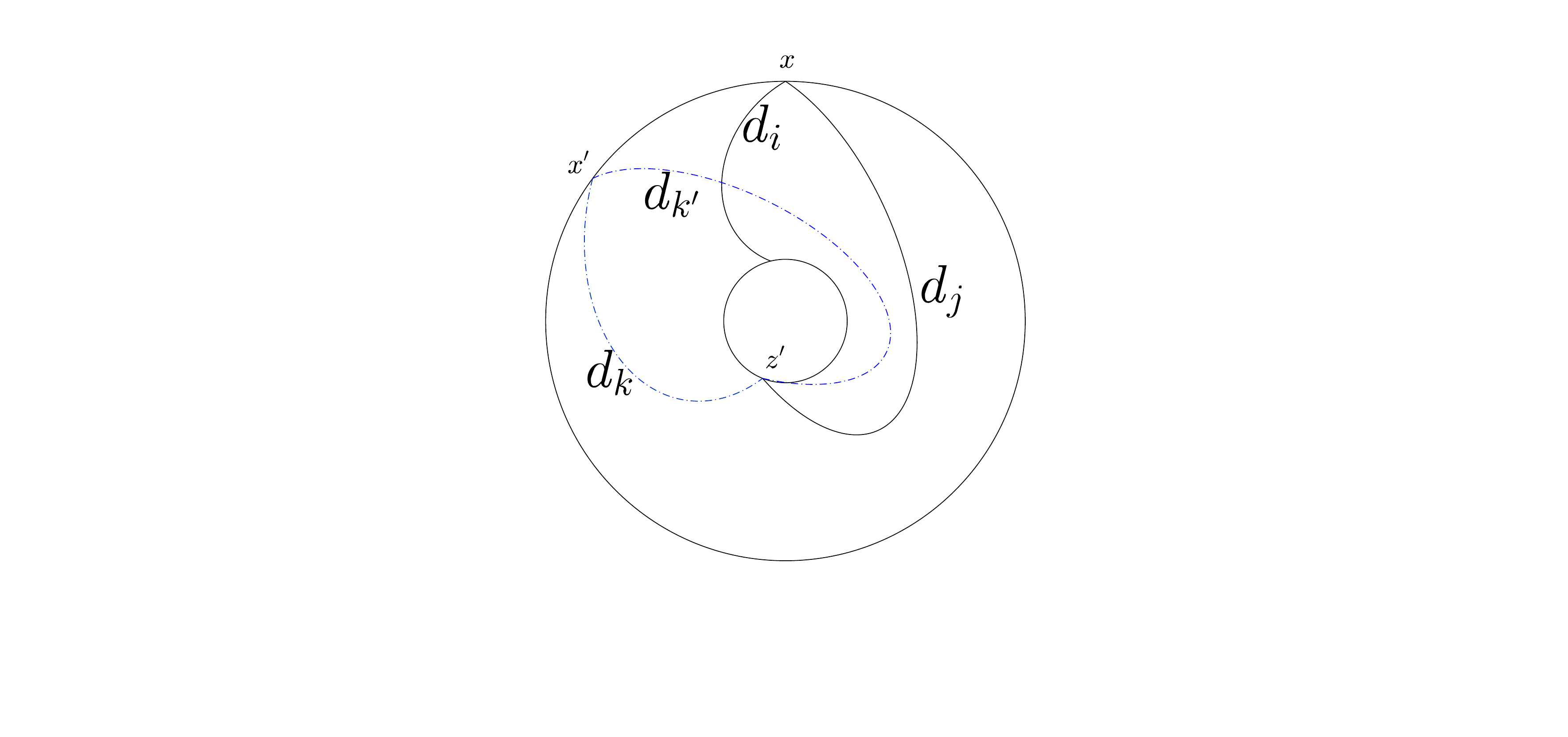}
\end{center}
\end{figure}

\vspace*{-1.5cm}
If $d_k$ is a diagonal between  $x'$ and $z'$ in the clockwise sense, it is clear that   $d_k\cap d_i \neq \emptyset$, which is impossible. If instead  $d_k$ is a  diagonal between $x'$ and $z'$ in the counterclockwise sense we will have an arrow $d_k\rightarrow d_j$ instead of an arrow $d_j\rightarrow d_k$ in  $Q_{\D}^0$. In consequence,  it is impossible for the three diagonals to be of type 1.

\end{proof}

\begin{lema}\label{final de una diagonal comienzo de la otra}
Let $d$ and $d'$  be   $m$-diagonals of type 2 (or 3). Assume that $d$ belongs to the component $T_p^{i}$  (or $T_q^{i}$ )  of the category of   $m$-diagonals $\mathcal{C}^m_{p,q}$. If the  source $s(d')$ of  $d'$ is equal to the target  $t(d)$ of $d$, then $d'$ belongs to the component  $T_p^{i-1}$ (or $T_q^{i-1}$, respectively).
\end{lema}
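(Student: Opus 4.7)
My plan is to prove this by a direct residue-class computation based on the explicit parametrization of type 2 (and type 3) diagonals given just before Proposition 1.1. The point is that the level $d$ of the tube $T_p^d$ is encoded in the residues modulo $m$ of the source and target indices of any type 2 diagonal in that component, so the hypothesis $s(d')=t(d)$ forces a specific shift of $d$.

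First I would recall the parametrization: a type 2 diagonal in level $k$ of $T_p^d$ is of the form $O_{mx-(d+1),\,km+2}$ for some $x\in\{1,\ldots,p\}$. Using the convention $s(O_{i,t})=O_i$ and $t(O_{i,t})=O_{i+t-1}$ (all indices mod $mp$), this gives
\begin{equation*}
s\bigl(O_{mx-(d+1),\,km+2}\bigr)=O_{mx-(d+1)},\qquad t\bigl(O_{mx-(d+1),\,km+2}\bigr)=O_{mx-d+km}.
\end{equation*}
So reducing the indices modulo $m$, every type 2 diagonal belonging to $T_p^d$ has source index $\equiv -(d+1)\pmod{m}$ and target index $\equiv -d\pmod{m}$. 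The point is that these two residues mod $m$ characterize the tube degree $d$ (as $d$ ranges over $\{0,\ldots,m-1\}$).

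Now suppose $d\in T_p^i$ and $d'$ is type 2 with $s(d')=t(d)$. On the one hand the shared vertex has index $\equiv -i\pmod{m}$ because it is $t(d)$. On the other hand, writing $d'\in T_p^{i'}$ for some $i'$, the vertex is $s(d')$, whose index is $\equiv -(i'+1)\pmod{m}$. Equating residues gives $i'\equiv i-1\pmod{m}$, i.e. $d'\in T_p^{i-1}$.

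The type 3 case is identical after replacing the outer polygon by the inner one: type 3 diagonals in level $k$ of $T_q^d$ are parametrized as $I_{my-(d+1),\,km+2}$, so the same modular bookkeeping applies with $O$ replaced by $I$ and $p$ by $q$. I do not anticipate a genuine obstacle here; the only care required is bookkeeping the convention for source/target of $O_{i,t}$ and $I_{i,t}$ consistently with the counterclockwise (respectively clockwise) labelling fixed in the preliminaries.
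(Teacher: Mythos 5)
Your proposal is correct and follows essentially the same route as the paper: both use the parametrization $O_{mx-(d+1),\,km+2}$ of the type~2 diagonals in $T_p^d$, compute $t(d)=m(x+k)-i$, and read off the degree of $d'$ from the residue of its source index modulo $m$. Your phrasing in terms of residue classes is just a mild repackaging of the paper's direct identification $d'=O_{m(x+k)-i,\,k'm+2}\in T_p^{i-1}$.
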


\begin{proof}
Since $d \in T_p^{i}$, $d=O_{mx-(i+1),km+2}$ with $x\in \{1,\cdots,p\}$ and $k\geq 1$.  Then, $s(d)=mx-(i+1)$ and $t(d)=mx-(i+1)+km+1=m(x+k)-i$. If $s(d')=t(d)$ we have  $d'=O_{m(x+k)-i,k'm+2} $ with $k'\geq1$. Therefore $d'\in T_p^{i-1}$ as claimed.
\end{proof}

An easy consequence of this lemma is the following corollary.

\begin{coro}
Let $\mathcal{C}$ be a component of the Aulander-Reiten quiver of the category $\mathcal{C}^m_{p,q}$ and let $d$ and $d'$ be two  $m$-diagonals of type 2 (or type 3) belonging to $\mathcal{C}$. Then the target of the diagonal  $d$ cannot be the source of the diagonal $d'$ reciprocally.
\end{coro}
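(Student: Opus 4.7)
The plan is to derive this corollary as an immediate consequence of Lemma~\ref{final de una diagonal comienzo de la otra}. I would argue by contradiction. Assume $d$ and $d'$ both lie in the same component $\mathcal{C}$ of the Auslander-Reiten quiver of $\mathcal{C}^m_{p,q}$, and suppose that $t(d)=s(d')$. Without loss of generality, treat the type~2 case; the type~3 case will be completely analogous, just replacing the tubes $T_p^i$ by $T_q^i$.

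Since $d$ is of type~2, it belongs to a unique tubular component $T_p^{i}$ for some $i\in\{0,\dots,m-1\}$, hence $\mathcal{C}=T_p^{i}$. Applying Lemma~\ref{final de una diagonal comienzo de la otra} to the pair $(d,d')$ with $s(d')=t(d)$ gives $d'\in T_p^{i-1}$ (indices taken modulo $m$). But by hypothesis $d'\in\mathcal{C}=T_p^{i}$, so $T_p^{i}=T_p^{i-1}$, contradicting the fact that, by the definition of level and by the last Proposition stated in the excerpt, the components $T_p^{0},T_p^{1},\dots,T_p^{m-1}$ are pairwise distinct connected components of the Auslander-Reiten quiver of $\mathcal{C}^m_{p,q}$. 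Therefore $t(d)\neq s(d')$.

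The reciprocal statement, namely $t(d')\neq s(d)$, follows by exchanging the roles of $d$ and $d'$ in the argument above and applying Lemma~\ref{final de una diagonal comienzo de la otra} once more. Combining the two yields the conclusion.

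I do not expect any substantial obstacle here: the corollary is essentially a restatement of the lemma together with the observation that the level $i$ determines the tube, so a shift $i\mapsto i-1$ forces a jump to a different connected component. The only point worth stating explicitly is the independence of the distinct tubes $T_p^{0},\dots,T_p^{m-1}$ (and similarly $T_q^{0},\dots,T_q^{m-1}$), which has already been established in the construction of $\mathcal{C}^m_{p,q}$ preceding the statement.
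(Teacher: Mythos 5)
Your proof is correct and is exactly the argument the paper intends: the corollary is presented as ``an easy consequence'' of Lemma~\ref{final de una diagonal comienzo de la otra} with no written proof, and your deduction --- apply the lemma to get $d'\in T_p^{i-1}$, contradicting $d'\in\mathcal{C}=T_p^{i}$ because the components of different degree are pairwise distinct --- is precisely that consequence spelled out, together with the symmetric application for the reciprocal direction. The only caveat worth noting is that the distinctness $T_p^{i}\neq T_p^{i-1}$ requires $m\geq 2$ (for $m=1$ the degrees coincide modulo $m$), which is the standing assumption in the parts of the paper where this corollary is used.
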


\section{The category  $\C$}

  In this section we expose some easy results about  morphisms in $\C$ and we fix some notation.


\begin{lema}\label{morfismos entre componente C y componente C[i] con i>1 son cero}

 Let  $H$ be an hereditary algebra and $m>2$. Then $\Ho_{\mathcal{C}_m(H)}(M,N[2])=0$ for all  $M,N\in  mod H$.
\end{lema}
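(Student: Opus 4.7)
My plan is to unwind the definition of $\C$ as an orbit category of $\mathcal{D}^b(H)$ and reduce the problem to a vanishing statement in the derived category, where the hereditariness of $H$ provides strong control. Since $\C = \mathcal{D}^b(H)/F$ with $F = \tau^{-1}[m]$, the standard formula for morphisms in an orbit category yields
$$\Ho_{\C}(M, N[2]) = \bigoplus_{i \in \mathbb{Z}} \Ho_{\mathcal{D}^b(H)}(M, \tau^{-i} N[mi+2]).$$
Because $H$ is hereditary, $\Ho_{\mathcal{D}^b(H)}(X, Y[k]) = \Ext^k_H(X, Y)$ vanishes unless $k \in \{0,1\}$ for modules $X, Y$. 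So it suffices to show that for every $i \in \mathbb{Z}$, no indecomposable summand of $\tau^{-i} N[mi+2]$ is concentrated in cohomological degree $0$ or $1$.

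The next step is to track how $\tau$ moves cohomological degrees in $\mathcal{D}^b(H)$. Since every indecomposable there has the form $X[n]$ with $X$ an indecomposable module, and $\tau$ commutes with $[1]$, the behaviour on modules determines everything: $\tau^{-1}$ keeps a non-injective module in degree $0$, and sends an injective $I$ to the corresponding projective $\nu^{-1}(I)$ placed in degree $1$. By an easy induction, $\tau^{-i} N$ for $i \geq 0$ is a direct sum of indecomposables $X_j[k_j]$ with $k_j \in \{0, 1, \ldots, i\}$, and dually $\tau^{-i} N$ for $i \leq 0$ has summands in degrees $\{i, i+1, \ldots, 0\}$.

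Combining the two steps, for $i \geq 1$ and $m \geq 3$ every summand of $\tau^{-i} N[mi+2]$ sits in a degree in $\{mi+2, \ldots, i(m+1)+2\}$, all of which are at least $5$ and hence outside $\{0,1\}$; for $i \leq -1$ a symmetric analysis places the summands in strictly negative degrees; and for $i = 0$ the contribution is $\Ext^2_H(M, N) = 0$ by hereditariness. Every term therefore vanishes, proving the lemma. The role of the hypothesis $m > 2$ is exactly to force the shift $mi + 2$ out of the narrow window $\{0,1\}$ allowed by hereditariness whenever $i \neq 0$; the only real bookkeeping hurdle will be cleanly justifying the range of cohomological degrees produced by iterating $\tau^{\pm 1}$, which is where care with injective and projective summands is needed.
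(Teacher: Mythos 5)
Your proof is correct and follows essentially the same route as the paper: expand $\Ho_{\C}(M,N[2])$ as the orbit-category sum $\bigoplus_{i}\Ho_{\mathcal{D}^b(H)}(M,\tau^{-i}N[im+2])$ and kill each term using that $H$ is hereditary and that $m>2$ forces $im+2$ outside the window $\{0,1\}$ for $i\neq 0$. You are in fact slightly more careful than the paper, which asserts that $\tau^{-i}N$ lies in $(\mathrm{mod}\,H)[0]$ --- not literally true when injective or projective summands occur along the $\tau$-orbit --- whereas your bookkeeping of the cohomological degrees produced by iterating $\tau^{\pm1}$ closes that small gap without changing the argument.
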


\begin{proof} By definition,

\begin{eqnarray*}
\Ho_{\mathcal{C}_m(H)}(M,N[2])  =  \bigoplus_{i\in \mathbb{Z}} \Ho_{D^b(H)}(M,F^iN[2]) = \bigoplus_{i\in \mathbb{Z}} \Ho_{D^b(H)}(M,\tau^{-i}N[im+2]).
\end{eqnarray*}

All the terms $\Ho_{D^b(H)}(M,\tau^{-i}N[im+2])$ with $im+2<0$ are zero because the objects $M$ and  $\tau^{-i}N$ belong to  $(modH)[0]$. 
In addition, all the terms $\Ho_{D^b(H)}(M,\tau^{-i}N[im+2])$ with $im+2\geq2$ are zero for  $H$ hereditary. To conclude, since $m>2$, we cannot have $im+2=0$ or $im+2=1$ and hence $\Ho_{\mathcal{C}_m(H)}(M,N[2]) =0$.
\end{proof}


\begin{lema}\label{morfismos entre T_p y T_q[1]}
Let $X$ be an object in the tubular component $\mathcal{T}^x_p$ and $Y$  an object in the tubular component $\mathcal{T}^x_q$, with $p\neq q$. Then $\Hom(X,Y[1])=0$.
\end{lema}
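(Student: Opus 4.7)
The plan mirrors the computation used in Lemma~3.1: I would expand the $m$-cluster category Hom as a direct sum over the orbit of the autoequivalence $\tau^{-1}[m]$, namely
\begin{equation*}
\Hom(X,Y[1]) \;=\; \bigoplus_{i\in\mathbb{Z}} \Ho_{\mathcal{D}^b(H)}\!\bigl(X,\tau^{-i}Y[im+1]\bigr).
\end{equation*}
Since the shift $[-x]$ is an autoequivalence of $\C$ that carries $\mathcal{T}^x_p$ to $\mathcal{T}^0_p$ and $\mathcal{T}^x_q$ to $\mathcal{T}^0_q$, I would first reduce to the case $x=0$, so that $X$ and $Y$ may be taken as regular $H$-modules lying respectively in the rank-$p$ and rank-$q$ tube of $\mathrm{mod}\,H$, with $p\neq q$.

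Next I would exploit the hereditarity of $H$: for $U,V\in\mathrm{mod}\,H$ one has $\Ho_{\mathcal{D}^b(H)}(U,V[j])=0$ unless $j\in\{0,1\}$. Applied summand-wise, this forces $im+1\in\{0,1\}$. For $m\geq 2$ the only surviving term is $i=0$, contributing $\Ext^1_H(X,Y)$; in the degenerate case $m=1$ the index $i=-1$ also survives and contributes $\Ho_H(X,\tau Y)$.

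To conclude I would invoke the classical orthogonality of tubes for a tame hereditary algebra: the regular part of $\mathrm{mod}\,H$ decomposes as a family of pairwise orthogonal standard tubes, so $\Ho_H(U,V)=\Ext^1_H(U,V)=0$ whenever $U$ and $V$ lie in distinct tubes. Since $\tau$ stabilises each tube, both $Y$ and $\tau Y$ sit in the rank-$q$ tube, which is distinct from the rank-$p$ tube containing $X$; hence every surviving summand vanishes and $\Hom(X,Y[1])=0$.

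The main obstacle I foresee is purely bookkeeping: one has to be careful that the reduction by $[-x]$ genuinely produces lifts in $\mathrm{mod}\,H$ (rather than in some shift), and one must not overlook the extra summand at $i=-1$ when $m=1$. The orthogonality input and the hereditary vanishing are completely standard, so once the orbit sum is pinned down correctly the argument is immediate.
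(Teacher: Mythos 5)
Your argument is correct, but it follows a different route from the paper. The paper's proof is a two-line application of the $(m+1)$-Calabi--Yau property: $\Hom(X,Y[1])\cong D\Hom(\tau^{-1}Y,X)\cong D\Hom(Y,\tau X)$, which vanishes because $Y$ and $\tau X$ lie in tubes of different ranks. You instead unwind the orbit-category Hom directly, $\Hom(X,Y[1])=\bigoplus_{i}\Ho_{\mathcal{D}^b(H)}(X,\tau^{-i}Y[im+1])$, kill all but the summands with $im+1\in\{0,1\}$ using that $H$ is hereditary, and dispose of the survivors ($\Ext^1_H(X,Y)$, plus $\Ho_H(X,\tau Y)$ when $m=1$) by the orthogonality of the tubes in $\mathrm{mod}\,H$. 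Both proofs ultimately rest on tube orthogonality; the paper's version buys brevity from Serre duality, but its final step ("no morphisms between different tubes") is itself a Hom in $\C$ whose vanishing implicitly requires exactly the orbit-sum unpacking you carry out. Your version is therefore more self-contained and makes the hereditary vanishing and the $m=1$ edge case explicit, at the cost of the bookkeeping you acknowledge; the reduction to $x=0$ via the autoequivalence $[-x]$ is legitimate and causes no difficulty.
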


\begin{proof}
Using the  $m+1$-Calabi-Yau property of   $\C$ we get
\begin{eqnarray*}
\Hom(X,Y[1]) \cong   D\Hom(\tau^{-1}Y,X)  \cong  D\Hom(Y,\tau X)
= 0
\end{eqnarray*}


which is zero because there are no morphisms between elements in different tubes.
\end{proof}

\section{The $m$-cluster tilting objets in $\C$  }

\begin{defi}
Let   $\mathcal{C}$  be a  component of the  Auslander-Reiten quiver of  $\C$ and   $M$ an object in  $\mathcal{C}$. Then we define the following subsets:

$$M^r=\{N\in \mathcal{C} | \ \ \Hom(M,N)\neq
0 \}$$

$$M^l=\{N\in \mathcal{C} |  \ \  \Hom(N,M)\neq
0 \}$$

Given an object $M$ in  $\mathcal{C}$  the set  $M^r \setminus (\tau^{-1}M)^r$ determine two rays starting in  $M$. By convention we call $M^{ru}$ the ray  which is located above and $M^{dr}$ the ray  which is  below.

In the same way, the set $M^l \setminus (\tau M)^l$ determine two rays   ending in $M$. By convention we call $M^{lu}$ the ray  which is located above and $M^{dl}$ the ray  which is  below.\\
\end{defi}

\begin{convencion}
Let $P_p$ be the projective associated to the vertex $p$. We adopt the convention that $P_p^{dr}$ is the ray

  $$P_p\rightarrow P_{p-1}\rightarrow \cdots \rightarrow P_1 \rightarrow P_0$$

 and that  $P_p^{ur}$ is the ray

   $$P_p\rightarrow
P_{p+q-1}\rightarrow \cdots \rightarrow P_{p+1} \rightarrow P_0$$

\end{convencion}

\begin{obs}
\textnormal{
Observe that in the component of  $\C$ isomorphic  to $\mathcal{S}^0$ the
parallel rays to $P_p\rightarrow P_{p-1}\rightarrow \cdots
\rightarrow P_1 \rightarrow P_0$  are in  bijection (via  $F^{-1}$) with the diagonals that  share a vertex in the inner polygon. }\

\textnormal{  In the same way, the
parallel rays to $P_p\rightarrow P_{p+q-1}\rightarrow \cdots \rightarrow P_{p+1} \rightarrow P_0$ are in  bijection (via  $F^{-1}$)  with the diagonals that  share a vertex in the outer polygon.}
\end{obs}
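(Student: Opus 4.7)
The plan is to reduce the statement to a combinatorial claim about elementary moves on $m$-diagonals, by transferring everything through $F^{-1}$ to the geometric category $\mathcal{C}^m_{p,q}$, and then arguing directly in $\P$.

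First, I would invoke the Proposition of Torkildsen quoted just above: $F$ induces a quiver isomorphism between $S^{0}$ and $\mathcal{S}^{0}$. Under this isomorphism, arrows of the Auslander--Reiten quiver correspond to elementary moves of $m$-diagonals, and the projective $P_i$ corresponds to the diagonal $\alpha_{i}$ of the reference $\m$-angulation $\D_{P}$ fixed in the preliminaries. Hence the ray $P_p\to P_{p-1}\to\cdots\to P_1\to P_0$ corresponds, via $F^{-1}$, to a chain of elementary moves
\[
\alpha_p\to\alpha_{p-1}\to\cdots\to\alpha_1\to\alpha_0.
\]

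Next, I would read off from the figure defining $\D_{P}$ the explicit endpoints of each $\alpha_i$ for $0\le i\le p$. These diagonals are all of type $1$ at level $0$, and the picture shows that they share a common vertex on the inner $mq$-gon, while the other endpoint $\alpha_i$ sits one step (counterclockwise) along the outer $mp$-gon from that of $\alpha_{i-1}$. Consequently each elementary move $\alpha_i\to\alpha_{i-1}$ is precisely a rotation of the outer endpoint, with the inner endpoint held fixed. This identifies the direction $P_p^{dr}$ with the ``fix the inner endpoint'' family of moves; the symmetric analysis, applied to $\alpha_p,\alpha_{p+q-1},\ldots,\alpha_{p+1},\alpha_0$, identifies $P_p^{ur}$ with the ``fix the outer endpoint'' family.

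Now a ray parallel to $P_p^{dr}$ in the transjective component $\mathcal{S}^{0}$ corresponds, via $F^{-1}$, to a maximal chain of $m$-diagonals of type $1$ at level $0$ all obtained from one another by rotating the outer endpoint while keeping their inner endpoint fixed; therefore every diagonal in such a ray shares that one vertex on the inner $mq$-gon. Conversely, every vertex on the inner polygon that can be the endpoint of a level-$0$ type-$1$ diagonal (equivalently, each inner-polygon vertex whose index is $\equiv 0\pmod m$) determines such a parallel ray. The assignment sending a parallel ray to the family of diagonals it contains is then a bijection between parallel rays and the $m$-diagonals of $\mathcal{C}^m_{p,q}$ sharing a vertex on the inner polygon, which proves the first claim. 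The second claim, for $P_p^{ur}$ and the outer polygon, is proven by the mirror-image argument, swapping the roles of the two polygons.

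The only point that demands actual care is matching the orientation of the figure of $\D_{P}$ to the orientation of the quiver of $\widetilde{\mathbb{A}}_{p,q}$ fixed in the preliminaries, so that it is indeed the ``down-right'' direction $P_p^{dr}$, and not the ``up-right'' direction, which corresponds to fixing an inner endpoint. Once this is verified from the picture, everything else is just the observation that elementary moves come in two perpendicular families (one per choice of endpoint to rotate), together with the fact that $F$ is an isomorphism of Auslander--Reiten quivers on the transjective component in question.
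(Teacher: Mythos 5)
Your argument is correct and is essentially the justification the paper has in mind: the remark is stated without proof, but the lemma on $d_{x\curvearrowright}$ in Section 5 performs exactly your computation for the outer-polygon half (reading the endpoints of the $\alpha_i$ off the fixed $\m$-angulation $\D_P$ and transporting the resulting families through the quiver isomorphism $F$), and your inner-polygon half is the mirror image of that. The orientation check you single out is indeed the only delicate point, and your assignment is consistent with the paper's Convention and with the fact that $F(d_{0\curvearrowright})$ contains $P_0,P_p,P_{p+1},\dots,P_{p+q-1}$, i.e.\ the ray $P_p^{ur}$.
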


In the following lemmas we are going to see the relationship between two indecomposables summands  of an  $m$-cluster tilting object  $T$ if both are in the same component.\\

\begin{lema}\label{componente transyectiva}
Let $T$ be  an  $m$-cluster tilting object and let  $M,N$ be two  objects in a transjective component. Then,

\begin{enumerate}
  \item If $M\in add(T)$, then $\tau^{-i}M \notin add(T)$ for all $i\in \mathbb{N}^*$.
  \item If $M,N\in add(T)$ and there is a nonzero morphism $M\rightarrow N$, then $N\in M^{ur} \cup M^{dr}$.
\end{enumerate}
\end{lema}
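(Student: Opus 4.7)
My plan splits into a case analysis on $i$ for part (1) and a single Calabi--Yau computation for part (2).

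For part (1), I would first treat $i=1$ algebraically. The Auslander--Reiten triangle
\[
\tau M \to E \to M \xrightarrow{\partial} \tau M[1]
\]
in $\C$ has nonzero connecting morphism $\partial$, so $\Hom(M,\tau M[1])\neq 0$; shifting by $\tau^{-1}$ gives $\Hom(\tau^{-1}M,M[1])\neq 0$. If both $M$ and $\tau^{-1}M$ were in $\ts{add}\,T$, this would contradict the defining vanishing $\Hom(T,T[1])=0$ of an $m$-cluster tilting object. For $i\geq 2$, this simple algebraic argument no longer suffices (one can check that in general $\Hom(M,\tau^{-1}M)=0$ is possible in a transjective component, so the straightforward Calabi--Yau iteration breaks down), and I would instead pass to the geometric realization via the functor $F:\mathcal{C}^m_{p,q}\to\C$. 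Since $M$ is transjective, $M=F(d)$ for some $m$-diagonal $d$ of type $1$, and $\tau^{-i}M=F(d[im])$. The aim is to show that $d$ and $d[im]$ always cross in $\P$ for every $i\geq 1$, so they cannot both belong to the non-crossing $(m+2)$-angulation determined by $T$. The crossing is forced because the shift $[im]$ rotates the outer polygon and the inner polygon in opposite senses by $im$ vertices, producing an unavoidable relative twist between $d$ and $d[im]$ in the annular region $\P^0$.

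Part (2) is self-contained and is a clean application of $(m+1)$-Calabi--Yau duality. Assume for contradiction that $N\notin M^{ur}\cup M^{dr}$. By the very definition of these two rays, $N$ must then lie in $M^r\cap(\tau^{-1}M)^r$, hence $\Hom(\tau^{-1}M,N)\neq 0$, equivalently $\Hom(M,\tau N)\neq 0$. Using that $\tau=[m]$ in $\C$, the Calabi--Yau isomorphism yields
\[
\Hom(M,\tau N)\;\cong\;D\Hom(\tau N,\,M[m+1])\;=\;D\Hom(N,\,\tau^{-1}M[m+1])\;=\;D\Hom(N,\,M[1]),
\]
so $\Hom(N,M[1])\neq 0$, which contradicts $\Hom(T,T[1])=0$ because both $M$ and $N$ lie in $\ts{add}\,T$.

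The main obstacle is the geometric step in part (1) for $i\geq 2$: since $\P$ is an annulus rather than a disc, arcs between the two boundary components admit several non-crossing configurations, so forcing a crossing between $d$ and $d[im]$ requires a careful analysis, for instance by lifting to the universal cover of $\P^0$ and tracking the winding numbers produced by the opposite rotations of the outer and inner polygons. By contrast, part (2) is essentially a formal consequence of $(m+1)$-Calabi--Yau together with the combinatorial description of the boundary of the forward cone $M^r$, and I do not expect any real difficulty there.
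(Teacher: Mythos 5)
Your part (2) is correct, and it is essentially the paper's argument in a slightly cleaner form: since $M^{ur}\cup M^{dr}$ is by definition $M^r\setminus(\tau^{-1}M)^r$, the failure of the conclusion forces $\Hom(\tau^{-1}M,N)\neq 0$, which (either via your Serre-duality chain giving $\Hom(N,M[1])\neq 0$, or simply by rewriting it as $\Hom(M,N[m])\neq 0$ using $\tau=[m]$) contradicts the $m$-rigidity of $T$. The paper instead writes $N=\tau^{-i}N'$ with $N'$ on one of the two rays and computes $\Hom(M,\tau^{-i}N'[m])\cong\Hom(M,\tau^{-(i-1)}N')\neq 0$; the two routes are interchangeable.

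The problem is part (1) for $i\geq 2$, where your proposal has a genuine gap. The reason you give for abandoning the algebraic approach --- that $\Hom(M,\tau^{-1}M)=0$ is possible in a transjective component --- is false in this setting. Every object of $\mathcal{S}^d$ has the form $\tau^sP_a[d]$ with $P_a$ projective, and since $\tau$ and $[1]$ are autoequivalences of $\C$, $\Hom(M,\tau^{-j}M)\cong\Hom(P_a,\tau^{-j}P_a)$, which contains the ordinary module morphism space $\Ho_H(P_a,\tau^{-j}P_a)\cong e_a(\tau^{-j}P_a)\neq 0$: for a connected representation-infinite hereditary algebra one always has $\Ho_H(X,\tau^{-j}X)\neq 0$ for $X$ indecomposable postprojective and $j\geq 0$ (vanishing can only occur in Dynkin type). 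This is exactly what the paper exploits: $\Hom(M,\tau^{-i}M[m])\cong\Hom(M,\tau^{-(i-1)}M)\neq 0$ for every $i\in\mathbb{N}^*$, so $\tau^{-i}M\notin add(T)$, uniformly in $i$ (your AR-triangle argument for $i=1$ is just the case where the right-hand side is $\Hom(M,M)$). Having discarded this one-line computation, you fall back on a geometric claim --- that the $m$-diagonals $d$ and $d[im]$ always cross in $\P$ --- which you do not prove but explicitly flag as ``the main obstacle'' requiring a winding-number analysis in the universal cover. That claim is where all the content of statement (1) would reside, so as written the proof of (1) is incomplete for $i\geq 2$; you should either carry out the crossing argument or, better, reinstate the algebraic computation above.
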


\begin{proof}
(1) In the  $m$-cluster category $[m]=\tau$ and consequently, for all
 $i\in \mathbb{N}^*$ we have: $$\Hom(M,\tau^{-i}M[m])\cong \Hom(M,\tau^{-(i-1)}M)\neq 0.$$
Thus,  $\tau^{-i}M\notin add(T)$.\\
(2) Since there is a nonzero  morphism $M\rightarrow N$, we know that
 $N\in M^r$. Suppose that $N\notin M^{ur}\cup M^{dr}$. Then, $N=\tau^{-i}N'$ with
  $N'\in M^{ur}\cup M^{dr}$ or $N=\tau^{-i}M$, $i\in \mathbb{N}^*$. By 1,  we cannot have $N=\tau^{-i}M$. Thus, $N=\tau^{-i}N'$ with $N'\in M^{ur}\cup M^{dr}$ and
$$\Hom(M,\tau^{-i}N'[m])\cong \Hom(M,\tau^{-(i-1)}N')\neq 0.$$
In consequence  $N=\tau^{-i}N'\notin add(T)$, which contradicts our assumption.
\end{proof}


\begin{defi}
Let   $\mathcal{C}$  be a  tubular component of the  Auslander-Reiten quiver of  $\C$ and   $\mathcal{N}$ a subset of  $\mathcal{C}$. For any $r\in \mathbb{N}^*$, we define the  subset $\mathcal{N}_{\leq  r}$ by:

$$\mathcal{N}_{\leq  r}=\{N\in \mathcal{N} | \textnormal{  $N$  belongs to the first $r$ levels of the tube} \}$$

\end{defi}

\begin{lema}
Let $T$ be an  $m$-cluster tilting object  and let  $M\in add(T)$ be an object in the tubular component of rank $p$.  Then,  $M$ belongs to the first  $p-1$  levels of the tube.
\end{lema}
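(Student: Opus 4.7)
I would argue by contradiction. Suppose $M\in\ts{add}(T)$ lies in a tubular component $\mathcal{T}^d_p$ of rank $p$ with quasi-length $\ell\geq p$. The goal is to produce a non-zero morphism $M\to M[m]$ in $\C$; this contradicts the requirement, coming from the $m$-cluster tilting property applied with $X=M$, that $\Hom(M,M[i])=0$ for all $i\in\{1,\ldots,m\}$.

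Using the identification $[m]=\tau$ valid in $\C$, the space $\Hom(M,M[m])$ coincides with $\Hom(M,\tau M)$. Since tubes of the same rank are interchanged by the shift functor and shifts are autoequivalences of $\C$, I may assume $d=0$, so both $M$ and $\tau M$ lie in $\mathrm{mod}\,H$. Applying Keller's formula,
\[
\Hom(M,\tau M)\;=\;\bigoplus_{i\in\mathbb{Z}}\Ho_{D^b(H)}\bigl(M,\tau^{1-i}M[im]\bigr).
\]
Arguing as in Lemma~\ref{morfismos entre componente C y componente C[i] con i>1 son cero}, the summands with $i\geq 1$ compute $\Ext^{im}$ in the hereditary category, which vanishes for $m\geq 2$; the summands with $i\leq -1$ vanish because $\tau^{1-i}M[im]$ sits in a strictly negative cohomological degree relative to the module $M$. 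Only the term $i=0$ survives, giving $\Hom(M,\tau M)\cong \Ho_H(M,\tau M)$. The marginal case $m=1$ only produces one additional non-zero summand $\Ext^1_H(M,M)$, which by Auslander--Reiten duality equals $D\,\Ho_H(M,\tau M)$ and causes no trouble.

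It then suffices to show $\Ho_H(M,\tau M)\neq 0$ whenever $M$ has quasi-length $\ell\geq p$ in a rank-$p$ tube. Writing $M=Q_i^\ell$ one has $\tau M=Q_{i-1}^\ell$, with indices read modulo $p$ on the mouth. I would pass to the universal cover of the tube, a straight $\mathbb{A}_\infty$-quiver whose indecomposables are labeled by intervals $[a,b]$ and which satisfy the Hom rule $\Ho([a,b],[c,d])\neq 0\iff a\leq c\leq b\leq d$. There $M$ lifts to $[i,i+\ell-1]$, while $\tau M$ admits the lift $[i+p-1,i+p+\ell-2]$ corresponding to the identification $[a,b]\sim[a+p,b+p]$ on the cover. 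The Hom rule reduces to the inequality $p\leq\ell$, which holds by hypothesis. Hence $\Ho_H(M,\tau M)\neq 0$, contradicting the $m$-cluster tilting hypothesis and forcing $\ell\leq p-1$. The main subtlety lies in justifying the tube Hom computation; it can equivalently be packaged via Auslander--Reiten duality as $\Ho_H(M,\tau M)\cong D\,\Ext^1_H(M,M)\neq 0$, reflecting the standard fact that an indecomposable module above level $p-1$ of a rank-$p$ tube carries a non-trivial self-extension.
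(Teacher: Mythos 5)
Your proof is correct, but it takes a genuinely different route from the paper. The paper's argument is purely combinatorial and one line long: under the bijection $F$, an object of level $k$ in the tube of rank $p$ corresponds to a type-2 diagonal $O_{*,km+2}$, and if $k\geq p$ then $km+2\geq pm+2$ exceeds the number of vertices of the outer $mp$-gon, so the diagonal self-intersects and cannot occur in any $(m+2)$-angulation. You instead argue homologically inside $\mathcal{C}_m$: the rigidity condition $\Hom(M,M[m])=0$ for a summand of $T$, combined with $[m]=\tau$ and the orbit-category decomposition of Hom-spaces, reduces the claim to the classical fact that an indecomposable of quasi-length $\geq p$ in a rank-$p$ stable tube satisfies $\Hom_H(M,\tau M)\cong D\,\Ext^1_H(M,M)\neq 0$. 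Your bookkeeping of the summands (vanishing for $i\leq -1$ by degree reasons, for $i\geq 1$ by heredity when $m\geq 2$, and the harmless extra $\Ext^1$ term when $m=1$) is sound. What each approach buys: the paper's proof is essentially free once the geometric model is in place, but is tied to type $\widetilde{\mathbb{A}}$; yours is independent of the model and shows the statement for tubes in the $m$-cluster category of any hereditary algebra of tame type, at the cost of invoking the standard structure theory of stable tubes. Both are acceptable; in the context of this paper the geometric argument is the more economical one, but your version could serve as a model-free sanity check.
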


\begin{proof}
Observe that if $d_M$ is a  diagonal of type $2$ such that $F(d_M)=M$ then  $d_M$ has the form $O_{*,km+2}$,  where $k$ is the level of the  diagonal in the tube  $T_p$. If $k\geq p$, $km+2\geq pm+2$.   Then, the diagonal has a self-intersection and cannot be part of an  $\m$-angulation. In consequence $M$ cannot be  an  $m$-cluster tilting object summand.
\end{proof}

\begin{lema}\label{sumandos de T en un tubo}
Let $T$ be  an  $m$-cluster tilting object  and let   $M,N$ be two  objects in a  tubular component of rank $p$. If $M,N\in add(T)$ and there is a nonzero  morphism  $M\rightarrow N$, then  $N\in (M^{ur} \cup M^{dr})_{\leq p-1}$.

\end{lema}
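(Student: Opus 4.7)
The plan is to split the conclusion into its two conjuncts and handle them separately. The level bound $N\in(\cdot)_{\leq p-1}$ comes for free: since $N$ is an indecomposable summand of the $m$-cluster tilting object $T$ lying in a tubular component of rank $p$, the preceding (unlabelled) lemma forces $N$ into the first $p-1$ levels of the tube. So only the ray assertion $N\in M^{ur}\cup M^{dr}$ requires real work.

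For the ray assertion I would mimic the template of Lemma \ref{componente transyectiva}(2). Since $\Hom(M,N)\neq 0$, we have $N\in M^r$. Inside a tube, $M^r$ is a forward cone whose two boundary rays are exactly $M^{ur}$ and $M^{dr}$ and whose interior is generated by $\tau^{-i}$-translates of objects on those two rays, together with the central line $\{\tau^{-i}M\}_{i\geq 0}$. So, arguing by contradiction, if $N\notin M^{ur}\cup M^{dr}$ there exists $i\geq 1$ with either $N=\tau^{-i}M$ or $N=\tau^{-i}N'$ for some $N'\in M^{ur}\cup M^{dr}$.

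In either case I would invoke $[m]=\tau$ in $\C$ to compute
\[
\Hom(M,N[m])=\Hom(M,\tau N),
\]
which becomes $\Hom(M,\tau^{-(i-1)}M)$ in the first case and $\Hom(M,\tau^{-(i-1)}N')$ in the second. For $i=1$ these reduce to $\En(M)$ and $\Hom(M,N')$, both of which are nonzero by assumption. For $i\geq 2$ the claim is that the translated object is still in the forward cone $M^r$, so the Hom remains nonzero. This directly contradicts the $m$-cluster tilting condition $\Hom(T,T[m])=0$, closing the argument.

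The hard part, which is the only genuinely new obstacle compared to the transjective case, is the wrap-around control in a tube: $\tau$ acts periodically on the quasi-simples of $\mathcal{T}^d_p$, so one must rule out that $\tau^{-(i-1)}N'$ or $\tau^{-(i-1)}M$ silently cycles out of $M^r$. This is where I would exploit the geometric model of \cite{T12-arxiv}: reading off the $m$-diagonals parametrising $M$, $N'$ and their iterated translates inside $T^d_p\subset\mathcal{C}^m_{p,q}$, the level bound $k\leq p-1$ supplied by the previous lemma (applied to both $M$ and $N$) prevents the translated diagonals from completing a full turn of the inner or outer polygon, so the forward-cone membership is preserved and the required Hom is seen to be nonzero.
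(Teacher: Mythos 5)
Your proposal is correct and follows essentially the same route as the paper: the paper's proof consists of exactly the two steps you describe, namely ``as in the proof of Lemma \ref{componente transyectiva} we show that $N\in M^{ur}\cup M^{dr}$'' together with a citation of the preceding lemma for the bound $N\in(\cdot)_{\leq p-1}$. Your additional paragraph on controlling the periodicity of $\tau$ on the tube addresses a point the paper leaves implicit, but it does not change the approach.
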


\begin{proof}
   As in the proof of lemma \ref{componente transyectiva} we show that $N\in M^{ur} \cup M^{dr}$. It remains to prove that $N$ belongs to the first  $p-1$ levels of the  tube, but this follows from the previous lemma.
\end{proof}

Now, we want to study the  relations  between two summands of an $m$-cluster tilting object  when both are in different components.\\

\begin{lema}\label{morfismos entre P' y algun tau de P[1]}
Let $T$ be  an  $m$-cluster tilting object. Assume that  $P_a \in add(T)$ and $\tau^rP_b[1]\in add(T)$. Then $\Hom(P_a,\tau^rP_b[1])=0$.
\end{lema}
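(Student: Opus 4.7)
My plan is to compute the Hom space in the cluster category via its description as a direct sum of Homs in the bounded derived category coming from the orbit construction $\mathcal{C}_m = \mathcal{D}^b(H)/\tau^{-1}[m]$. Concretely,
\[
\Hom_{\mathcal{C}_m}(P_a, \tau^r P_b[1]) = \bigoplus_{i\in\mathbb{Z}} \Hom_{\mathcal{D}^b(H)}\bigl(P_a,\, \tau^{r-i} P_b[im+1]\bigr),
\]
and I would show that every summand vanishes by a case analysis on $i$.

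The inputs available are: (i) $P_a$ is projective, so $\Ext^{\geq 1}(P_a,-)=0$; (ii) $H$ is hereditary, so $\Ext^{\geq 2}(-,-)=0$ on $\operatorname{mod}H$; and (iii) the derived-category identity $\tau^s P_b = (\tau^{s-1} I_b)[-1]$ for $s \geq 1$, coming from $\tau = \nu[-1]$ and the absence of projective-injective modules in type $\widetilde{\mathbb{A}}$. Most summands vanish on sight: the associated shift either lands in $\Ext^{\geq 1}$ from the projective $P_a$, or it is strictly negative, which cannot accept a nonzero morphism from a module. The only surviving candidate is the $i=0$ term, which gives $\Ext^1(P_a, P_b) = 0$ when $r = 0$, and reduces for $r \geq 1$ to the module-level Hom $\Hom_H(P_a, \tau^{r-1}I_b)$.

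To dispose of this remaining term I would then invoke the $(m+1)$-Calabi--Yau duality $\Hom(X, \tau Y[1]) \cong D\Hom(Y, X)$ with $Y = \tau^{r-1}P_b$, obtaining
\[
\Hom_{\mathcal{C}_m}(P_a, \tau^r P_b[1]) \cong D\Hom_{\mathcal{C}_m}(\tau^{r-1}P_b, P_a),
\]
compute the right-hand Hom by the same orbit-sum method, and combine the Auslander--Reiten formula (which, for $r\neq 1$, rewrites $\Hom(\tau^{r-1}P_b,P_a)$ as $D\Ext^1(P_a, \tau^r P_b)=0$) with the $m$-cluster tilting rigidity $\Hom(\tau^r P_b[1], P_a[k]) = 0$ for $1\leq k\leq m$, valid since both objects lie in $\operatorname{add} T$.

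The main obstacle is the boundary case $r=1$, in which $\tau^{r-1}P_b = P_b$ is itself projective, so the Auslander--Reiten formula does not simplify $\Hom(P_b, P_a)$ directly. In this case the vanishing of the offending $\Hom_H(P_a, I_b) = \Hom_H(P_a, \tau P_b[1])$ has to be extracted by combining Calabi--Yau duality with the cluster tilting rigidity, using that both $P_a$ and $I_b = \tau P_b[1]$ are summands of $T$ together with the specific structure of $T$ as an $m$-cluster tilting object in type $\widetilde{\mathbb{A}}$.
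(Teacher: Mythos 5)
You correctly reduce, via the orbit-sum $\Hom(P_a,\tau^rP_b[1])=\bigoplus_{i\in\mathbb{Z}}\Ho_{\mathcal{D}^b(H)}(P_a,\tau^{r-i}P_b[im+1])$, to the single surviving term $\Ho_H(P_a,\tau^{r-1}I_b)$ for $r\geq 1$, and you are right that this term does not vanish for free, so the rigidity of $T$ must enter. The gap is that neither mechanism you propose actually kills it. The Auslander--Reiten formula step is either vacuous or circular: at the module level it only computes $\Ho(\tau^{r-1}P_b,P_a)$ modulo morphisms factoring through projectives, which is trivially zero because the target $P_a$ is itself projective and therefore says nothing about the full Hom-space; while at the level of $\C$, Serre duality turns $\Hom(\tau^{r-1}P_b,P_a)$ straight back into $D\Hom(P_a,\tau^rP_b[1])$, the quantity you set out to compute. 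You then concede that the case $r=1$ --- where the assertion is precisely that $P_a,\,I_b\in add(T)$ forces $\Ho_H(P_a,I_b)=0$ --- is left open; but that is the entire nontrivial content of the lemma, and the same difficulty persists for every $r\geq 1$, not only for $r=1$.

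The missing ingredient, which the paper's proof supplies, is a structural fact about hom-hammocks in the transjective component. The paper argues entirely inside $\C$ in two lines: rigidity of $T$ gives $0=\Hom(\tau^rP_b[1],P_a[1])\cong\Hom(P_b,\tau^{-r}P_a)$, so $\tau^{-r}P_a\notin P_b^r$; and if the conclusion failed, the $(m+1)$-Calabi--Yau property would give $0\neq D\Hom(\tau^{r-1}P_b,P_a)\cong D\Hom(P_b,\tau^{1-r}P_a)$, so $\tau(\tau^{-r}P_a)\in P_b^r$. These two statements are incompatible because the hammock $P_b^r$ is a forward cone in the transjective component, closed under $\tau^{-1}$. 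Some input of this kind --- a description of exactly which transjective objects receive nonzero maps from $P_b$ --- is indispensable, and your purely homological computation in $\mathcal{D}^b(H)$ never touches it.
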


\begin{proof}
Since  $P_a, \tau^rP_b[1] \in add(T)$ and $T$ is an  $m$-cluster tilting object we have
$$0=\Hom(\tau^rP_b[1],P_a[1])\cong
\Hom(P_b,\tau^{-r}P_a)$$
 and then $\tau^{-r}P_a \notin P_b^r$. Otherwise,  if  $0\neq \Hom(P_a,\tau^rP_b[1])$ using the  $(m+1)$-CY  property of $\C$ we obtain  $0\neq D\Hom(\tau^rP_b, \tau P_a)\cong D\Hom(P_b,\tau^{1-r}P_a)$ and $\tau(\tau^{-r}P_a) \in P_b^r$, a contradiction.
\end{proof}

To continue we need the following definitions.\\

\begin{defi}
Let  $\mathcal{C}$ be a tubular component of the  Auslander-Reiten quiver of  $\C$ and let   $M$ be an object in   $\mathcal{C}$. Then we define the following subsets:

$$M^+=\{N\in \mathcal{C} | \ \  \Hom(M,N)\neq
0 \}_{\leq p-1}$$
$$M^-=\{N\in \mathcal{C} | \ \ \Hom(N,M)\neq
0 \}_{\leq p-1}$$
\end{defi}

In the same way we can define the same subsets if
$\mathcal{C}$ is a component  of the category of  $m$-diagonals $\mathcal{C}^m_{p,q}$.\\

\begin{defi}
Let  $M$ be an  object belonging to the mouth of a  tube  of rank $p$ in $\mathcal{C}_m$ or $\mathcal{C}^m_{p,q}$. We define  the \emph{cone} $\widehat{M}$  of $M$ to be the set  $$\widehat{M}=\left (\bigcup_{N\in M^{dr}}N^{dl}\right )_{\leq p-1} $$
\end{defi}

\begin{lema}\label{M^}Let $S_i$ be the simple associated to the vertex $i$ of the quiver and  let $M$ be an indecomposable  object   in the same  tube as  $S_i$. Then:
\begin{enumerate}
  \item $\text{If  } i\in \{1,\cdots, p-1 \},  \text{  } \dim \Hom(P_i,M)=\left\{
              \begin{array}{ll}
                1, & \hbox{\text{if }  $M\in \widehat{S_i}$ ;} \\
                0, & \hbox{\text{otherwise}.}
              \end{array}
            \right.$
  \item $\text{If  } i\in\{0,p, p+1,\cdots, p+q-1\},  \text{   }\dim \Hom(P_i,M)=\left\{\begin{array}{ll}
                                                             1, & \hbox{\text{if  }$M\in \widehat{M_0}$;} \\
                                                             0, & \hbox{otherwise.}
                                                           \end{array}
                                                         \right.$
\end{enumerate}

\end{lema}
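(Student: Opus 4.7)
My plan is to reduce the cluster-category Hom to a module-category Hom using the projectivity of $P_i$, and then read off the answer from the composition series of $M$ in the tube.

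First, I would collapse $\Hom(P_i,M)$ to $\Ho_H(P_i,M)$. By definition
\[
\Hom(P_i,M)\;=\;\bigoplus_{k\in\mathbb{Z}}\Ho_{D^b(H)}(P_i,\tau^{-k}M[km]).
\]
Since $M$ lies in a tube, which is stable under $\tau$, every $\tau^{-k}M$ remains in $modH$, and the $k$-th summand becomes $\Ext^{km}_H(P_i,\tau^{-k}M)$. For $k\ge 1$ this vanishes by the projectivity of $P_i$, and for $k\le -1$ it vanishes because $D^b(H)$ admits no Hom in negative degree between objects of $modH[0]$. Only the $k=0$ term survives, giving $\dim\Hom(P_i,M)=\dim\Ho_H(P_i,M)=(\underline{\dim}\,M)_i$.

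Next, I would compute $(\underline{\dim}\,M)_i$ from the structure of the tube of rank $p$. Its quasi-simples are $M_0,M_1=S_{p-1},\ldots,M_{p-1}=S_1$, and an indecomposable at quasi-length $r\le p-1$ has composition series consisting of $r$ distinct consecutive quasi-simples (indices mod $p$). Since $M_0$ is supported precisely on the $q$-arm $\{0,p,p+1,\ldots,p+q-1\}$, while each $S_j$ with $1\le j\le p-1$ is supported only at vertex $j$, each vertex $i$ of the quiver is supported by exactly one quasi-simple of the tube. Hence $(\underline{\dim}\,M)_i\in\{0,1\}$, equal to $1$ precisely when the relevant quasi-simple --- namely $S_i$ in case (1) and $M_0$ in case (2) --- occurs as a composition factor of $M$.

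Finally, I would identify the cone $\widehat{N}$ with the classical wing of the quasi-simple $N$. Unpacking
\[
\widehat{N}\;=\;\Bigl(\bigcup_{X\in N^{dr}} X^{dl}\Bigr)_{\le p-1}
\]
via the description of irreducible morphisms in a tube, the co-rays $X^{dl}$ along $X\in N^{dr}$ sweep out exactly the triangular region at levels $1,\ldots,p-1$ whose members contain $N$ as a composition factor. Combining this with the previous step yields both parts of the lemma. The step I expect to be the main obstacle is this last identification: the cone $\widehat{N}$ is defined via ``down-right'' rays and ``down-left'' co-rays using conventions inherited from the transjective picture, and one has to carefully transport these conventions to the tubular AR-quiver to confirm that they trace out exactly the classical wing of the quasi-simple. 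Once that matching is in place, the composition-factor computation gives the stated dimensions at once.
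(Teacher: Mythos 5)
Your proposal is correct and follows essentially the same route as the paper's (much terser) proof: reduce $\dim\Hom(P_i,M)$ to the multiplicity of the relevant simple (or of the composition factors of $M_0$) in $M$, and observe that this multiplicity is $1$ exactly on the cone $\widehat{S_i}$ (resp. $\widehat{M_0}$) and $0$ outside it. The only difference is that you spell out the orbit-category reduction $\Hom(P_i,M)=\Ho_H(P_i,M)$ and the identification of the cone with the wing, both of which the paper leaves implicit.
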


\begin{proof} To prove this it is enough to see that the  simple $S_i$ appears just once  as a composition factor of  $M$ if $M\in \widehat{S_i}$ and does not appear outside  the cone  $\widehat{S_i}$. For the  quasi-simple module $M_0$, the simples $S_0, S_p$ and $S_j$ (with
$j\in\{p+1, \cdots, p+q-1\} $)  appear once in the  cone
$\widehat{M_0}$ and do not appear in the complement. The others  simples  do not appear.
\end{proof}

\begin{lema}\label{elementos del tubo q tienen morf desde un proy}
Let $T$ be an  $m$-cluster tilting object and  $P_b, N\in add(T)$ such that  $N$  is an  object in the tubular component $\mathcal{T}_p$. Suppose that $\Hom(P_b,N)\neq 0$, then  $N\in
M_*^{dl}$ where $$*=\left\{
                     \begin{array}{ll}
                       p-b, & \hbox{if $b\in\{1,\cdots, p-1\}$;} \\
                       0, & \hbox{otherwise.}
                     \end{array}
                   \right.
$$
\end{lema}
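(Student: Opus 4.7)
The statement is a sharpening of Lemma \ref{M^}: the hypothesis $\Hom(P_b,N)\neq 0$ places $N$ in the cone $\widehat{M_\ast}$, and the additional hypothesis that $P_b,N$ are both summands of an $m$-cluster tilting object $T$ should push $N$ onto the specific edge $M_\ast^{dl}$ of that cone. So the plan has two parts: first localize $N$ inside the cone using Lemma \ref{M^}, then use rigidity of $T$ to refine to $M_\ast^{dl}$.

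For the first part I would case-split exactly as in Lemma \ref{M^}. If $b\in\{1,\dots,p-1\}$ then $S_b=M_{p-b}$ and the lemma gives $N\in\widehat{S_b}=\widehat{M_{p-b}}$, so $\ast=p-b$; if instead $b\in\{0,p,p+1,\dots,p+q-1\}$ then the lemma gives $N\in\widehat{M_0}$, so $\ast=0$. In both cases, unwinding the definition
\[
\widehat{M_\ast}=\Bigl(\bigcup_{N'\in M_\ast^{dr}}(N')^{dl}\Bigr)_{\leq p-1}
\]
means we can write $N\in (N')^{dl}$ for some $N'$ on the ray $M_\ast^{dr}$ of quasi-length at most $p-1$. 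To reach the conclusion $N\in M_\ast^{dl}$ it suffices to show $N'=M_\ast$.

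For the second part I would argue by contradiction: suppose $N'=Q_\ast^k$ with $k\geq 2$. The composition of irreducible maps along $M_\ast^{dr}$ gives a nonzero morphism $M_\ast\to N'$, and composing with the nonzero $P_b\to N$ (which factors through $N'$, since $N\in(N')^{dl}$) would produce extra morphism data. Applying the $(m+1)$-Calabi--Yau duality $\Hom(X,Y[1])\cong D\Hom(Y,X[m])$ from the preliminaries, together with a careful analysis of how $\tau$ shifts indices on the mouth, this off-ray position should translate into a nonzero class in either $\Hom(P_b,N[j])$ or $\Hom(N,P_b[j])$ for some $1\leq j\leq m$, contradicting the defining property of the $m$-cluster tilting object $T$.

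The main obstacle will be carrying out the $\Ext$-computation inside the tube with the right $\tau$-bookkeeping, because the positions in the cone $\widehat{M_\ast}$ are parametrized by two indices (which $N'\in M_\ast^{dr}$ and where on $(N')^{dl}$) and the Calabi--Yau duality relates them to a projective sitting in the transjective component. A possibly cleaner alternative would be to transfer the problem through the functor $F$ to the category of $m$-diagonals: there $P_b$ corresponds to $\alpha_b$ and $N$ to a type-$2$ diagonal, and the conclusion becomes the statement that if $N$ is non-crossing with $\alpha_b$ in some $(m+2)$-angulation then its endpoint on the outer polygon is forced onto the specific arc cut out by $\alpha_b$, which is exactly the geometric description of $M_\ast^{dl}$.
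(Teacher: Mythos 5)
Your first step is exactly the paper's: Lemma \ref{M^} places $N$ in the cone $\widehat{M_*}$ with $*$ as stated. The gap is in the second step. You set up a proof by contradiction (assume $N$ lies on the coray $(N')^{dl}$ with $N'=Q_*^k$, $k\geq 2$) and then assert that this position ``should translate into a nonzero class in either $\Hom(P_b,N[j])$ or $\Hom(N,P_b[j])$ for some $1\leq j\leq m$'' after ``a careful analysis of how $\tau$ shifts indices''. That translation is precisely the content of the lemma, and you never produce the class: you do not say which $j$ works, which of the two Hom-spaces is nonzero, or why. Moreover, trying to build the nonzero morphism by composing $M_*\to N'$ with a factorization of $P_b\to N$ through $N'$ is the hard direction --- you would be constructing a map in the tube rather than detecting a vanishing.

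The paper closes this in one line, and no Ext-bookkeeping over the cone is needed. Rigidity of $T$ gives the single vanishing $\Hom(N,P_b[1])=0$ (so $j=1$, and it is the second Hom-space in your dichotomy). The $(m+1)$-Calabi--Yau formula turns this into
\[
0=\Hom(N,P_b[1])\cong D\Hom(P_b,\tau N),
\]
so $\Hom(P_b,\tau N)=0$. Now apply Lemma \ref{M^} a second time, to $\tau N$ instead of $N$: it forces $\tau N\notin\widehat{M_*}$, i.e. $N\notin\tau^{-1}\widehat{M_*}=\widehat{M_{*-1}}$. Since $\widehat{M_*}\setminus\widehat{M_{*-1}}$ is exactly the left edge $M_*^{dl}$ of the cone, the conclusion follows. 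So the missing idea is not a computation inside the tube with ``$\tau$-bookkeeping'' but the observation that Lemma \ref{M^} can be reused on $\tau N$ to excise the shifted cone $\widehat{M_{*-1}}$; once you see that, your contradiction argument is just the contrapositive of this two-line computation, and your reduction ``it suffices to show $N'=M_*$'' becomes unnecessary.
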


\begin{proof}
Since $\Hom(P_b,N)\neq 0$, by remark \ref{M^}, we know that  $N\in \widehat{M_*}$ with $*$ as above. In addition, since  $P_b, N\in add(T)$, we have  $0=\Hom(N,P_b[1])\cong
 D \Hom(P_b,\tau N)$. Then  $\tau N\in \widehat{M_*}^c$ and $N\in
(\tau^{-1}\widehat{M_*})^c=(\widehat{M_{*-1}})^c$. In consequence
$N\in \widehat{M_*}\cap (\widehat{M_{*-1}})^c=M_*^{dl} $.
\end{proof}

\begin{lema}\label{tubo tauideproy[1]}
Let $T$ be an  $m$-cluster tilting object and $N, \tau^rP_b[1]\in add(T)$ such that  $N$  is an object in the tubular component $\mathcal{T}_p$. Assume that $\Hom(N,\tau^rP_b[1])\neq 0$,
then $N\in M_{*-1+r}^{dr}$ with $*$ as in the previous lemma (and we compute $*-1+r$ modulo p).
\end{lema}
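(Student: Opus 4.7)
The strategy mirrors the proof of Lemma \ref{elementos del tubo q tienen morf desde un proy}: use the $(m+1)$-Calabi-Yau property to translate a hom from $N$ into a hom from the projective $P_b$, apply Lemma \ref{M^} to locate the target in the cone $\widehat{M_*}$, and use the rigidity of the $m$-cluster tilting object to pin down the exact ray. The only difference is that the ``which boundary ray'' analysis now selects $M_*^{dr}$ instead of $M_*^{dl}$.

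Concretely, I would start from the nonzero hom assumption and apply $(m+1)$-CY to obtain
\[
\Hom(N,\tau^rP_b[1])\;\cong\;D\Hom(P_b,\tau^{1-r}N)\neq 0,
\]
so that by Lemma \ref{M^} the object $\tau^{1-r}N$ lies in the cone $\widehat{M_*}$ (with $*$ as in the previous lemma). Next, I would exploit the fact that $N,\tau^rP_b[1]\in\mathrm{add}(T)$, whence $\Hom(N,\tau^rP_b[1][m])=0$. Using $[m]=\tau$ in $\C$, this reads $\Hom(N,\tau^{r+1}P_b[1])=0$, and applying $(m+1)$-CY once more gives
\[
D\Hom(P_b,\tau^{-r}N)=0,
\]
so $\tau^{-r}N\notin\widehat{M_*}$.

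The final step is to combine the two conditions: $\tau^{1-r}N\in\widehat{M_*}$ while $\tau^{-1}(\tau^{1-r}N)=\tau^{-r}N\notin\widehat{M_*}$. In the proof of Lemma \ref{elementos del tubo q tienen morf desde un proy} the opposite configuration ($X\in\widehat{M_*}$ and $\tau X\notin\widehat{M_*}$) forced $X\in M_*^{dl}$; by the symmetric argument, the configuration we have here forces $\tau^{1-r}N$ to lie on the other boundary ray of the cone, namely $M_*^{dr}$. Translating back by $\tau^{r-1}$ (which shifts the mouth index by $r-1$ and preserves the ``$dr$'' type), we conclude
\[
N\in \tau^{r-1}M_*^{dr}=M_{*-1+r}^{\,dr}\qquad(\text{indices mod } p),
\]
which is the statement.

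The only delicate point I expect is step four, i.e.\ making sure that the pair (in-cone / $\tau^{-1}$-out-of-cone) really cuts out the right boundary $M_*^{dr}$ rather than $M_*^{dl}$; this is a convention-dependent check, and it should be confirmed by revisiting the orientation of the two rays emanating from a mouth object in the chosen labelling of the tube. All other steps are straightforward applications of $(m+1)$-CY duality, the identification $[m]=\tau$ in $\C$, and Lemma \ref{M^}.
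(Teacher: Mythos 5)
Your argument is correct and follows essentially the same route as the paper's proof: the $(m+1)$-CY isomorphism places $\tau^{1-r}N$ in $\widehat{M_*}$, rigidity of $T$ gives $\Hom(P_b,\tau^{-r}N)=0$ so that $\tau^{-r}N\notin\widehat{M_*}$ (the paper extracts this directly from $\Hom(\tau^rP_b[1],N[1])=0$, while you use the $[m]$-shift vanishing plus one more CY application — the same fact), and the identity $\widehat{M_{*-1}}\cap(\widehat{M_*})^c=M_{*-1}^{dr}$ then selects the ray exactly as you describe. The ``delicate point'' you flag about which boundary ray is cut out is precisely what the paper settles with that intersection formula, consistent with the conventions of the preceding lemma.
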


\begin{proof}
Since  $0 \neq \Hom(N,\tau^rP_b[1])\cong D\Hom(\tau^rP_b,\tau N)
\cong D \Hom(P_b,\tau^{1-r}N)$ we have  $\tau(\tau^{-r}N)\in
\widehat{M_*}$. In the other hand, since $\tau^rP_b[1], N\in add(T)$ we have
$$0=\Hom(\tau^rP_b[1], N[1])\cong \Hom(P_b,\tau^{-r}N).$$
 Then
$\tau^{-r}N\in (\widehat{M_*})^c$ and consequently  $\tau^{-r}N\in
(\widehat{M_*})^c \cap \tau^{-1}\widehat{M_*}=M_{*-1}^{dr} $ and $N\in
\tau^rM_{*-1}^{dr}=M_{*-1+r}^{dr}$.
\end{proof}

In order to simplify the notation in the following lemma we introduce the next definition. \\

\begin{defi}
Let  $B$ be  an object in a tube of rank $p$ wich lies on the level
$v\in \{1, \cdots, p-1\}$ of the  tube. The index   $\alpha(B)$ of $B$ is defined by  $\alpha(B)=p-1-v$.

\end{defi}

\begin{lema}\label{morfismos entre un tubo y el siguiente}
Let $T$  be an  $m$-cluster tilting object and  $B,D \in add (T)$ such that $B\in \mathcal{T}^i_p$ and $D\in \mathcal{T}^{i+1}_p$. Then,
$\Hom(B,D)\neq 0$ if and only if $D\in
\bigl ((\mathcal{B}^-(\tau B)[1])^{dl}\bigr)_{\leq \alpha(B)}$,    and the level  $v$ of $B$ is different from $p-1$.

\end{lema}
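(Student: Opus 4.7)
The plan is to translate $\Hom(B,D)$ into a Hom inside the single tube $\mathcal{T}_p^{i+1}$ via Calabi--Yau duality, and then extract the position of $D$ from the hammock structure of that tube together with the $m$-cluster tilting vanishings forced by $T$.

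First, I apply the $(m+1)$-Calabi--Yau property of $\C$:
\[
\Hom(B,D)\;\cong\;D\,\Hom\bigl(D,\tau B[1]\bigr).
\]
Each tubular component is $\tau$-stable, while $[1]$ shifts the degree by one, so $\tau B[1]\in \mathcal{T}_p^{i+1}$, which is precisely the tube containing $D$. Therefore $\Hom(B,D)\neq 0$ is equivalent to $D\in \bigl((\tau B)[1]\bigr)^{-}$, i.e.\ $D$ lies in the left hammock of $\tau B[1]$ inside $\mathcal{T}_p^{i+1}$; moreover $D$ is already restricted to the first $p-1$ levels by the lemma bounding summands of $T$ in a rank-$p$ tube.

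Next, I would use the remaining cluster-tilting vanishings $\Hom(B,D[k])=0$ for $k=1,\ldots,m$. Writing $B=B'[i]$ and $D=D'[i+1]$ with $B',D'$ in the degree-$0$ tube $\mathcal{T}_p$, the formula $\Hom_{\C}(B',D'[k+1])=\bigoplus_{j\in\mathbb{Z}}\Hom_{D^b(H)}(B',\tau^j D'[k+1+jm])$ shows that, since $H$ is hereditary, only the indices $j$ with $k+1+jm\in\{0,1\}$ contribute. Each surviving vanishing translates, via Auslander--Reiten duality in the tube, into the assertion that $B'$ does \emph{not} lie in certain hammocks of $\tau D'$ (or translates thereof). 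Combining the non-vanishing $\Hom(D',\tau B')\neq 0$ with this finite list of vanishings cuts the hammock $\bigl((\tau B)[1]\bigr)^{-}$ down to a single ray, namely the down-left ray $\bigl((\tau B)[1]\bigr)^{dl}$ ending at $\tau B[1]$.

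Finally, I would derive the level bound by intersecting this ray with the admissible levels for a summand of $T$. If $B$ sits at level $v$ in $\mathcal{T}_p^{i}$, then $\tau B[1]$ also sits at level $v$ in $\mathcal{T}_p^{i+1}$, and the $dl$-ray starting there descends through levels $v, v-1,\dots,1$; the generic bound "$D$ at level $\leq p-1$" combined with the position of $D$ along this ray (at vertical distance at most $v$ from the top allowable level) leaves exactly the first $\alpha(B)=p-1-v$ entries, giving $D\in \bigl((\tau B)[1]\bigr)^{dl}_{\leq\alpha(B)}$. In particular when $v=p-1$ one has $\alpha(B)=0$, so this intersection is empty, which is why the statement excludes the top level.

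The main obstacle is the second step: one has to track, for each $k\in\{1,\dots,m\}$, exactly which shift $j$ contributes to $\Hom_{\C}(B',D'[k+1])$, and verify that the combined effect of all these vanishings (and of the symmetric ones $\Hom(D,B[k])=0$) removes every object of the hammock $\bigl((\tau B)[1]\bigr)^{-}$ except those on the $dl$-ray. This is a careful bookkeeping argument with the identification $[m]=\tau$ and AR-duality in the tube, and it is where the precise combinatorics of the tube (rank $p$, level $v$, index $\alpha(B)$) enter the proof.
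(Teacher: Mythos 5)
Your opening move is the same as the paper's: Calabi--Yau duality turns the cross-degree morphism space into a Hom inside a single tube (you write $\Hom(B,D)\cong D\Hom(D,\tau B[1])$ in degree $i+1$; the paper writes $D=D'[1]$ with $D'\in\mathcal{T}^i_p$ and gets $\Hom(B,D)\cong D\Hom(D',\tau B)$ in degree $i$ --- these are equivalent). The problem is your second step, which you yourself flag as ``the main obstacle'': you never actually determine which cluster-tilting vanishings are relevant, nor verify that they cut the hammock $(\tau B)^-$ down to the claimed ray. That computation \emph{is} the lemma; deferring it as bookkeeping leaves the proof incomplete. Moreover, your plan to run through all the conditions $\Hom(B,D[k])=0$ and $\Hom(D,B[k])=0$ for $k=1,\dots,m$ is far more than is needed and is where an unguided attempt is likely to bog down in the $\bigoplus_j\Hom_{D^b(H)}(-,\tau^jF^j(-))$ decomposition.

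The paper's proof shows that exactly two vanishings suffice. From $B,D\in\mathrm{add}(T)$ one gets $0=\Hom(D'[1],B[1])\cong\Hom(D',B)$, i.e.\ $D'\notin B^-$, and $0=\Hom(D'[1],B[2])\cong\Hom(D',B[1])\cong D\Hom(B,\tau D')$, i.e.\ $D'\notin(\tau^{-1}B)^+$. Combined with the non-vanishing condition $D'\in(\tau B)^-$, this gives
$$D'\in(\tau B)^-\setminus\bigl(B^-\cup(\tau^{-1}B)^+\bigr)=\bigl((\mathcal{B}^-(\tau B)[1])^{dl}\bigr)_{\leq\alpha(B)},$$
where the last equality is an elementary intersection of hammocks in a tube of rank $p$ (the left hammock of $\tau B$ minus the left hammock of $B$ and the right hammock of $\tau^{-1}B$ leaves a single coray, truncated at level $\alpha(B)=p-1-v$, which is empty precisely when $v=p-1$). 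If you replace your open-ended bookkeeping with these two specific conditions and carry out that set-theoretic identity, your argument closes; your final paragraph on the level bound and the exclusion of $v=p-1$ is then correct as stated.
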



\begin{proof}
 Since $D\in \mathcal{T}^{i+1}_p$,  there is $D'\in \mathcal{T}_p^i$ such that $D=D'[1]$. On the other hand,   $B,D \in add (T)$ implies that   $$0=\Hom(D'[1],B[1])\cong
\Hom(D',B)$$ and  $$ 0=\Hom(D'[1],B[2])\cong \Hom(D',B[1])\cong D
\Hom(B, \tau D').$$ Thus,  $D'\in ( B^-\cup (\tau^{-1}
B)^+)^c$. If in addition we want that $$ \Hom(B,D)\cong \Hom(B,D'[1])\cong
 D\Hom(D',\tau B)$$ be non-zero, we get
 $D'\in (\tau B)^-\setminus (B^-\cup (\tau^{-1} B)^+)=\bigl ((\mathcal{B}^-(\tau B)[1])^{dl}\bigr)_{\leq \alpha(B)}$.
 \end{proof}

To each diagonal $d$ in a tubular component of $\mathcal{C}^m_{p,q}$, we  associate two diagonals (which may coincide) in the mouth of the tube according to the following definition.\\

\begin{defi}
Let  $d$ be a  diagonal of type $2$ or $3$. We define  $\mathcal{B}(d)$ (or $\mathcal{B}^{-}(d)$ ) as the unique  diagonal that belongs to the intersection of $d^{ur}$ (or $d^{ul}$, respectively) and the mouth of the tube.    \

In the same way we define $\mathcal{B}(T)$ and $\mathcal{B}^{-}(T)$  if  $T$ is an  object in a tubular component of $\C$.
\end{defi}

The following proposition (and its dual) show   the importance of the objects that we have just defined.\\

\begin{prop}\label{morfismo a  la boca del tubo}
Let $d'$ be a diagonal of type $1$ and let  $d$  be a  diagonal of
type $2$ (or $3$). If $\Hom(F(d'),F(d))=0$, then
$\Hom(F(d'),F\mathcal{B}(d))=0$.

\end{prop}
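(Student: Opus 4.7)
I plan to argue by contrapositive: assume $\Hom(F(d'),F\mathcal{B}(d))\neq 0$ and derive $\Hom(F(d'),F(d))\neq 0$. Since $d'$ is of type $1$, I can write $F(d')=\tau^s P_a[c]$ for some vertex $a\in\{0,\ldots,p+q-1\}$, some $s\in\mathbb{Z}$, and some $c\in\{0,\ldots,m-1\}$. The autoequivalence $\tau^{-s}$ of $\C$ reduces the problem to $s=0$, since $\tau$ preserves mouths and the ray structure of each tube. Moreover, the shift $[-c]$ permutes the tubular components of rank $p$ by $e\mapsto e-c\pmod m$, sending the mouth to the mouth and the $ur$-ray of any object to the corresponding $ur$-ray of its image. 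So I may further assume $c=0$, and hence $F(d')=P_a$.

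The hypothesis now becomes $\Hom(P_a,F\mathcal{B}(d))\neq 0$. By Lemma~\ref{M^}, this is equivalent to $F\mathcal{B}(d)\in\widehat{X_a}$, where $X_a=S_{p-a}$ if $a\in\{1,\ldots,p-1\}$ and $X_a=M_0$ otherwise. Unwinding the definition
\[
\widehat{X_a}=\Bigl(\bigcup_{N\in X_a^{dr}}N^{dl}\Bigr)_{\leq p-1},
\]
there exists $N\in X_a^{dr}$ such that $F\mathcal{B}(d)\in N^{dl}$; since $F\mathcal{B}(d)$ lies on the mouth, it is in fact the mouth terminus of the ray $N^{dl}$.

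To conclude, I must verify that the same witness $N$ satisfies $F(d)\in N^{dl}$; once this is shown, the level bound for $\widehat{X_a}$ is automatic because $d$ is an $m$-diagonal and, by the lemma in Section~4, any such diagonal lies in the first $p-1$ levels of its tube, and then $F(d)\in\widehat{X_a}$ gives $\Hom(P_a,F(d))\neq 0$ as required. The geometric content is this: the relation $\mathcal{B}(d)\in d^{ur}$ says exactly that $F(d)$ and $F\mathcal{B}(d)$ lie on a common diagonal ray in the tubular component, with $F\mathcal{B}(d)$ its mouth end; this ray must coincide with $N^{dl}$, since in a tube the pair (diagonal direction, mouth terminus) uniquely determines the ray passing through those data. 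The case where $d$ is of type $3$ follows identically with the tube of rank $q$ in place of the tube of rank $p$. The principal obstacle is precisely this last identification of the $ur$-ray from $d$ to $\mathcal{B}(d)$ with a segment of $N^{dl}$; this is a combinatorial check in the AR-quiver of a tube, carried out once the conventions for the four rays $ur,ul,dr,dl$ through each object are fixed.
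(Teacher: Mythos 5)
Your plan is correct in outline but follows a genuinely different route from the paper. The paper proves the statement directly: by induction on the level of $d$ in its tube, every morphism $F(d')\to F\mathcal{B}(d)$ is shown to factor through $F(d')\to F(d)$, using the lifting property of the almost split triangles along the ray $d^{ur}$ (the base case being $\mathcal{B}(d)=d$ on the mouth, the key vanishing being the mesh relation $i\circ g=0$ there). Your contrapositive argument instead normalizes $F(d')$ to a projective $P_a$ and reads everything off Lemma \ref{M^}: since $d^{ur}$ is the coray of irreducible epimorphisms from $d$ down to the mouth, $F\mathcal{B}(d)$ is the quasi-top of $F(d)$; the condition $F\mathcal{B}(d)\in\widehat{X}$ then forces $F\mathcal{B}(d)=X$ (the only mouth object of the cone $\widehat{X}$ is its vertex), and $F(d)$ lies on the coray $X^{dl}$ of objects with quasi-top $X$, which is contained in $\widehat{X}$ --- so the ``combinatorial check'' you defer does go through, with the witness $N$ forced to equal $X$. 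What your approach buys is transparency (the statement becomes: a quasi-composition factor of the quasi-top is a quasi-composition factor of the whole module), at the price of the normalization step; the paper's factorization argument works for arbitrary $d'$ inside $\C$ with no degree bookkeeping.

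Two points need repair before this is a complete proof. First, after applying $\tau^{-s}$ and $[-c]$ you obtain $F(d')=P_a$, but nothing guarantees that $F(d)$ and $F\mathcal{B}(d)$ then lie in the degree-$0$ tube containing the objects $S_i$, which is the only situation Lemma \ref{M^} covers; you must add the (true, but not automatic) vanishing $\Hom(P_a,N[e])=0$ for $N$ regular of degree $0$ and $1\leq e\leq m-1$, for instance via the Calabi--Yau isomorphism $\Hom(P_a,N[1])\cong D\Hom(\tau^{-1}N,P_a)$ together with the absence of morphisms from regular to transjective modules in $\ts{mod}\,H$, and via Lemma \ref{morfismos entre componente C y componente C[i] con i>1 son cero} for the higher shifts. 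Second, your identification $X_a=S_{p-a}$ for $a\in\{1,\dots,p-1\}$ contradicts Lemma \ref{M^}, which assigns to $P_a$ the cone $\widehat{S_a}$; in the paper's notation the correct mouth vertex is $X_a=S_a=M_{p-a}$. Neither issue affects the architecture of the argument.
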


\begin{proof}
It is enough to show that we can factorize the morphism
$F(d')\rightarrow F(\mathcal{B}(d))$ through the  morphism $F(d')\rightarrow F(d)$.
We proceed inductively on  the level  $k$
of the diagonal  $d=O_{*,km+2}$ (or its  image $F(d)$ ) on the tube $T_p$ (or $\mathcal{T}_p$, respectively). If
$k=1$ the diagonal is in the mouth of the tube and thus  $\mathcal{B}(d)=d$.
Assume now the statement holds true for   $k\leq n$ and let $d$ be a diagonal on the level  $n+1$. We label the elements on the ray  $d^{ur}$
$$d\stackrel{b}{\rightarrow} B^{n-1}(d)\stackrel
{j_{n-1}}{\rightarrow} B^{n-2}(d)\stackrel{j_{n-2}}{\rightarrow}
\cdots \stackrel{j_2}{\rightarrow} B^1(d)\stackrel{i}{\rightarrow}
\mathcal{B}(d)$$ (where the index  $s$ is the level on the tube less $1$). We have the following situation in the tube:

$$\xymatrix@R=.3cm@C=.3cm{ \ar@{.}[rrrrr]& & & & &\tau \mathcal{B}(d)\ar[dr]^{g} \ar@{.}[rr] & & \mathcal{B}(d) \ar@{.}[r]&\\
&&&&&& B^1(d) \ar[ur]_i & &\\
&&&&& \ar[ru]^{j_{2}}&&&\\
&& d''\ar[dr]^a \ar@{-->}[uuurrr]^{f'} && \ar@{--}[ru]&&&&\\
&\tau B^{n-1}(d)\ar[ru] \ar[dr] && B^{n-1}(d) \ar[ru]^{  j_{n-1}}&&&&&\\
&& d \ar[ru]_{b      } &&&&&& }$$

Set $j=j_2\circ \cdots \circ j_{n-1}$.
By the induction hypothesis there is   $c:d'\rightarrow B^{n-1}(d)$ such that the diagram

$$\xymatrix@R=.5cm@C=.2cm{d' \ar[rr] \ar@{.>}[dr]_{\exists c  \ \ }&& \mathcal{B}(d) \\
&B^{n-1}(d) \ar[ru]_{i\circ j}&}$$

commutes.\\
In addition, since  $0\longrightarrow \tau B^{n-1}(d) \longrightarrow d''\oplus d\stackrel{\tiny{\left(
                                                  \begin{array}{cc}
                                                    a & b \\
                                                  \end{array}
                                                \right)
}} {\longrightarrow} B^{n-1}(d)$  is an almost split sequence,  there is a  morphism $\tiny{\left(
                 \begin{array}{c}
                   h \\
                   h' \\
                 \end{array}
               \right)
}:d'\rightarrow d''\oplus d$ such that  $c=\tiny{\left(
                                                  \begin{array}{cc}
                                                    a & b \\
                                                  \end{array}
                                                \right)
} \circ \tiny{\left(
                 \begin{array}{c}
                   h \\
                   h' \\
                 \end{array}
               \right)
}$. Moreover, if we apply the induction hypothesis to the diagonal  $d''$, we obtain   $f:d'\rightarrow d''$
 such that the diagram

  $$\xymatrix@R=.4cm@C=.4cm{d' \ar[rr] \ar@{.>}[dr]_{\exists f  }&& \tau \mathcal{B}(d) \\
& d'' \ar[ru]_{f'}&}$$

commutes. Then, we have the following commutative diagram:

$$\xymatrix@R=.8cm@C=.8cm{ &&& d' \ar[d]^c \ar[dl]_{\tiny{\left(
                 \begin{array}{c}
                   h \\
                   h' \\
                 \end{array}
               \right)}}&\\
0 \ar[r] & \tau B^{n-1}(d)\ar[r] & d''\oplus d \ar[d]_{\tiny{\left(
                                                                      \begin{array}{cc}
                                                                        f' & 0 \\
                                                                        0 & Id \\
                                                                      \end{array}
                                                                    \right)}
} \ar[r]_{\tiny{\left(
                                                  \begin{array}{cc}
                                                    a & b \\
                                                  \end{array}
                                                \right)
}} & B^{n-1}(d) \ar[d]^j \ar[r] & 0 \\
&& \tau \mathcal{B}(d)\oplus d \ar[r] _{\tiny{\left(
                                                  \begin{array}{cc}
                                                    g & j\circ b \\
                                                  \end{array}
                                                \right)
}}& B^1(d) \ar[d]^i& \\
&&& \mathcal{B}(d) &
}$$

and the factorization   \ \  $\xymatrix@R=.8cm@C=.5cm{d' \ar[rr]^{i\circ j\circ c} \ar[dr]_{\tiny{\left(
                 \begin{array}{c}
                   h \\
                   h' \\
                 \end{array}
               \right)} }&& \mathcal{B}(d) \\
& d''\oplus d \ar[ru]_{ \  \  \ \ \ \ i\circ \tiny{\left(
                                                  \begin{array}{cc}
                                                    g & j\circ b \\
                                                  \end{array}
                                                \right)
} \circ \tiny{\left(
                                                                      \begin{array}{cc}
                                                                        f' & 0 \\
                                                                        0 & Id \\
                                                                      \end{array}
                                                                    \right)}= \tiny{\left(
                                                  \begin{array}{cc}
                                                    i\circ g \circ f' & i\circ j\circ b \\
                                                  \end{array}
                                                \right)}} &}$

which is actually the factorization \ \  $\xymatrix@R=.8cm@C=.8cm{d' \ar[rr] \ar[dr]_{h'}&& \mathcal{B}(d) \\
&d \ar[ru]_{i\circ j \circ b}&}$  \ \  since  $i\circ g =0$.

\end{proof}

\begin{obs}
Note that the converse is not true.  Let $A$ be the algebra  of type $\widetilde{\mathbb{A}}_{2,2}$ given by the quiver:

$$\xymatrix@R=.3cm@C=.5cm{  & 0  \ar[dr] \ar[dl]& \\
1\ar[rd] && 3 \ar[ld] \\
& 2 & }$$

Then, one of the tubes of rank $2$ of the Auslander-Reiten quiver of mod  $A$ is the following

$$\xymatrix@R=.08cm@C=.3cm{ \tiny{\begin{array}{c}
               0 \\
               3\\
               2
             \end{array} } \ar[rd] \ar@{.}[ddd] &&   1  \ar[dr] && \tiny{\begin{array}{c}
               0 \\
               3\\
               2
             \end{array} }  \ar@{.}[ddd] \\
             &  \tiny{\begin{array}{ccc}
               0  && \\
               3 && 1\\
               &2&
             \end{array} }  \ar[ur] \ar[dr] && \tiny{\begin{array}{ccc}
                 &0& \\
               3 && 1\\
               2 &&
             \end{array} } \ar[ru] \ar[dr]  & \\
             \ar[ru]&& \ar[ru]&& \\
             &&&&\\
              &&&&}$$

It is clear that $\Hom(2,1)=0$  but  $\Hom(2, \tiny{\begin{array}{ccc}
               0  && \\
               3 && 1\\
               &2&
             \end{array} })\neq 0$.
\end{obs}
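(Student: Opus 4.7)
\emph{Plan.} The remark asserts that the converse of Proposition~\ref{morfismo a  la boca del tubo} --- namely the implication $\Ho(F(d'),F\mathcal{B}(d))=0 \Rightarrow \Ho(F(d'),F(d))=0$ --- can fail, and exhibits a specific counterexample in the module category of the path algebra $A=\kQ$ of type $\widetilde{\mathbb{A}}_{2,2}$ whose quiver is drawn. My proof will therefore consist of verifying the three ingredients on which the example relies: the pictured component is genuinely a rank-$2$ tube of the AR quiver of $\mathrm{mod}\,A$; the mouth object reached from the level-$2$ module $M=\begin{smallmatrix}0 & &\\3 & & 1\\ &2&\end{smallmatrix}$ along the up-right ray is the simple $S_1$; and the two claimed values of $\Ho$ are correct.

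\emph{Step 1: identifying the tube.} First I would note that $Q$ is the Euclidean quiver $\widetilde{\mathbb{A}}_{2,2}$ in the orientation fixed in Section~1, with arms $0\to 1\to 2$ and $0\to 3\to 2$, so $\mathrm{mod}\,A$ has exactly two exceptional tubes of rank~$2$. A direct calculation of almost split sequences would show that the tube associated with the arm $0\to 3\to 2$ has quasi-simples $\begin{smallmatrix}0\\3\\2\end{smallmatrix}$ (the uniserial module on that arm) and $S_1$; the middle terms of the two AR sequences starting at these quasi-simples are exactly the two level-$2$ indecomposables drawn in the picture.

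\emph{Step 2: computing $\mathcal{B}(d)$ and the Hom-spaces.} Next I would set $F(d)=M$ and let $d'$ be the type-$1$ diagonal with $F(d')=S_2$; under the Convention, $F(d')=P_2=S_2$ since vertex~$2$ is a sink, and because $S_2$ is projective and $A$ is hereditary, $\Ho(S_2,-)$ in $\C$ reduces on modules to $\Ho_A(S_2,-)$. Reading off the tube, the unique irreducible morphism ``up-right'' out of $M$ lands on the mouth at $S_1$, so $F\mathcal{B}(d)=S_1$. Then $\Ho(S_2,S_1)=0$ because these are distinct simple modules, confirming the antecedent of the would-be converse. But the Loewy structure of $M$ has $S_2$ as its socle --- both paths $1\to 2$ and $3\to 2$ terminate at vertex~$2$ and no arrow leaves $2$ --- so the canonical inclusion $S_2\hookrightarrow M$ provides a non-zero element of $\Ho(S_2,M)$, violating the conclusion of the converse.

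\emph{Main obstacle.} The only point requiring more than direct inspection is Step~1. To carry it out, I would appeal to the standard Dlab--Ringel classification of regular modules over tame hereditary algebras; alternatively, I would compute $\tau$ on the two candidate quasi-simples via the Coxeter transformation acting on dimension vectors and check that they form a single $\tau$-orbit of length~$2$.
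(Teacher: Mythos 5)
Your verification is correct and follows the same route as the paper, which simply exhibits the rank-$2$ tube and asserts the two Hom computations: you confirm that the displayed component is the exceptional tube with quasi-simples $S_1$ and the uniserial module on the arm $0\to 3\to 2$, that $F\mathcal{B}(d)=S_1$, that $\Hom(S_2,S_1)=0$, and that $S_2$ embeds in the socle of the level-$2$ module so that $\Hom(S_2,M)\neq 0$. Your Step 1 (identifying the tube via AR sequences or the Coxeter transformation) merely makes explicit what the paper leaves to inspection, so there is nothing to add.
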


\section{The ordinary quiver of the algebra $\End(F(\D))$ }

If $\D=\{d_1, \cdots, d_{p+q}\}$ is an  $\m$-angulation, then we denote by   $F(\D):=F(d_1)\oplus \cdots \oplus F(d_{p+q})$ the corresponding $m$-cluster tilting object.\\

We write $Q_{\D}^0$ the  subquiver of the coloured quiver  associated to the $\m$-angulation $\D$ given by the arrows of colour $0$. Now, the idea is to compare the  quiver $Q_{\D}^0$  with the ordinary quiver of the endomorphism  algebra $\End(F(\D))$.\\

\begin{defi} Let $x$ be a vertex in the outer polygon. We define  $d_{x\curvearrowright}$ as the set of all the  $m$-diagonals of type 1 starting at the vertex  $x$.
\end{defi}

\begin{lema} Let $x$ be a vertex of the outer polygon (then $0\leq x\leq mp-1$) such that $x\equiv 0$ (mod $m$). Then there  exists $y\in \{1,\cdots,p\}$ such that $x=m(p-y)$ and:

\begin{enumerate}
  \item   The diagonal $\alpha_{p-y}$ belongs to  $d_{x\curvearrowright}$,
and   $d_{x\curvearrowright}$ belongs to the component $S^0$.

  \item   $F(d_{x\curvearrowright})=\{\tau ^kP_{(p-y)-j}$ $ |$ $k\equiv j$ (mod $p$), $j\in \{-y,\cdots,p-y\} \}$ $\bigcup \{ \tau^{-y+kp}P_{p+j} | k\in Z, j\in \{0, \cdots, q\}\} $.
\end{enumerate}

\end{lema}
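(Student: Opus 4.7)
The plan is to first identify $\alpha_{p-y}$ as an element of $d_{x\curvearrowright}$, transfer component information from $\mathcal{S}^0$ to $S^0$ via the isomorphism $F$, and then extend to all of $d_{x\curvearrowright}$ by showing the entire set sits in a single AR component. Finally, I parametrize $d_{x\curvearrowright}$ by elementary moves and translate the result into the explicit description of $F(d_{x\curvearrowright})$.

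For part (1), I first read off from the figure defining $\D_P$ that $\alpha_{p-y}$ is the type~1 diagonal whose outer endpoint is $O_{m(p-y)} = O_x$, so that $\alpha_{p-y} \in d_{x\curvearrowright}$. Since $F(\alpha_{p-y}) = P_{p-y}$ lies in the transjective component $\mathcal{S}^0$ of $\C$, the Proposition of Section~1 yields $\alpha_{p-y} \in S^0$. To extend this to all of $d_{x\curvearrowright}$, note that every element is necessarily of type~1, since its inner endpoint $I_j$ satisfies $j \equiv x \equiv 0 \pmod m$, and two diagonals in $d_{x\curvearrowright}$ whose inner endpoints differ by $m$ steps along the inner polygon are connected by a single elementary move pivoting the inner endpoint. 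Elementary moves are the arrows of the AR quiver of $\mathcal{C}^m_{p,q}$ and hence preserve connected components, so $d_{x\curvearrowright}$ lies entirely in the component of $\alpha_{p-y}$, namely $S^0$.

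For part (2), I parametrize $d_{x\curvearrowright}$ by starting at $\alpha_{p-y}$ and iterating elementary moves that pivot the inner endpoint in each of the two possible rotational directions around the inner polygon. In the direction that first produces $\alpha_{p-y-1}, \alpha_{p-y-2}, \ldots, \alpha_0$, subsequent moves wind the diagonal once around the central hole and emerge as $\tau^{-1}\alpha_{p+q-1}, \tau^{-1}\alpha_{p+q-2}, \ldots$, after which the pattern repeats with further $\tau^{-1}$-shifts; continuing past $\alpha_{p-y}$ in the opposite sense produces $\tau$-shifts of the $\alpha_i$'s on the other arm. Applying $F(\tau^t \alpha_i) = \tau^t P_i$ and collecting results yields the first family $\tau^k P_{(p-y)-j}$ with $k \equiv j \pmod p$, the congruence encoding the $p$-fold $\tau$-periodicity of a full loop around the hole. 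The opposite rotational direction is handled symmetrically and yields the second family $\tau^{-y+kp} P_{p+j}$, traversing the arm $0, p+1, p+2, \ldots, p+q-1, p$ of $\tilde{\mathbb{A}}_{p,q}$; the two families overlap precisely at the projectives $P_p$ and $P_0$ sitting at the common endpoints of the two arms.

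The main obstacle is the bookkeeping of the $\tau$-shifts that accumulate as the winding number of the diagonal around the inner hole grows. Pinning down the exact formula requires fixing the convention that $\tau$ rotates the outer polygon $m$ steps clockwise and the inner polygon $m$ steps counterclockwise, and then verifying the base cases carefully to extract both the congruence $k \equiv j \pmod p$ of the first family and the shift $k \mapsto -y + kp$ of the second. Once this is done, the remaining steps are routine consequences of $\tau$-equivariance of $F$.
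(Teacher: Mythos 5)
Your proposal is correct and follows essentially the same route as the paper: identify $\alpha_{p-y}$ as the type~$1$ diagonal from $O_{m(p-y)}=O_x$ to $I_0$ with $F(\alpha_{p-y})=P_{p-y}\in\mathcal{S}^0$, and then recognize $F(d_{x\curvearrowright})$ as the infinite ray through $P_{p-y}$ obtained by pivoting the inner endpoint. The paper simply exhibits this ray in a figure and reads off the formula, whereas you spell out the elementary-move connectivity and the $\tau$-shift bookkeeping in words; the content is the same.
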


\begin{proof}
The existence of  $y$ is clear. Next, for 1 we simply observe that the diagonal $\alpha_{p-y}$  (if $ y\in \{1,\cdots,p\}$) is by definition a  diagonal starting at  $O_{m(p-y)=x}$ and finishing at  $I_0$ and it belongs to the component   $S^0$. Then $F(d_{x\curvearrowright})$ is the following  infinite ray  containing the object  $F(\alpha_{p-y})=P_{p-y}$:

\begin{figure}[H]
\includegraphics[scale=.8]{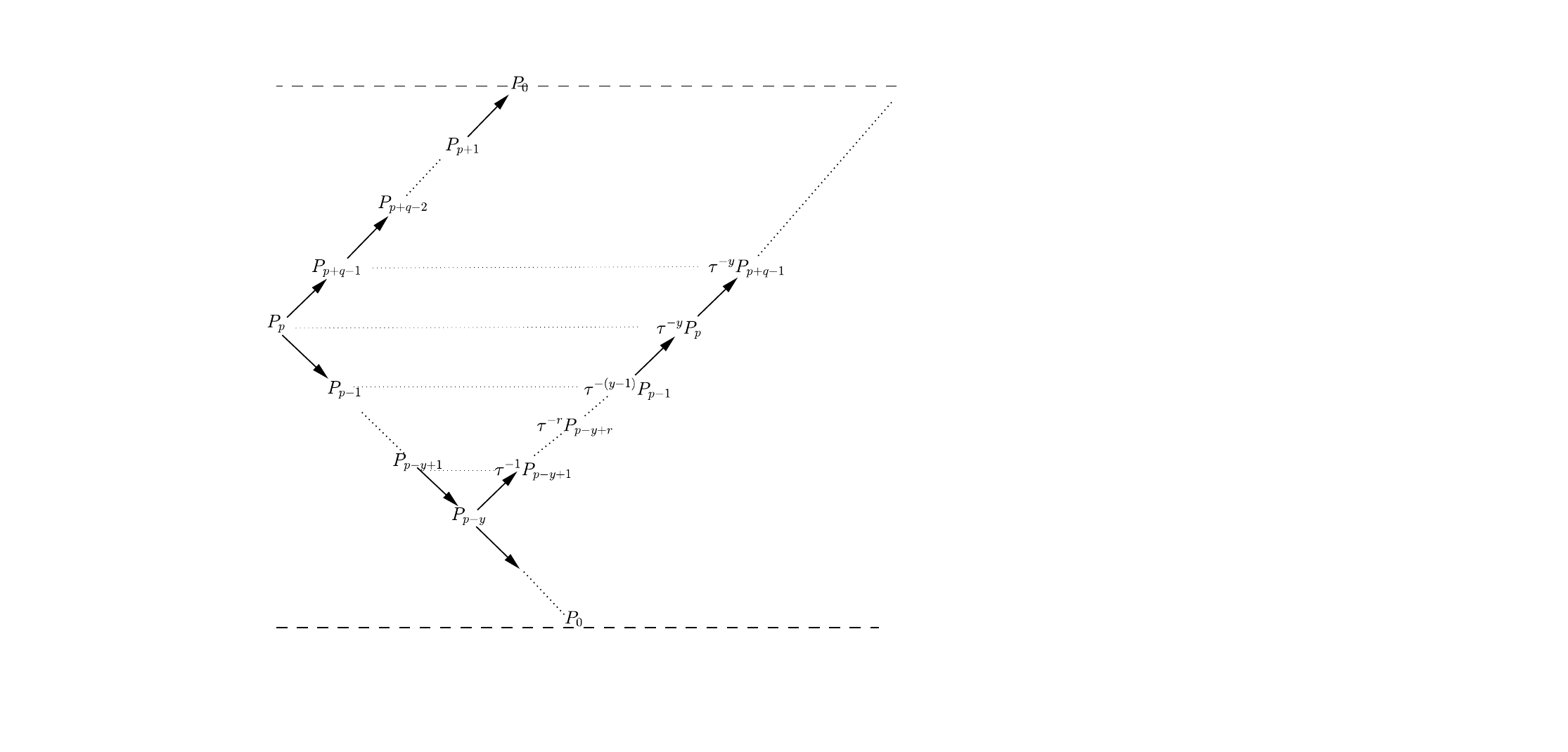}
\vspace{-1cm}
\caption{The ray $F(d_{x\curvearrowright})$ } \label{rayo}
\end{figure}

Consequently, $F(d_{x\curvearrowright})=\{\tau ^kP_{(p-y)-j}$ $|$ $k\equiv j$ (mod $p$), $j\in \{-y,\cdots,p-y\}$ \} $\bigcup \{ \tau^{-y+kp}P_{p+j} $$|$$ k\in \mathbb{Z}, j\in \{0, \cdots, q\}\} $.
\end{proof}

\begin{obs}
In general, if $x\equiv m-i$ (mod $m$) with  $i\in \{0,\cdots,m-1\}$ then   there is  $y\in \{1,\cdots,p\}$ such that  $x=m(p-y)-i$ and:

\begin{enumerate}

  \item   $d_{x\curvearrowright}$ belongs to the component $S^i$.
  \item   $F(d_{x\curvearrowright})=\{\tau ^kP_{(p-y)-j}[i]$ $|$ $k\equiv j$ (mod $p$), $j\in \{-y,\cdots,p-y\}$ \} $\bigcup \{ \tau^{-y+kp}P_{p+j}[i] \ \  |  k\in \mathbb{Z}, j\in \{0, \cdots, q\}\} $.
\end{enumerate}
\end{obs}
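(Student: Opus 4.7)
The plan is to reduce this remark to the preceding lemma by transporting every object along the shift $[i]$. Since $x\equiv m-i\pmod{m}$, we have $x+i\equiv 0\pmod{m}$, so there is a unique $y\in\{1,\ldots,p\}$ with $x+i=m(p-y)$, i.e.\ $x=m(p-y)-i$; this fixes the $y$ demanded by the statement.

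For part (1), I would consider the shifted diagonal $\alpha_{p-y}[i]$. By the definition of $[i]$, this is obtained by rotating the outer polygon $i$ steps clockwise, sending $O_{m(p-y)}$ to $O_{m(p-y)-i}=O_x$ (the outer labels increase counterclockwise), and rotating the inner polygon $i$ steps counterclockwise, sending $I_0$ to $I_{-i}=I_{mq-i}$ (the inner labels increase clockwise). Both indices are congruent to $-i\pmod m$, so $\alpha_{p-y}[i]$ is an $m$-diagonal of type $1$ starting at $O_x$, and hence lies in $d_{x\curvearrowright}$. The defining rule $F(\tau^{t}\alpha_{j}[d])=\tau^{t}P_{j}[d]$ shows that $F$ commutes with $[i]$ on the indecomposables of type $1$, whence $F(\alpha_{p-y}[i])=P_{p-y}[i]\in\mathcal{S}^i$. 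The Torkildsen isomorphism between $S^{i}$ and $\mathcal{S}^{i}$ then forces $\alpha_{p-y}[i]\in S^i$, and since all type $1$ diagonals starting at $O_x$ are linked to this one by elementary moves along a single ray (the analogue of the ray pictured in the proof of the lemma), the whole set $d_{x\curvearrowright}$ sits in $S^i$.

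For part (2), the key point is that $d_{x\curvearrowright}$ is precisely the image under $[i]$ of the ray $d_{m(p-y)\curvearrowright}$ treated by the lemma in the case $i=0$. Applying $F$ and using $F(-[i])=F(-)[i]$ to the explicit enumeration of the lemma, each summand $\tau^{k}P_{(p-y)-j}$ listed there becomes $\tau^{k}P_{(p-y)-j}[i]$, and each $\tau^{-y+kp}P_{p+j}$ becomes $\tau^{-y+kp}P_{p+j}[i]$. This is exactly the description claimed.

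The main obstacle is purely bookkeeping: checking that the rotation conventions give precisely the endpoint formulas above (the opposite orientations on the two polygons are crucial, since otherwise the shifted diagonal would not remain of type $1$), and confirming that $F$ commutes with $[i]$ on every indecomposable in $S^0$ so that the ray structure transports faithfully to $S^i$. Once these conventions are pinned down, the remark follows as a formal translation of the lemma along the shift $[i]$.
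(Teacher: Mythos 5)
Your argument is correct and is essentially the paper's own (implicit) approach: the paper states this remark without proof, presenting it as the evident $[i]$-shifted version of the preceding lemma, and your proposal simply makes that translation explicit by transporting the ray $d_{m(p-y)\curvearrowright}$ along $[i]$ and using that $F$ commutes with $[i]$ via the defining rule $F(\tau^{t}\alpha_{j}[d])=\tau^{t}P_{j}[d]$. The bookkeeping you flag (rotation conventions, preservation of the type-$1$ congruence, and taking $y$ modulo $p$ so that it lands in $\{1,\dots,p\}$) is handled at the same level of detail as in the lemma itself, so nothing further is needed.
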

\qed

Now, we are ready to show the main result of this section.

\begin{prop} Let $\D$ be an  $\m$-angulation of $\P$.
The quiver $Q_{\End(F(\D))}$ associated  to the algebra  $\End(F(\D))$  is equal to the quiver   $Q_{\D}^0$ associated to the  $\m$-angulation $\D$.

\end{prop}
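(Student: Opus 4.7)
Both quivers have the same vertex set, the $p+q$ diagonals of $\D$ or, equivalently, the indecomposable summands $F(d_i)$ of $F(\D)$. The task is therefore to match arrows. The Gabriel quiver of $\End(F(\D))$ has its arrows $i\to j$ in bijection with a basis of $\mathrm{rad}(F(d_i),F(d_j))/\mathrm{rad}^2(F(d_i),F(d_j))$ computed inside $\ts{add}\,F(\D)$; equivalently, one counts morphisms $F(d_i)\to F(d_j)$ that do not factor through any other summand $F(d_k)$. On the geometric side, a colour-$0$ arrow $d_i\to d_j$ in $Q_\D^0$ occurs exactly when $d_i$ and $d_j$ bound a common $\m$-gon of $\D$ and share a vertex of $\P$, with $d_j$ immediately counterclockwise from $d_i$ around that $\m$-gon.

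The starting observation is that a colour-$0$ adjacency in $\D$ is precisely a single elementary move in $\mathcal{C}^m_{p,q}$, and elementary moves correspond via $F$ to irreducible morphisms in $\C$. Thus every colour-$0$ arrow $d_i\to d_j$ automatically yields a nonzero morphism $F(d_i)\to F(d_j)$ in $\C$. Two things remain to verify: (a) this morphism is irreducible inside $\ts{add}\,F(\D)$, hence produces an arrow in $Q_{\End(F(\D))}$; and (b) every arrow in $Q_{\End(F(\D))}$ arises in this way, with matching multiplicities.

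I would carry out (a) and (b) in parallel, by a case analysis on the Auslander-Reiten components containing $d_i$ and $d_j$. Transjective case: by Lemma \ref{componente transyectiva}, any summand of $F(\D)$ receiving a morphism from $F(d_i)$ lies on $F(d_i)^{ur}\cup F(d_i)^{dr}$, so the first such summand along one of these rays is both the unique target of an irreducible morphism in $\ts{add}\,F(\D)$ and, geometrically, the adjacent type-$1$ diagonal sharing a vertex of $\P$ with $d_i$. Same-tube case: Lemma \ref{sumandos de T en un tubo}, together with the preceding level bound, plays the analogous role. Consecutive-tube case: Lemma \ref{morfismos entre un tubo y el siguiente} explicitly identifies the hammock $((\mathcal{B}^{-}(\tau B)[1])^{dl})_{\leq \alpha(B)}$ of admissible targets. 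Mixed transjective-tubular case: Lemmas \ref{elementos del tubo q tienen morf desde un proy} and \ref{tubo tauideproy[1]} locate the target (respectively source) of such a morphism on a specific downward ray $M_*^{dl}$ on the mouth of the relevant tube, and Proposition \ref{morfismo a  la boca del tubo} lets one factor any morphism from a type-$1$ diagonal into the tube through the mouth diagonal $\mathcal{B}(d_j)$. In each of these situations the algebraic ``nearest neighbour in $\D$'' coincides with the geometric ``first diagonal of $\D$ encountered while rotating about a vertex of $\P$'', which is precisely a colour-$0$ adjacency.

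The main obstacle I anticipate is the mixed transjective-tubular case: one has to translate the condition ``no summand of $\D$ strictly between $F(d_i)$ and $F(d_j)$ in the appropriate ray or cone'' into the geometric condition ``no diagonal of $\D$ blocks the passage between $d_i$ and $d_j$ around a shared vertex of $\P$''. Proposition \ref{morfismo a  la boca del tubo} (and its dual) is the key tool: it reduces the factorisation question to the mouth of the tube, where the non-crossing condition on $\D$ combined with the explicit cone $\widehat{\mathcal{B}(d_j)}$ gives the desired combinatorial statement. Finally, multiplicities match because Lemma \ref{M^} and its analogues make each of the relevant hom-spaces at most one-dimensional, so the radical quotient is either $0$ or $\K$, mirroring the at-most-one colour-$0$ arrow between any two diagonals of $\D$ in $Q_\D^0$.
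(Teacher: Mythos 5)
Your overall strategy --- match the vertex sets, then identify colour-$0$ arrows with morphisms that do not factor through other summands of $F(\D)$, organised by a case analysis on the Auslander--Reiten components involved --- is the same as the paper's, and the lemmas you cite (Lemmas \ref{componente transyectiva}, \ref{sumandos de T en un tubo}, \ref{morfismos entre un tubo y el siguiente}, \ref{elementos del tubo q tienen morf desde un proy}, \ref{tubo tauideproy[1]}, \ref{M^} and Proposition \ref{morfismo a  la boca del tubo}) are exactly the ones the paper deploys. You also state explicitly the converse inclusion (every Gabriel arrow comes from a colour-$0$ adjacency), which the paper treats more implicitly.

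There is, however, a genuine flaw in your ``starting observation''. A colour-$0$ arrow $d_i\to d_j$ means only that the two diagonals share a vertex of $\P$ and bound a common $\m$-gon; it is \emph{not} a single elementary move. Two type-$1$ diagonals sharing an endpoint correspond to two objects that may lie far apart on a common ray of a transjective component (see the remark on parallel rays in Section 4), so the morphism between them is a long composition of irreducible maps; and in the mixed case --- $d_i$ of type $1$ and $d_j$ of type $2$ or $3$ in the same $\m$-gon --- the two objects lie in \emph{different} AR components (transjective versus tubular), so there is no elementary move, no common ray, and the nonvanishing of $\Hom(F(d_i),F(d_j))$ is not automatic at all. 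This is precisely where the paper does real work: it computes that $F(\mathcal{B}(d_j))$ is the quasi-simple $M_{y'}$ for an explicitly determined index $y'$, checks by hand that $\Hom(F(d_i),M_{y'})\neq 0$ for every possible position of $F(d_i)$ on the ray $F(d_{x\curvearrowright})$, and only then invokes (the contrapositive of) Proposition \ref{morfismo a  la boca del tubo} to conclude $\Hom(F(d_i),F(d_j))\neq 0$. Your outline uses that proposition only for the factorisation question and treats existence as free, so this case is missing an argument. The rest of your plan --- irreducibility via ``first summand along the ray'', one-dimensionality of the relevant Hom-spaces via Lemma \ref{M^} to force factorisation through an intermediate diagonal --- matches the paper and would go through once the existence step is supplied.
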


\begin{proof}
If $d_i$ is a diagonal in the $\m$-angulation of $\P$, we denote by $T_i$ the indecomposable object of $\C$ such that $F(d_i)=T_i$.\\

If two diagonals $d_1$ and $d_2$ share a vertex, there is a path between  $d_1$  and $d_2$ in  $Q_{\Delta}^0$. Assume that the path is oriented from  $d_1$ to $d_2$.\\

To begin, suppose that $d_1$ and $d_2$  are part of the same  $\m$-gon and assume that both diagonals are of the same   type $1$ (or $2$ or $3$) sharing a vertex. Then $d_1$ and $d_2$ lie in the same component  $S^d$ (or $T^d_p$ or $T^d_q$, respectively).
Thus, since there is an arrow   $d_1\rightarrow d_2$ and the two diagonals do not intersect, we know that  $d_2\in d_1^{ur}\cup d_1^{dr}$. According to the isomorphism between the components $S^d$  and $\mathcal{S}^d$ we have $T_2\in T_1^{ur}\cup T_1^{dr}$. Then, it is clear that there is a  morphism  $T_1\rightarrow T_2$ in  $\C$.\\
Now,  let us see the case where $d_1$ and $d_2$ are in two different $\m$-gons. Since both share a vertex of $\P$ there must be a third diagonal $d_3$ having the same vertex in common. Hence, we have the arrows $d_1\rightarrow d_3 \rightarrow d_2$ in $Q_{\Delta}^0$.
Assume that   $d_3$ is the same type as  $d_1$ and $d_2$.  Since we have the arrows   $d_1\rightarrow d_3 \rightarrow d_2$ we know that
$T_3\in T_1^{ur}\cup T_1^{dr}$ and $T_2\in T_3^{ur}\cup T_3^{dr}$. Suppose   (without loss of generality) that  $T_2\in T_1^{ur}$. Then, if $T_3\in T_1^{dr}$ we get that $T_2\notin T_3^{ur}\cup T_3^{dr}$ which is impossible. In consequence, $T_2$ and $T_3$ belong to  $T_1^{ur}$. Thus $T_1,T_2$ and $T_3$ lie in the same ray of the component  $\mathcal{S}^d$ and clearly there are   morphisms $T_1\rightarrow T_3 \rightarrow T_2$ in  $\C$.\\
We continue with the case where  $d_1$ is a diagonal of  type $1$ and $d_2$ is a diagonal of  type $2$ (or $3$),  both being part of the same  $\m$-gon.
In order to simplify the notation we assume that all diagonals are in degree  $0$. Then, we can write
$d_1=d_{xz}$ where $x=s(d_1)$, $z=t(d_1)$ and $x\equiv 0$ (mod $m$) and $d_2=O_{my-1,km+2}$ with
$y\in \{1,\cdots, p\}$ and $k\in \{1,\cdots, p-1\}$. Since there is an arrow   $d_1\rightarrow d_2$,
we know that  $d_2$ follows $d_1$   and  $t(d_2)=x$. Then, $t(d_2)=my-1+km+1\equiv x$ (mod $mp$)
 and so  $m(k+y)\equiv x$ (mod $mp$). In addition, $x \equiv  0$ (mod $m$) (since $d_1\in S^0$), then  $x=my'$ with $y'\in \{0,\cdots, p-1\}$.
 Combining these we obtain  $m(k+y-y') \equiv 0$ (mod $mp$) and so $k+y \equiv y'$ (mod $p$). In consequence  $\mathcal{B}(d_2)=O_{m(y+k-1)-1,m+2}=O_{m(y'-1)-1,m+2}$ and

$$F(\mathcal{B}(d_2))= M_{y'}=\left\{
             \begin{array}{ll}
                \begin{array}{c}  \scriptstyle{0} \\ \scriptstyle{p+1} \\ \scriptstyle{\vdots} \\ \scriptstyle{p}  \end{array} , & \hbox{if $y'\equiv 0$ (mod $p$);} \\
                \\
                S_{p-y'}, & \hbox{if $y'\neq 0$ (mod $p$).} \end{array}\right. $$

We want to prove that there is  a nonzero morphism  $T_1\rightarrow T_2$ in $\C$. By   proposition \ref{morfismo a  la boca del tubo} it suffices to show that there is a nonzero morphism  $T_1\rightarrow M_{y'}$.

Since  $T_1\in F(d_{x\curvearrowright})=\{\tau^kP_{p-y'-i} | k\equiv i \text{(mod $p$)} \}\cup \{\tau^{-y'+kp}P_{p+j}| k\in\mathbb{Z}, j\in\{0,\cdots, q\}\}$, we have the following possibilities:

\begin{itemize}
  \item If $T_1\in \{\tau^kP_{p-y'-i}| k\equiv i \text{(mod $p$)}\}$:

\begin{eqnarray*}
\Hom(\tau^kP_{p-y'-i},M_{y'})& \cong &\Hom(P_{p-y'-i},\tau^{-k}M_{y'}) \\
& = & \Hom(P_{p-y'-i},M_{k+y'})\\
& = & \Hom(P_{p-y'-i}, S_{p-k-y'}) \neq 0
\end{eqnarray*}

 because $p-y'-k\equiv p-y'-i$. \\

  \item If $T_1\in \{\tau^{-y'+kp}P_{p+j}| k\in\mathbb{Z}, j\in\{0,\cdots, q\}\}$:

\begin{eqnarray*}
\Hom(\tau^{-y'+kp}P_{p+j},M_{y'})& \cong &\Hom(P_{p+j},\tau^{y'-kp}M_{y'}) \\
& = & \Hom(P_{p+j},M_{y'-y'+kp})\\
& = & \Hom(P_{p+j},M_0)\neq 0
\end{eqnarray*}

because there is a monomorphism  $P_{p+j}\hookrightarrow M_0$.

\end{itemize}

We conclude that, in all the cases that we have seen above, there is a  morphism $T_1\rightarrow T_2$ in $\C$.\\
Now, let us see the case where  $d_1$ and  $d_2$ are in different $\m$-gons. As before, since both share a vertex of $\P$ there must be a third diagonal $d_3$ having the same vertex in common. Then, we have arrows $d_1\rightarrow d_3 \rightarrow d_2$ in  $Q_{\Delta}^0$.

At first, suppose that  $d_3$ and  $d_2$ are  diagonals of the same type sharing a vertex. Then, if   $x:=t(d_2)$, we also  have  $x=t(d_3)$ because, if we had  $x=s(d_3)$, then it would be  impossible for the diagonal $d_3$ to be in the middle of $d_1$ and $d_2$. Therefore  $d_2$ and $d_3$ share the   final vertex and consequently  $d_2\in d_3^{ur}$. Assume that $F(d_1)=P_a$ (otherwise we apply  $\tau$ as many times as necessary). Then, since there is a nonzero morphism $T_1=P_a \rightarrow T_3$, by lemma \ref{elementos del tubo q tienen morf desde un proy} we obtain $T_3\in
M_*^{dl}$, where $*= 0$ if $a\in \{p, p+1, \cdots, p+q-1, 0\}$ or $*=
p-a$ if $a\in \{1, \cdots, p-1\}$. Since $d_2\in d_3^{ur}$
we have $T_2, T_3 $ both lie in the same ray $M_*^{dl}$.

If $a\in \{1, \cdots, p-1\}$ the ray  $M_*^{dl}$ is

$$\cdots \rightarrow \tiny\begin{array}{ccccc}
                         & 0 &      &  &  \\
                       1 &   & p+1 &  &  a \\
                        &   & \vdots&  & \vdots\\
                         &   & p+q-1  &  & p-1 \\
                         &   &      & p &
                     \end{array} \rightarrow \begin{array}{ccccc}
                         & 0 &      &  &  \\
                       1 &   & p+1 &  &  a \\
                       2 &   & \vdots&  & \vdots\\
                         &   & p+q-1  &  & p-1 \\
                         &   &      & p &
                     \end{array}\rightarrow \begin{array}{ccccc}
                         & 0 &      &  &  \\
                        &   & p+1 &  &  a \\
                        &   & \vdots&  & \vdots\\
                         &   & p+q-1  &  & p-1 \\
                         &   &      & p &
                     \end{array} $$ $$ \tiny \rightarrow \begin{array}{c}
                                               a \\
                                               a+1 \\
                                               \vdots \\
                                               p-1
                                             \end{array} \rightarrow
\cdots \rightarrow \begin{array}{c}
                                               a \\
                                               a+1 \\
                                               \end{array} \rightarrow a $$

 and it follows easily from this that the  composition  $P_a\rightarrow T_3 \rightarrow T_2$ is nonzero. Since, by remark \ref{M^}, $\dim(\Hom(P_a,T_2))=1$
we deduce that the  morphism  $P_a\rightarrow T_2$ factors through $T_3$.
The computation is completely analogous if $a\in \{p, p+1, \cdots, p+q-1,
0\}$.\\
Finally,  suppose that   $d_3$ is a  diagonal of the same  type as $d_1$ (
type $1$). We want to show  that the  morphism $T_1 \rightarrow T_2$ factors through $T_3$. Since $d_1$ is a  diagonal of type $1$ we can assume that $T_1=P$ with $P$ projective in   mod $H$. We know that there is a morphism $d_1\rightarrow d_3$ then $d_3\in d_1^{ur}$ and equivalently $T_3\in T_1^{ur}$. If $T_1=P_{p+z}$ with $z\in \{0,\cdots , q-1\}$ we have that $T_3$ must be $P_{p+z'}$ with $ 0<z'<z$. If instead,  $T_1=P_{p-y}$ with $y\in\{1,\cdots, p\}$ we get that $T_3=\tau^{-r}P_{p-y+r}$ with $r\in \{1, \cdots, y-1\}$ or $T_3=\tau^{-y}P_{p+x}$ with $x\in \{0,\cdots, q-1\}$. See figure \ref{rayo}.

In all the cases  the morphism $T_1\rightarrow T_3$ is the inclusion of $T_1 $ in $T_3$, then if $f$ is the nonzero morphism $T_3\rightarrow T_2$, the composition $T_1\rightarrow T_3\rightarrow T_2$ is the restriction $f\mid _{T_1}$ which clearly is nonzero. Since, by  remark \ref{M^}, $\dim \Hom(T_1,T_2)=1$ we deduce that $f\mid _{T_1}$ is the morphism $T_1\rightarrow T_2$ as we wanted to show.
\end{proof}

\section{The quiver with relations associated to an  $\m$-angulation}

In light of the previous results, we know how to find the ordinary quiver of an    $m$-cluster tilted algebra of  type $\widetilde{\mathbb{A}}$. Now, our problem is to find  the relations in such a  quiver. \

We start by defining an ideal  $I_{\D}$ associated to an  $\m$-angulation $\D$ of $\P$ as follows:

 \begin{defi}\label{definicion de I_Delta}

 Let $i, j, k$  be the vertices in   $(Q^0_{\D})_0$ associated to the diagonals  $d_i, d_j$ and $d_k$ respectively. Given consecutive arrows $i \stackrel{\alpha}{\rightarrow} j \stackrel{\beta}{\rightarrow} k $ in   $Q^0_{\D}$ we define the path $\beta \alpha$ to be zero if the diagonals   $d_i, d_j$ and  $d_k$ are in the same   $\m$-gon in the   $\m$-angulation  $\D$. Let    $I_{\D}$ denote the ideal in the path algebra  $kQ_{\D}^0$ generated by all such  relations.

\end{defi}

Note that our definition agrees with the one given in \cite{Murphy2010} for the  case $\mathbb{A}$.\\

\begin{prop}\label{el carcaj con relaciones de End(F(delta))}
The algebra $kQ_{\D}^0/I_{\D}$ is isomorphic to the  $m$-cluster tilted algebra $\End(F(\D))$.
\end{prop}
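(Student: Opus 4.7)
The plan is to leverage the previous proposition, which already identifies the quiver of $\End(F(\D))$ with $Q_{\D}^0$, and then to pin down the relations. Writing $\End(F(\D)) \cong kQ_\D^0/J$ for some admissible ideal $J$, the task reduces to showing $J = I_\D$. I would do this by proving two inclusions: first, that every generator of $I_\D$ vanishes in $\End(F(\D))$ (so that $I_\D \subseteq J$), and conversely, that any length-two path in $Q_\D^0$ not of the form prescribed by $I_\D$ induces a nonzero morphism in $\C$ (so that $J \subseteq I_\D$). Combined with the previous proposition and admissibility, these give $J = I_\D$.

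For the inclusion $I_\D \subseteq J$, I would take a generator, namely consecutive arrows $\alpha\colon i\to j$ and $\beta\colon j\to k$ in $Q_\D^0$ coming from three diagonals $d_i, d_j, d_k$ lying in a single $(m+2)$-gon of $\D$. By Lemma \ref{las 3 diag no pueden ser de tipo 1}, the three cannot all be of type 1, so I would split into cases according to which types appear, using Lemma \ref{final de una diagonal comienzo de la otra} and its corollary to restrict the possible configurations across the components $S^d$, $T^d_p$ and $T^d_q$. In each remaining case, the diagonals sit in a concrete AR-local pattern where $T_j$ is the apex of a mesh connecting the (rotations of the) rays through $T_i$ and $T_k$, so the composition $F(\beta)\circ F(\alpha)$ either runs through a mesh relation in $\mathcal{C}^m_{p,q}$ (and hence in $\C$ via the functor $F$) or else wraps through the maximal level $p-1$ of a tube and vanishes by the rank constraint. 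In both sub-cases, the factorization that was explicitly constructed in the previous proposition (for distinct faces) collapses, and the composition is zero.

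For the inclusion $J \subseteq I_\D$, I would start with arrows $i\to j\to k$ in $Q_\D^0$ where $d_i,d_j,d_k$ do \emph{not} all lie in a single $(m+2)$-gon. Then $d_i, d_j$ bound one $(m+2)$-gon $F_1$ and $d_j, d_k$ bound a different one $F_2$, sharing only $d_j$. Tracking the common vertex of $\P$ of the three diagonals shows that $d_i$ and $d_k$ sit in a joint geometric configuration analogous to the $d_1,d_3,d_2$ chains treated in the proof of the previous proposition. Re-running the factorization arguments given there (the explicit composition through $T_3$ when diagonals share a vertex of the outer/inner polygon, together with Proposition \ref{morfismo a  la boca del tubo} reducing tubular diagonals to their mouths) shows that $F(\beta)\circ F(\alpha)$ coincides, up to a nonzero scalar, with the nonzero morphism corresponding to a genuine diagonal of $\P$, so $\beta\alpha\notin J$.

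The main obstacle will be the case analysis for $I_\D\subseteq J$: the three diagonals may mix types, and even once Lemma \ref{las 3 diag no pueden ser de tipo 1} is applied, one must track degrees $d$ and levels $k$ in the corresponding tubes and slices to see that the alleged composition lands on an AR-mesh. Proposition \ref{morfismo a  la boca del tubo} should absorb most of the tube-level complexity by letting us reduce to checking compositions at the mouth of each tube, and the corollary to Lemma \ref{final de una diagonal comienzo de la otra} rules out the pathological source/target swaps across tubular components. With these tools the residual verifications are routine but numerous.
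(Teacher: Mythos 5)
Your first inclusion ($I_{\D}\subseteq J$) follows the paper's strategy essentially verbatim: the case analysis by diagonal type, starting from Lemma \ref{las 3 diag no pueden ser de tipo 1} and using Lemma \ref{final de una diagonal comienzo de la otra}, Proposition \ref{morfismo a la boca del tubo} and the component/level bookkeeping, is exactly what the paper does, and that half of your plan is sound.

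The genuine gap is in the converse direction. You claim that $J\subseteq I_{\D}$ follows once one checks that every \emph{length-two} path of $Q_{\D}^0$ not prescribed by $I_{\D}$ gives a nonzero morphism in $\C$, and that ``combined with admissibility'' this yields $J=I_{\D}$. That inference is false: an admissible ideal $J$ containing $I_{\D}$ and containing no extra quadratic monomials can still strictly contain $I_{\D}$, namely if $\En(F(\D))$ has a \emph{minimal} zero-relation of length $\geq 3$ (a path $A\to B\rightsquigarrow C\to D$ that vanishes although both subpaths $A\to B\rightsquigarrow C$ and $B\rightsquigarrow C\to D$ are nonzero), or a minimal non-monomial relation (a vanishing linear combination of parallel paths $\sum_i f_i g_i f_i'$ or $\sum_i f_i f_i'$ none of whose summands is individually zero). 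Nothing you have said excludes these, and nothing general about $m$-cluster tilted algebras lets you assume the relation ideal is quadratic monomial --- that is precisely part of what must be proved (it is how gentleness is later deduced). The paper devotes the entire second half of its proof to exactly this: the five cases (a$'$)--(e$'$) ruling out minimal monomial relations of length $\geq 3$, followed by the two diagram-shaped arguments ruling out commutativity-type relations with $t\geq 2$ branches. Your proposal needs an analogous argument (or an explicit reason why $J$ must be generated in degree two) before the conclusion $J=I_{\D}$ is justified.
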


\begin{proof}
We first prove that all the relations in the ideal  $I_{\Delta}$ are also relations in $\End(F(\D))$.
Suppose that  $d_i$, $d_j$ and $d_k$ are  $m$-diagonals which are part of the same  $(m+2)$-gon   arranged in such a way  that  $d_i$ and $d_j$ share a vertex of   $\P$, $d_j$ and $d_k$ share a vertex of  $\P$ but  $d_i$, $d_j$ and $d_k$ have no common vertex.

By the definition of  $Q_{\D}^0$ we have arrows  $d_i\stackrel{\alpha}{\rightarrow} d_j$
 and $d_j\stackrel{\beta}{\rightarrow} d_k$ in  $Q_{\D}^0$ which are in  bijection with the arrows  $T_i\rightarrow T_j$ and $T_j\rightarrow T_k$ between the  indecomposable factors of  $T:=F(\Delta)$. We want to show that the composition $T_i\rightarrow T_j\rightarrow T_k$  is zero in $\C$.\\
The proof will be divided into four cases, based on the type of the  diagonals. Let us first observe that,   by lemma \ref{las 3 diag no pueden ser de tipo 1},  the three diagonals $d_i, d_j$ and  $d_k$ cannot be simultaneously  of  type 1. Then, we have the following cases:\\

\begin{itemize}
  \item [(a)] \underline{The three  diagonals are of  type $2$ (or $3$)}: We have the following sub-cases.\\

\begin{itemize}
  \item [1.] The sources of $d_j$ and $d_k$ are the same: Since $d_i$, $d_j$ and $d_k$  are part of the same  $(m+2)$-gon, it is impossible to have   $d_i\cap d_j= s(d_j)=s(d_k)$. Then $d_i\cap d_j= t(d_j)$.

     Since the sources of  $d_j$ et $d_k$ coincide, the two diagonals are in the same tube (let us suppose  $T_p^r$). Moreover, since there is  a nonzero morphism $d_j\rightarrow d_k$ we have $d_k\in d_j^{dr}$ and then, by lemma \ref{final de una diagonal comienzo de la otra}, the diagonal $d_i$ belongs to the tube  $T_p^{r-1}$.   Applying lemma \ref{morfismos entre un tubo y el siguiente} to the induced morphism $T_i\rightarrow T_j$  we obtain $T_j\in \bigl((\mathcal{B}^-(\tau T_i)[1])^{dl}\bigr)_{\leq\alpha(T_i)}$. If  $\Hom(T_i,T_k)\neq 0$, by the same lemma, we see that
      $T_k$ belongs to the same set as $T_j$. But, if this is the case, we cannot have $T_k\in T_j^{dr}$ as we saw above. Consequently,  the composition $T_i\rightarrow T_j\rightarrow T_k$ is zero.\\


  \item [2.] The target of  $d_k$ is equal to the   source of $d_j$: Since there is a nonzero  morphism  $d_i\rightarrow d_j$ we have $t(d_j)=t(d_i)$ or $s(d_i)=t(d_j)$. In the first case, it is clear that the three diagonals cannot be part of the same  $\m$-gon. Then $s(d_i)=t(d_j)$. Suppose that $d_k\in T^{i}_p$, then since $t(d_k)=s(d_j)$ lemma \ref{final de una diagonal comienzo de la otra} implies  $d_j\in T^{i-1}_p$. The same lemma applied to the diagonals $d_i$ and $d_j$ gives $d_i\in T^{i-2}_p$. We want to show that $\Hom(T_i,T_k)=0$.
      If $m>2$ let $T_k'\in \mathcal{T}^{i-2}_p$ be such that $T_{k}:=T_k'[2]$, then  $\Hom(T_i,T_k)=\Hom(T_i,T_{k}'[2])=0$ by  lemma \ref{morfismos entre componente C y componente C[i] con i>1 son cero}. If instead $m=2$ the two diagonals $d_i$ and $d_k$ belong to the same tube, say $T^0$, and $d_j$ belongs to the tube $T^1$. Thus, the only way to have a nonzero  morphism $d_i\rightarrow d_k$ is that both diagonals share a vertex. That is,  $s(d_i)=s(d_k)$ or $t(d_i)=t(d_k)$. Suppose $s(d_i)=s(d_k)$, then since $t(d_j)=s(d_i)$ we have $t(d_j)=s(d_k)$. Recall that we are in the case where  $s(d_j)=t(d_k)$, in consequence the diagonals $d_j$  and $d_k$ share the same extremities and then  it is not possible to have one diagonal $d_i$ of the same  type so that the three  diagonals delimit the same $\m$-gon. If $t(d_i)=t(d_k)$ we conclude similarly that the diagonals $d_j$  and $d_i$ share the same extremities and thus  it is not possible to have one diagonal $d_k$ of the same  type so that the three  diagonals delimit the same $\m$-gon. Therefore it is not possible to have a nonzero morphism $T_i \rightarrow T_k$.\\

  \item [3.] The targets of  $d_j$ and $d_k$ are equal: Let  us see  the possibilities for  $d_i$. If  $t(d_i)=s(d_j)$ the three diagonals  cannot be part of the same  $\m$-gon. Then $s(d_i)=s(d_j)$ and consequently $T_j\in T_i^{dr}$. Since the three diagonals are in the same  tube then by lemma \ref{sumandos de T en un tubo}, to prove that $\Hom(T_i,T_k)=0$  it suffices to show that $T_k\notin T_i^{ur}\cup T_i^{dr}$. This is due to the facts that $T_j\in T_i^{dr}$ and   $t(d_j)=t(d_k)$ implies  $T_k\in T_j^{ur}$.\\

\end{itemize}

  \item [(b)] \underline{There is one  diagonal of each type}:  Since $d_i\cap d_j$ is a vertex in the inner (or outer) polygon   and  $d_j\cap d_k$ is a vertex in the outer (or inner, respectively) polygon , the diagonal $d_j$ must be the diagonal of type $1$. Assume $d_j\in S^z$, $d_i\in T^x_p$ is the   diagonal of type $2$ and $d_k \in T^y_q$ is the  diagonal of type $3$. Since there are arrows $d_i\rightarrow d_j \rightarrow d_k$ we have $z=y=x+1$. Then, if we take  $T_{k'}\in \mathcal{T}^x_p$ such that $T_k=T_{k'}[1]$, by lemma \ref{morfismos entre T_p y T_q[1]} we obtain  $\Hom(T_i,T_k)=\Hom(T_i,T_{k'}[1])=0$.\\

  \item [(c)] \underline{Two diagonals are of type $1$ and the other one is of   type $2$ (or $3$)}: We have the following sub-cases.\\

\begin{itemize}
  \item [1.] $d_i$ and $d_j$ are of type $1$ and $d_k$ is of type $2$ (or $3$):
   Let $d_i\cap d_j=\{x\}$. Assume  $x$ belongs to the outer polygon. The existence of a morphism $d_j\rightarrow d_k$ with  $d_k $ a diagonal of type $2$, implies $d_j\cap d_k=\{y\}$ with $y$ also in the outer polygon. Since $d_j$ is a  diagonal of type $1$, it has just one vertex in the outer polygon,  and thus  $x=y$ and the vertex $x$ is the source or the target  of $d_k$. Consequently  $d_i\cap d_j \cap d_k=\{x\}$ and the three  diagonals cannot be  part of the same   $\m$-gon. Therefore $x$ belongs to the inner  polygon, both diagonals of type $1$ have target  $x$ and the   diagonals $d_j$ and $d_k$ share the vertex  $y$ from the outer polygon. Hence $d_j\in d_i^{dr}$, that is,   $T_j\in T_i^{dr}$. Without loss of generality, since $T_i$ belongs to the  transjective component $\mathcal{S}^0$ we can assume that $T_i=P[0]$( $=P$ by abuse of notation) with $P$   projective in   mod $H$ (for, otherwise, we  apply  $\tau$ as many times as necessary to $T_i$ until we find a projective). Then,  $T_i\in P_p^{ur}\cup P_p^{dr}$ where $P_p$ is the projective associated with the vertex $p$ of the quiver.\\
   If  $T_i\in P_p^{dr} $ then  $T_i=P_a$ with  $a\in \{1,\cdots,p\}$ and $T_j=P_b$ with $0\leq b<a<p$. Since there is a nonzero  morphism $T_j\rightarrow T_k$, lemma \ref{elementos del tubo q tienen morf desde un proy} implies  $T_k\in S_b^{dl}$. If the  composition $T_i\rightarrow T_k$ is not zero, the same lemma applied to $T_i$ gives $T_k\in S_a^{dl}$. Clearly $S_a^{dl}\neq S_b^{dl}$ and consequently $\Hom(T_i,T_k)=0$.

  If instead  $T_i\in P_p^{ur}\setminus \{ P_p \}$, $T_i=P_{p+z}$    (with $z\in \{1, \cdots, q-1\}$). As  $T_j\in T_i^{dr}$ we have  $T_j=\tau^{-r}P_{p+z+r}$ with $r>0$ and $1\leq z \leq q-1$.  If the composition $T_i\rightarrow T_j\rightarrow T_k$ is not zero, then  by lemma \ref{elementos del tubo q tienen morf desde un proy}, we obtain $T_k\in M_0^{dl}$.  Since $0\neq \Hom(T_j,T_k)=\Hom(\tau ^{-r}P_{p+z+r}, T_k)\cong \Hom(P_{p+z+r},\tau^rT_k)$  the same lemma gives  $T_k\in \tau^{-r}M_0^{dl}=M_r^{dl}$. Then, $M_r^{dl}=M_0^{dl}$ which implies  $r\equiv 0$ (mod $p$) and equivalently $r=kp$ with $k\in \mathbb{Z}$. The diagonal $d_i$ is in bijection with the projective $P_{p+z}$ and the diagonal $d_j$ is  in  bijection with $\tau ^{-r}P_{p+z+r}$, then $d_i$ is a  diagonal between the vertices $O_0$ and $I_x$  and  $d_j=\tau^{-r}d$ with  $d$ a diagonal between the vertices $O_0$ and $I_y$  and   $y> x$.  Then $d_j=\tau^{-r}d=d[-rm]=d[-kpm]$  and consequently $d_j$ is a diagonal  between the vertices $O_0$ and $I_y$ and it is not possible to have a diagonal $d_k$ of type $2$ between both.\\

  \item [2.] $d_i$ is of type $2$ (or $3$) and  $d_k$ and  $d_j$ are of type $1$: It is the case dual to the one considered in 1 above. \\

  \item [3.] $d_i$ and $d_k$ are both of type $1$ and $d_j$ is of  type $2$ (or $3$): Suppose $d_i\cap d_j=\{x\}$ and $d_j\cap d_k=\{y\}$ with $x,y$ in the outer  polygon  in such a way that  $s(d_j)=y$ and $t(d_j)=x$. In particular  $x\neq y$ and $x=y+km+1$ (mod $mp$) with $k\geq 1$.  Assume $d_i\in S^r$, then  $x\equiv m-r$ (mod $m$) and hence  $y\equiv x-1\equiv m-(r+1)$ (mod $m$). Thus  $d_k\in S^{r+1}$ and $d_j\in T^{r}_p$. Since $d_i\in S^r$ and $d_k\in S^{r+1}$  there are  $s,s'\in \mathbb{Z}$ and $P,P'$ projective objects (in  mod $H$) such that  $T_i=\tau^s P[r]$  and  $T_k=\tau^{s'}P'[r+1]$. Then:

      \begin{eqnarray*}
      \Hom(T_i,T_k) & = & \Hom(\tau^s P[r],\tau^{s'}P'[r+1])\\
      & \cong  & \Hom(\tau^s P,\tau^{s'}P'[1])\\
      & = & \Hom(P, \tau^{s-s'}P'[1])=0
      \end{eqnarray*}

by lemma \ref{morfismos entre P' y algun tau de P[1]}.\\

\end{itemize}

  \item [(d)] \underline{Two diagonals are of type $2$ (or $3$) and the third one is of  type $1$}: \  \
       First of all, let us  see that $d_j$ cannot be the diagonal of type $1$. If this is the case, since there are arrows $d_i\rightarrow d_j$  and $d_j\rightarrow d_k$ the three  diagonals have to share a vertex of the outer polygon (inner if $d_i,d_k$ are of type $3$), but in this situation it is not possible that the three diagonals  delimit the same $\m$-gon.   Then, assume $d_i$ and $d_j$ are of type $2$ and $d_k$ is of type $1$. If   $s(d_i)=s(d_j)$ the possible  diagonals $d_k$ of type $1$ either have an intersection with  $d_i\cup d_j$ or do not delimit the same  $\m$-gon. (See figure 2).

       \begin{figure}[H]\label{caso d}
       \begin{center}
       \hspace*{2cm}\includegraphics[scale=.6]{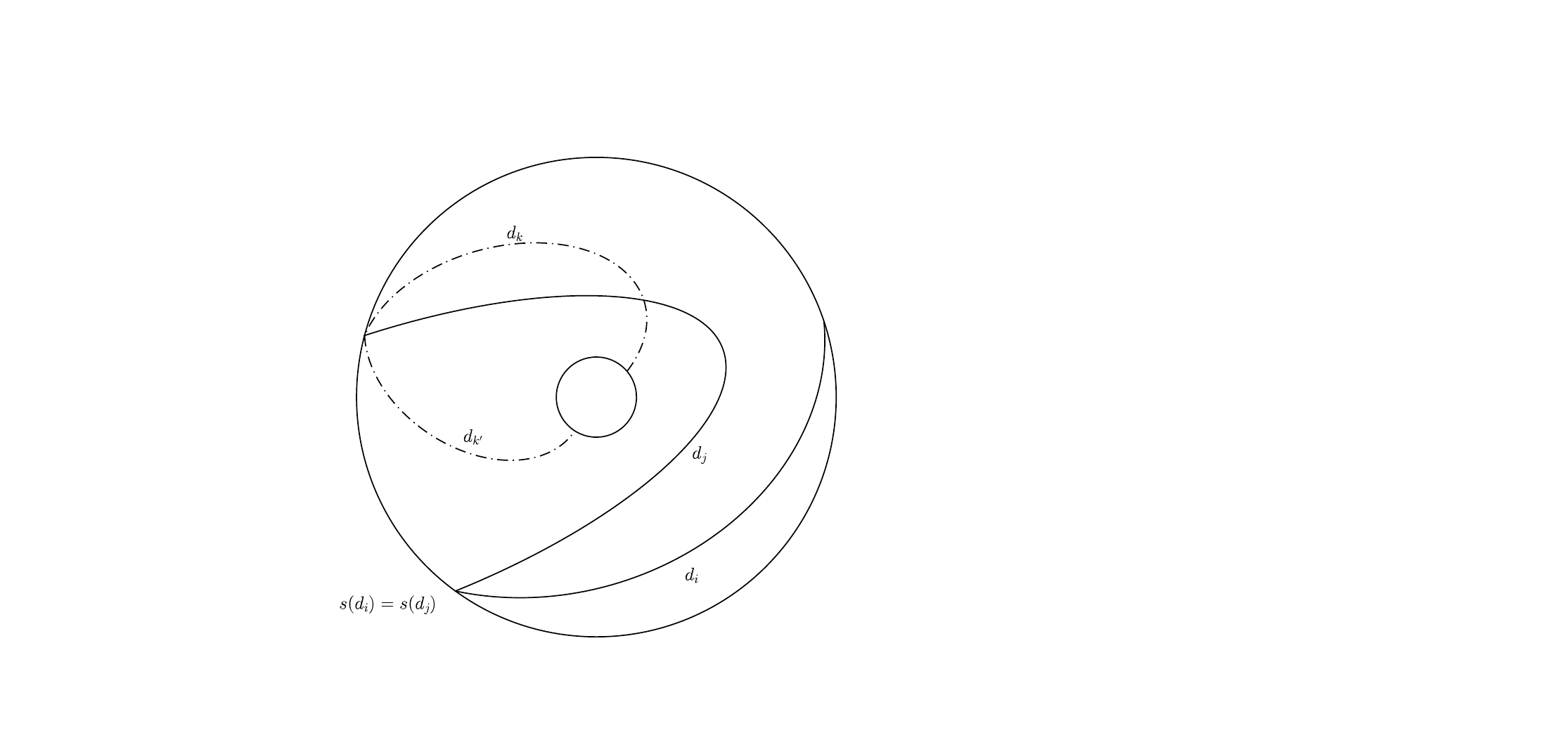}
       \vspace*{-1cm}
       \caption{The possible $d_k$ if  $s(d_i)=s(d_j)$. }
       \end{center}
       \end{figure}

      Hence,  $t(d_j)=s(d_i)$ and if $d_j\in T^r$ then $d_i\in T^{r-1}$ by lemma \ref{final de una diagonal comienzo de la otra}. In addition  $d_k\in S^{r+1}$.  If $m>2$ let $d_k'\in S^{r-1}$  be a diagonal such that  $d_k=d_k'[2]$. Consequently, lemma \ref{morfismos entre componente C y componente C[i] con i>1 son cero} implies $\Hom(T_i,T_k)\cong \Hom(T_i,T_k'[2])=0$. If $m=2$ we can assume $d_i\in T^0$, $d_j\in T^1$ and $d_k\in S^0$. Thus

      \begin{eqnarray*}
      \Hom(T_i,T_k) & =  &  \bigoplus_{j\in \mathbb{Z}} \Ho_{D^b(H)}(T_i,F^jT_k)
       =  \bigoplus_{j\in \mathbb{Z}} \Ho_{D^b(H)}(T_i,\tau^{-j}T_k[2j])
      \end{eqnarray*}

  All terms $\Ho_{D^b(H)}(T_i,\tau^{-j}T_k[2j])$ with $j< 0$ are zero because the objects $T_i$ and  $\tau^{-j}T_k$ are in (mod $H$ )$[0]$ and there are no backward morphisms in the derived category $ D^b$ (mod $H$). If $2j\geq2$ all terms $\Ho_{D^b(H)}(T_i,\tau^{-j}T_k[2j])$ are zero since $H$ is hereditary. Finally, if $j=0$ the term $\Ho_{D^b(H)}(T_i,T_k)=0$ because there are no  morphisms between an  object in the tube $\mathcal{T}^0$ and an object in the component  $\mathcal{S}^0$. In consequence,  $\Hom(T_i,T_k) =0$.\\

\end{itemize}

So far we have seen that all the relations in the ideal  $I_{\Delta}$ are also relations in   $\End(F(\D))$.  To finish,  we still have to prove that there are no  other possible relations in   $\End(F(\D))$. \\

%
%
%
%
%

We begin by proving  there are not  relations of (minimal) length  greater than or equal to three. Let ($A\stackrel{\alpha}{\rightarrow} B \stackrel{\beta} {\rightsquigarrow} C\stackrel {\gamma}{\rightarrow} D$) be a relation  of length $r\geq 3$, where $\gamma\beta\alpha=0$ . The arrows
($A\rightarrow B$) and ($C\rightarrow D$) are of length 1 and the path
 ($B\rightsquigarrow C$) can have arbitrary length. In addition, suppose that the compositions
  ($A\rightarrow B \rightsquigarrow C $) and ($B \rightsquigarrow C \rightarrow D$) are nonzero. Let $d_*$  be the $m$-diagonal such that $F(d_*)=*$ with $*\in \{A,B,C,D\}$. We divide the proof into  5 cases.\\

\begin{itemize}
  \item [(a')] \underline{The 4 diagonals are of type $1$}. By lemma \ref{morfismos entre P' y algun tau de P[1]} there are no nonzero morphisms between the components $S^d$ and $S^{d+1}$. In consequence the four  diagonals must be in the same component  $S^d$ and thus in the ray $d_A^{ \ ur}$ or the ray $d_A^{ \ dr}$. Hence, the composition ($A\stackrel{\alpha}{\rightarrow} B \stackrel{\beta} {\rightsquigarrow} C\stackrel {\gamma}{\rightarrow} D$) cannot be zero, contrary to our assumption. \\

  \item [(b')] \underline{Three diagonals are of type $1$ and one is of type $2$ (or 3)}. By the argument given in the case above, the three  diagonals of type $1$ have to be in the same component $S^d$. Thus, we can assume that $d_A, d_B$ and $d_C$ are of type $1$ in $S^0$ and $d_D$ is of type $2$ in a  tube. Since the morphism ($C\rightarrow D$) is not zero, the diagonal $d_D$ shares with $d_C$ the vertex in the outer polygon. In consequence, $d_D\in T^0$ and all objects are in the same degree $0$. Hence, we can think like in the case $m=1$ and since the  $1$-cluster tilted algebras of type $\widetilde{\mathbb{A}}$ are gentle, we cannot have the  relation ($A\stackrel{\alpha}{\rightarrow} B \stackrel{\beta} {\rightsquigarrow} C\stackrel {\gamma}{\rightarrow} D )=0$.\\

  \item [(c')] \underline{Two diagonals are of type $2$ (or 3) and two are of type $1$}. Lemma \ref{morfismos entre P' y algun tau de P[1]} implies both diagonals of type $1$ are consecutive in the same component $S^d$ or one diagonal is $d_A$ and the other one is  $d_D$. \\
       To begin, suppose we are in the first case. Then, we have the following sub-cases:

      \begin{itemize}
        \item [1.] If $d_A$ and $d_B$ are in $S^0$ and $d_C$ and $d_D$ are in the same tube $T^0$, we can argue as in the case $m=1$ and conclude that it is not possible. If instead, $d_C$ is in the tube $T^0$ and $d_D$ in the tube $T^1$, the calculation done in  $(d)$ above shows that ($B \stackrel{\beta} {\rightsquigarrow} C\stackrel {\gamma}{\rightarrow} D) =0$, a contradiction.\\

        \item [2.] If $d_C$ and $d_D$ are in  $S^1$, we have the  case dual to the one considered above .\\

        \item  [3.] If $d_B$ and $d_C$ are in $S^0$ the diagonal $d_D$ has to be in  $T^1$. Hence, the computation done in  $(c)$ above implies  that $(B \stackrel{\beta} {\rightsquigarrow} C\stackrel {\gamma}{\rightarrow} D)=0$, contrary to our assumption.\\
      \end{itemize}

       Finally, suppose that $d_A$ and $d_D$ are the diagonals of type $1$. Then, we have two sub-cases:\\

       \begin{itemize}
         \item [1.] If $d_B$ and $d_C$ are in the same tube we can assume that the objects $B,C$  belong to the tube  $\mathcal{T}^{d}$ and the object $D$ belongs to the component $\mathcal{S}^{d+1}$. Suppose that $A=P_a$ and $D=\tau^r P_b[1]$. If
$(A\rightarrow B \rightsquigarrow C)=0$ we have a contradiction; otherwise, by lemma
\ref{elementos del tubo q tienen morf desde un proy}  we deduce that $B,C\in M_*^{dl}$ with $$*=\left\{
                     \begin{array}{ll}
                       p-a, & \hbox{ if $a\in\{1,\cdots, p-1\}$;} \\
                       0, & \hbox{otherwise.}
                     \end{array}
                   \right.
.$$ Since $(C\rightarrow
D)\neq 0$  lemma \ref{tubo tauideproy[1]}  implies $C\in
\tau^{r-1}M_\sharp^{dr}=M_{\sharp+r-1}^{dr}$ with $$\sharp=\left\{
                     \begin{array}{ll}
                       p-b, & \hbox{if $b\in\{1,\cdots, p-1\}$;} \\
                       0, & \hbox{otherwise.}
                     \end{array}
                   \right.
$$ Hence $\{C\}=M_{\sharp+r-1}^{dr}
\cap M_*^{dl}$. If $(B \rightarrow D)$ is not zero we would have
  $B\in M_{\sharp+r-1}^{dr} \cap M_*^{dl}= \{C\}$,
which is impossible.\\

         \item [2.] If $d_B$ and $d_C$ are in different tubes, the computation done in $(d)$ above shows that $(B \stackrel{\beta} {\rightsquigarrow} C\stackrel {\gamma}{\rightarrow} D)=0$, a contradiction.\\

       \end{itemize}

  \item [(d')] \underline{Three diagonals are of type $2$ (or 3) and one is of type $1$ }. We have the following sub-cases.

      \begin{itemize}
        \item [1.] Suppose that  $d_A$ and $d_B$ are in  $T^0$, $d_C$ is in $S^1$ and $d_D$ in $T^1$. Then we can assume that $C= \tau^r P_c[1]$ and since
$(A \rightarrow B \rightarrow C)\neq 0$,  lemma \ref{tubo tauideproy[1]} implies
$A,B\in M_{*-1+r}^{dr}$  with $$*=\left\{
                     \begin{array}{ll}
                       p-c, & \hbox{if $c\in\{1,\cdots, p-1\}$;} \\
                       0, & \hbox{otherwise.}
                     \end{array}
                   \right.
$$

 If $(B\rightsquigarrow C \rightarrow D)\neq 0$, then  the level of  $B$ is not $p-1$ and thus   $\alpha(B)\neq 0$. Lemma \ref{morfismos entre un tubo y el siguiente}  now yields  $D\in (\mathcal{B}^-(\tau
B)[1])^{dl}=(\tau \mathcal{B}^-(B)[1])^{dl}=(\tau M_{*-1+r}[1])^{dl}=(M_{*-1+r-1}[1])^{dl}$.
On the other hand, since $(C\rightarrow D)\neq 0$, we have  $\tau^{-r}D\in (M_{*}[1])^{dl}$ and consequently
$D\in (M_{*+r}[1])^{dl}\cap (M_{*+r-2}[1])^{dl} =\emptyset $ except if  $p=2$. But, for  $p=2$,  this  is not  possible, because we cannot have two factors of  $T$ in the mouth of the tube.\\

        \item  [2.] Assume that $d_A$ and $d_B$ are in $T^0$, $d_C$ in $T^1$ and  $d_D$ in $S^2$. Since the morphism $(C\rightarrow D)$ is not zero, the diagonals $d_D$ and $d_C$ share the source of $d_C$. Thus, the calculation done in  $(d)$ above gives $(B \stackrel{\beta} {\rightsquigarrow} C\stackrel {\gamma}{\rightarrow} D)=0$, a contradiction.\\

        \item [3.] Assume that the three diagonals of type $2$ are one in the tube $T^d$ and the others two in the tube $T^{d+1}$. Then, the computation done   in $(a)-1$ above gives that the composition of the morphisms between these three diagonals is zero, a contradiction.\\

        \item [4.] Suppose that each diagonal of type $2$ is in a  different tube. Then, for example,  $d_A\in T^0$, $d_B\in T^1$ and $d_C\in T^2$. Now  the calculation done in  $(a)-2$, shows that $(A\rightarrow B \rightsquigarrow C) =0$, which contradicts our assumption.\\

        \item [5.] Suppose that $d_A$  belongs to $T^0$, $d_B$  belongs to $T^1$, $d_C$  to $S^2$ and $d_D$ to $T^2$. Then, the three diagonals $d_A,d_B$ and $d_C$ are in the situation of the case $(d)$ above and, in  consequence, the composition $(A\rightarrow B \rightsquigarrow C) =0$, a contradiction.\\

        \item [6.] Suppose that the three  diagonals $d_A,d_B$ and $d_C$ are in the  tube $T^0$ and $d_D$ is in the component $S^1$. Then, we can think like  in the case $m=1$ because all the diagonals are in the same degree. Since we know that for $m=1$ there are not relations of length greater than or equal to $3$ we finish.\\

      \end{itemize}

  \item [(e')] \underline{The four diagonals are of type $2$ (or $3$)}. Let us see the  possibles sub-cases which are not among the ones considered  in $(d')$ for the three diagonals of type $2$.

      \begin{itemize}
        \item [1.] $d_A$ and $d_B$ are in the tube $T^0$ and $d_C$ and $d_D$ are in the tube $T^1$. Then, we have two options for the objects $C$ and $D$. Either $D\in C^{ur}$, or $D\in C^{dr}$. Suppose that  $D\in C^{ur}$. Let $C'$ and $D'$ be objects such that $C'[1]=C$ and $D'[1]=D$, then $D'\in C'^{ur}$. Since ($A\rightarrow C$) and ( $B\rightarrow D$) are nonzero, the $(m+1)$-CY property of $\C$ gives that $0\neq \Hom(A,C'[1])\cong D \Hom(C',\tau A)$ and $0\neq \Hom(B,D'[1])\cong D \Hom(D',\tau B)$. Thus, $C'\in (\tau A)^-$ and $D'\in (\tau B)^-$. The same computation applied to the zero  morphism ($A\rightarrow D$) gives $D'\in ((\tau A)^-)^c$. On the other hand, since there is a nonzero morphism ($A\rightarrow B$) the sources of $d_A$ and $d_B$ must be the same vertex. Hence, $B\in A^{dr}$ and equivalently $\tau B\in (\tau A)^{dr}$. In consequence, $D'\in C'^{ur}\cap ((\tau B)^-\setminus (\tau A)^-)=\emptyset$, which is not possible. To conclude, assume that $D\in C^{dr}$, then $s(d_C)=s(d_D)$ and the computation done in the previous part $(a)$ yields that $(B \rightsquigarrow C \rightarrow D)=0$. \\

        \item [2.]  $d_A$, $d_B$  and $d_C$ are in the tube $T^0$ and $d_D$ is in the tube $T^1$. Note that $A,B$ and $C$ are in the same ray, that is,  the ray  $C^{dl}$ or $A^{dr}$. If $A,B$ and $C$ belong to the ray
$C^{dl}$ then $\mathcal{B}^-(\tau C)\neq \mathcal{B}^-(\tau B)$
and since  $(C\rightarrow D)\neq 0$, $D\in (\mathcal{B}^-(\tau
C)[1])^{dl}\neq (\mathcal{B}^-(\tau B)[1])^{dl} $. In consequence,  lemma \ref{morfismos entre un tubo y el siguiente} implies that $(B\rightarrow D)=0$.

If instead  $A,B$ and $C$ belong to the ray  $A^{dr}$ we have that
$\mathcal{B}^-(\tau C)=\mathcal{B}^-(\tau A)$. In addition, since we have the morphisms  $A\rightarrow B \rightarrow C$ in the same ray  the level of  $A$ cannot be $p-1$, and then $\alpha(C)< \alpha(A)\neq 0$.   Since  $(C\rightarrow D)\neq 0$ we have   $D\in \bigl((\mathcal{B}^-(\tau C)[1])^{dl}\bigr)_{\leq \alpha(C)} \ \subset
\bigl ((\mathcal{B}^-(\tau A)[1])^{dl}\bigr)_{\leq \alpha(A)}$. In consequence,  lemma \ref{morfismos entre un tubo y el siguiente} implies that   $(A\rightsquigarrow D)\neq 0$, a contradiction.\\

      \end{itemize}

\end{itemize}

Note that in all cases except the case (a'), the two last sub-cases of (c') and the sub-case  (e')-2 we did not really have to use that the composition $A\rightarrow B\rightsquigarrow C \rightarrow D$ is zero and we have actually proved that one of the  compositions $A\rightarrow B\rightsquigarrow C$ or $B\rightsquigarrow C \rightarrow D$ is zero.\\

Next,  we claim that there are no relations of the form

\[\xymatrix@R=0.4pc@C=4pc{
& B_1 \ar[r]^{g_1} & C_1\ar[ddr]^{   f_1}&\\
& B_2 \ar[r]^{g_2} & C_2\ar[dr]^{f_2 \hspace{.5cm}}&\\
A \ar[uur]^{f_1'}\ar[ur]^{ \hspace{.6cm}   f_2'}\ar[dr]^{f_t'}&\vdots & \vdots & D\\
& B_t \ar[r]^{g_t} & C_t\ar[ur]^{f_t}&\\
}\]

where a linear combination of  compositions  $f_i g_i f_i'$ is $0$,   $t\geq 2$ and the paths $g_i$ can have arbitrary lengh. Since the idea is to prove that all the compositions $f_i g_i f_i'$ except at most one are zero,  we can assume  $t=2$.
The last remark  implies that if the diagonals $d_A,d_{B_1},d_{C_1}$ and $d_D$  are not in the three cases considered above we cannot have such a relation. Then, it just remains to see these three cases.\


\begin{itemize}

  \item [(i)] \underline{Case (a')}. In this case the diagonals $d_A,d_{B_1},d_{C_1}$ and $d_D$ are of type $1$. By lemma \ref{morfismos entre P' y algun tau de P[1]} every morphism from $S^d$ to $S^{d+1}$ is zero. In consequence the four diagonals have to be in the same component $S^d$ and so in the ray $d_A^{ \ ur}$ or the ray $d_A^{ \ dr}$. The same argument applied to the  diagonals $d_A,d_{B_2},d_{C_2}$ and $d_D$ implies that the  four diagonals have to be in the ray $d_A^{ \ ur}$ or the ray $d_A^{ \ dr}$. Thus, $D$ must be the last  object in the two rays  $A^{ ur}$ and  $A^{dr}$ and we can assume that $A=P_p$ and $D=P_0$. However, if  this is the case then we cannot have the required relation.\\


  \item [(ii)] \underline{The two last sub-cases of case (c')}. Since the composition $A\rightarrow D$ cannot be zero, we do not have these sub-cases.\\

  \item  [(iii)] \underline{Case (e')-2}. In this case $d_A$, $d_{B_1}$  and $d_{C_1}$ are in the tube $T^0$ and $d_D$ is in the tube $T^1$. We can assume that $A, B_1$ and  $C_1$ are in the ray $A^{ur}$.  Then, $B_2$ and $C_2$ have to lie on the ray $A^{dr}$. Hence, the computation done in   (e')-2 shows that the  composition $B_1\rightarrow D$ is zero, which is impossible. \\
\end{itemize}

Finally, there only remains to see that we cannot have relations of the  form

\[\xymatrix@R=0.4pc@C=5pc{
& B_1\ar[ddr]^{f_1}&\\
& B_2\ar[dr]_{f_2}& \\
A \ar[uur]^{f_1'}\ar[ur]_{f_2'}\ar[dr]_{f_t'}& \vdots & D\\
& B_t\ar[ur]_{f_t}&\\
}\]

where the sum of the compositions  $f_i f_i'$ has to be $0$ and $t\geq 2$.  The idea is to show that one of the  compositions $f_i f_i'$ has to be zero. Then, we can assume that  $t=2$.

%

 At first, we observe that if the four  factors $A,B_1,B_2$ and $D$ of $F(\D)$ are in the same component $\mathcal{S}^i$ the computation done  in  $(i)$ implies that we cannot have a commutativity relation. Next, observe that if  $A\in \mathcal{S}^0$ and   $B_1,B_2,D\in \mathcal{T}^0_p$ the three objects $B_1,B_2,D$ have to be in the same ray and consequently the endomorphisms algebra of  $F(\D)$ does not contains the required   relation. Finally, since every morphism from    $S^0$ to  $S^1$ is zero, the only possibility is to have $A,B_1,B_2\in \mathcal{S}^0$ and $D\in \mathcal{T}^0_p$. But then we can think like in the case  $m=1$  where we  cannot have such a relation. See \cite[Proposition 3.1]{Murphy2010}.\\

 Summarizing,  we have seen that the only possible relations in the  algebra $\End(F(\D))$ are the relations given by the ideal  $I_{\Delta}$.
\end{proof}

%
%
%
%
%
%
%
%
%
%
%
%
%
%

\section{The  $m$-cluster tilted algebras of type $\tilde{\mathbb{A}}$}

The aim of this section  is to show that  $m$-cluster tilted algebras of type $\tilde{\mathbb{A}}$ are gentle and to find a  characterization of their bound quivers.
Murphy in  \cite{Murphy2010} showed that   $m$-cluster tilted algebras of type $\mathbb{A}$ are gentle for any   $m\geq 1$. We want to prove the same for the  type  $\tilde{\mathbb{A}}$. For $m=1$   the result was showed in  \cite{ABCP09}. For any $m\geq 2$ is a easily consequence of proposition \ref{el carcaj con relaciones de End(F(delta))}.

\begin{prop}\label{son amables} The $m$-cluster-tilted algebras of type $\tilde{\mathbb{A}}$    are gentle for any $m\geq 2$.
\end{prop}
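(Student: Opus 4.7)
The plan is to deduce gentleness directly from the combinatorial description of $\End(F(\Delta))$ obtained in Proposition~\ref{el carcaj con relaciones de End(F(delta))}. By that result, it suffices to show that for every $(m+2)$-angulation $\Delta$ of $P_{p,q,m}$, the bound quiver $(Q_\Delta^0, I_\Delta)$ is gentle, since every (connected component of an) $m$-cluster tilted algebra of type $\widetilde{\mathbb{A}}$ arises this way. The relations of $I_\Delta$ are, by Definition~\ref{definicion de I_Delta}, monomial quadratic by construction, so the only thing requiring verification is the three combinatorial axioms G1, G2, G3 on the local structure of $Q_\Delta^0$ around each vertex.

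First I would check G1. A vertex of $Q_\Delta^0$ is an $m$-diagonal $d\in\Delta$, and $d$ is a side of at most two $(m+2)$-gons of the angulation (one on each side). In each of these $(m+2)$-gons, $d$ shares each of its two endpoints with exactly one other diagonal of $\Delta$ (the next diagonal in the $(m+2)$-gon clockwise, resp.\ counterclockwise, from $d$). Unwinding the definition of the colour $0$ subquiver $Q_\Delta^0$, this translates into at most two arrows starting at $d$ and at most two arrows ending at $d$, giving G1.

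Next I would check G2 and G3 simultaneously, using the key observation that the relations in $I_\Delta$ are generated exactly by the compositions of two arrows whose three vertices lie in a single $(m+2)$-gon. Fix an arrow $\alpha\colon d\to d'$ in $Q_\Delta^0$. It arises from a specific $(m+2)$-gon $P$ containing both $d$ and $d'$, and $d$, $d'$ share a unique vertex $x$ of $P_{p,q,m}$ lying on $P$. The arrows $\beta\colon d'\to d''$ split into two classes according to which of the two $(m+2)$-gons adjacent to $d'$ they come from: exactly one such arrow is internal to $P$ (and hence $\beta\alpha\in I_\Delta$), and at most one comes from the other $(m+2)$-gon adjacent to $d'$ (and hence $\beta\alpha\notin I_\Delta$). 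The same analysis applies on the left by symmetry, which yields both G2 and G3.

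The main obstacle is a careful bookkeeping argument at those diagonals which border an $(m+2)$-gon on only one side (i.e.\ when one of the two ``$(m+2)$-gons adjacent to $d'$'' is actually just a portion of the boundary of $P_{p,q,m}$), and at diagonals where two incoming/outgoing arrows coincide due to the angulation of the polygon; in each such configuration one needs to confirm that the count in the previous paragraph still yields \emph{at most} one $\beta$ in each of the two classes, never two. Once this is verified, combined with the fact that $I_\Delta$ is generated by quadratic monomial relations, all three conditions G1, G2, G3 hold, and the algebra $kQ_\Delta^0/I_\Delta\cong \End(F(\Delta))$ is gentle.
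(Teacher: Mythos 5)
Your proposal follows essentially the same route as the paper: both reduce gentleness to the combinatorial bound quiver $(Q_{\D}^0, I_{\D})$ via Proposition \ref{el carcaj con relaciones de End(F(delta))}, observe that $I_{\D}$ is monomial quadratic by Definition \ref{definicion de I_Delta}, and verify G1--G3 by a local case analysis of how the $m$-diagonals of an $\m$-angulation meet around a shared endpoint (the paper carries out this case analysis by exhibiting the possible configurations in figures). The only slip is that ``exactly one arrow $\beta\colon d'\to d''$ internal to $P$'' should be ``at most one,'' since the side of $P$ adjacent to $d'$ may be a boundary edge of $\P$ rather than a diagonal; you already flag this bookkeeping issue in your final paragraph, so the argument matches the paper's.
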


\begin{proof}  The result follows from considerations of the possible divisions of $\P$.  The following figures make the required properties clear.

%

\vspace*{-.2cm}

%

\vspace*{-.4cm}
 $$ \begin{array}{cc}
    \hspace*{-1.8cm} \includegraphics[scale=.6]{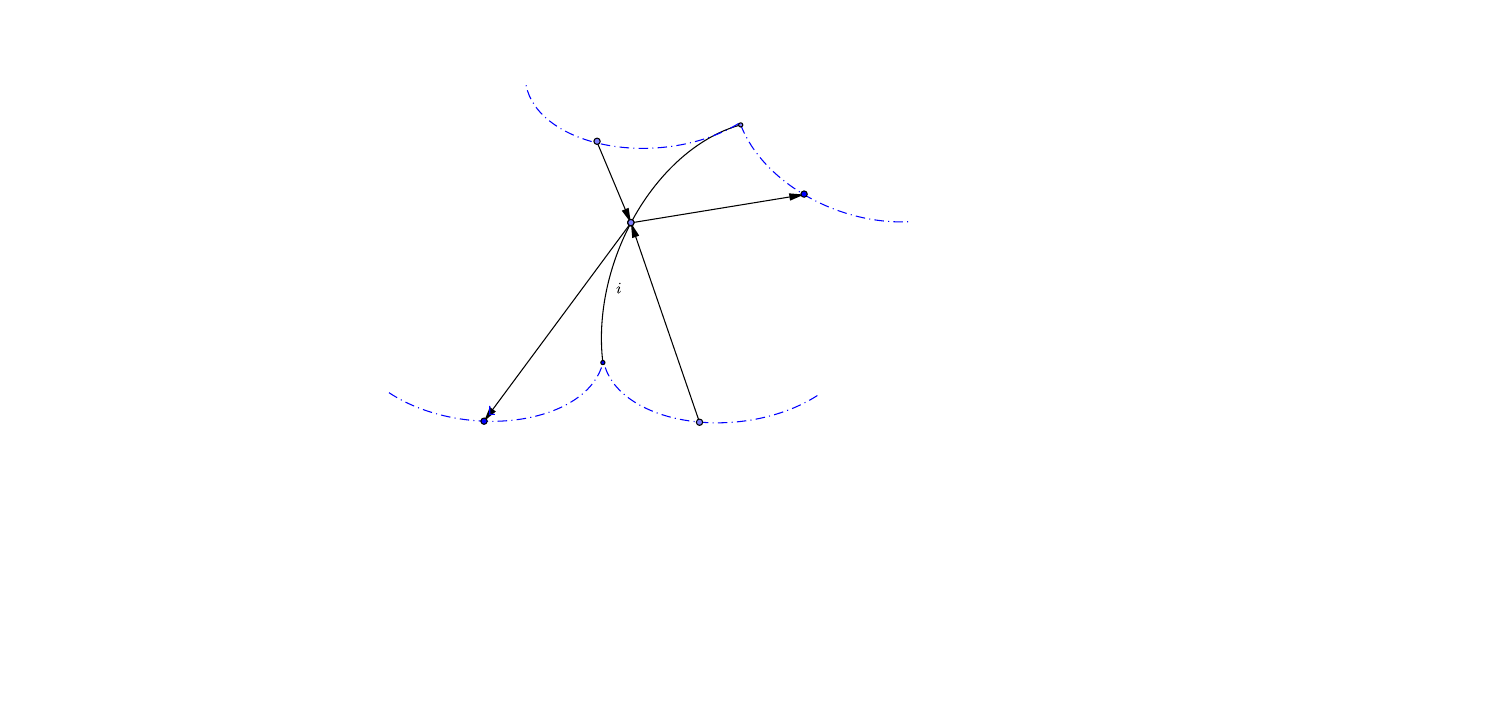} & \hspace*{-3.3cm}\includegraphics[scale=.5]{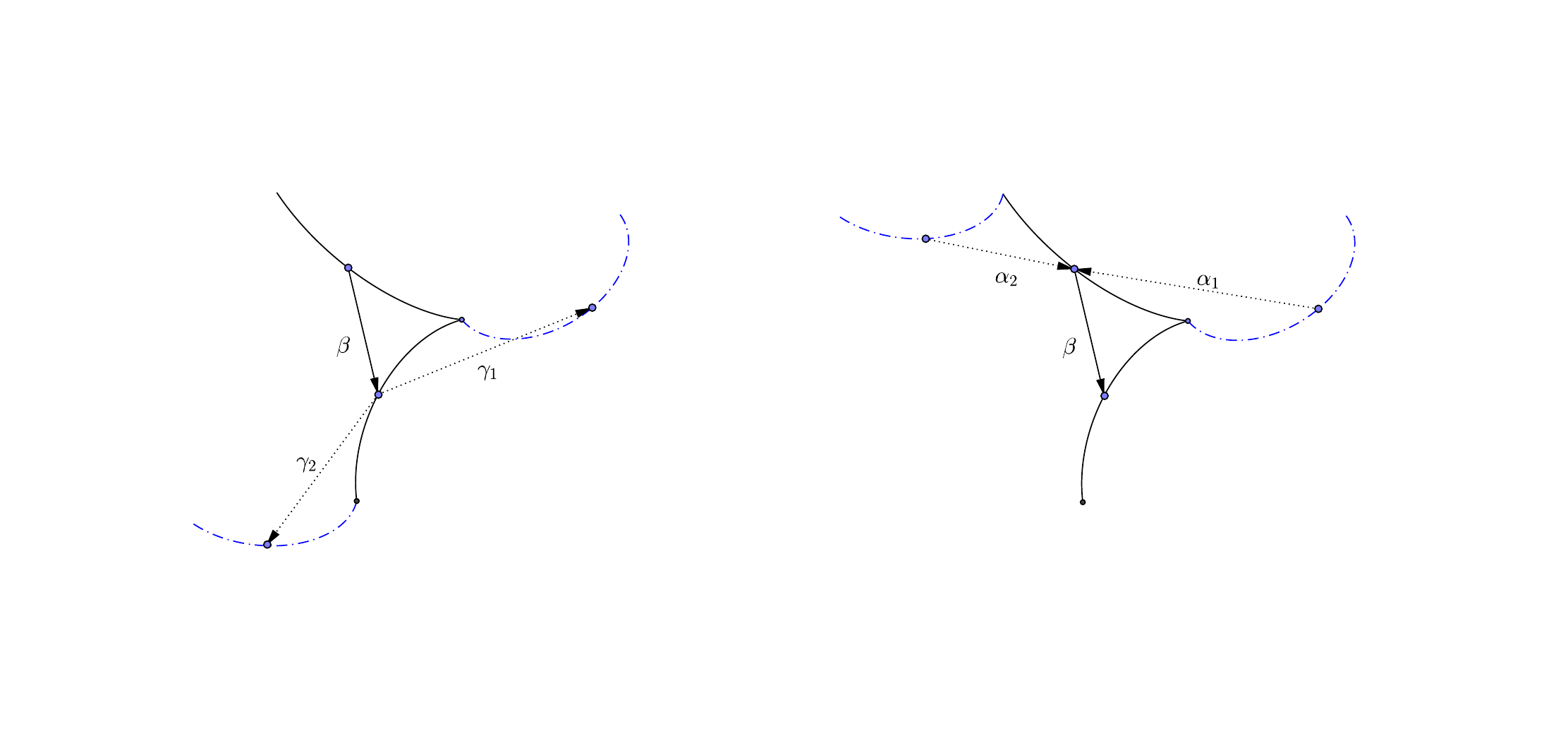} \\
\end{array}$$


%



\vspace*{-.8cm}
Finally observe that by definition,  the ideal $I_{\D}$ is generated by monomial quadratic relations.
\end{proof}


%
%
%

Given a bound quiver $(Q,I)$ and an integer $m$, a cycle is called \emph{ $m$-saturated} if it is an oriented cycle consisting of $m+2$ arrows such that the composition of any two consecutive arrows on this cycle belongs to $I$. Recall that two relations  $r$ and $r'$ in the bound quiver $(Q,I)$ are said to be   \textit{consecutive} if there is a walk  $v=wr=r'w'$ in  $(Q,I)$ such that $r$ and $r'$ point in the same direction and share an arrow.\\


For the following definition we fix a natural $m\geq2$.

\begin{defi}
 Let $\widetilde{\mathcal{C}}$ be a cycle without  relations (oriented or not) and fix an orientation of its arrows.  We say that   $A\cong \K Q/I$ is an  \textit{algebra with root $\widetilde{\mathcal{C}}$} if its bound quiver can be constructed  as follows:

\begin{enumerate}
\item We add to the  cycle $\widetilde{\mathcal{C}}$ gentle quivers in such a way that the final quiver remains gentle and connected. These gentle quivers that we add can have   cycles all of which are    $m$-saturated. We call these quivers  \textit{rays}.
\item We can add  relations to the  cycle $\widetilde{\mathcal{C}}$. If the cycle  $\widetilde{\mathcal{C}}$ is oriented  we must add at least one relation.\\
\end{enumerate}

Also, we will refer to the  cycle $\widetilde{\mathcal{C}}$ as the  \textit{root cycle}.
\end{defi}

In the sequel let  $\widetilde{\mathcal{C}}$  be a  non-saturated cycle and $A\cong \K Q/I$  an algebra with root $\widetilde{\mathcal{C}}$.

 \begin{defi}
 Let $c$ be a vertex (which is not in an  $m$-saturated cycle) in a ray. We said that $c$ is the  \textit{union vertex} of the ray, if $c$ lies also in the root cycle.
 \end{defi}

%
%
%
%
%
%
%

\begin{obss}\textnormal{Let $\widetilde{\mathcal{C}}$ be a  cycle and $A$ an algebra with root  $\widetilde{\mathcal{C}}$.}
 \begin{enumerate}
            \item \textnormal{Each ray of $A$ can share with the cycle $\widetilde{\mathcal{C}}$  at most  $m+2$ vertices. If it shares just one vertex, this vertex is the union vertex of the ray. If it shares more than one vertex, the ray and the cycle $\widetilde{\mathcal{C}}$ are connected through an  $m$-saturated cycle.}

           \item \textnormal{ For each union vertex there is at least one relation involving at least one arrow of $\mathcal{C}$. } \\ 
          \end{enumerate}

\end{obss}

\begin{defi}
Let $a$ be an union vertex and  let $\rho$ be the involved relation in the root cycle. The  relation $\rho$ is called:

   \begin{itemize}
     \item [a)] \textit{internal union relation} of the ray if both arrows of the relation belong to the root cycle.
     \item [b)] \textit{external union relation} of the ray if just one arrow of the relation belongs to the root cycle.
   \end{itemize}

\end{defi}

%
%
%
%

Now, we  assign an orientation to each union relation.  Let $\rho$ be an union relation. We say that  $\rho$ is
\textit{clockwise ( or counterclockwise)} if the involved arrows which lie in the root cycle are clockwise (or counterclockwise, respectively) oriented.


\begin{ejem}
Let $A$ be the algebra given by the following bound quiver.

\vspace*{-.4cm}
\begin{figure}[H]
\includegraphics[scale=.5]{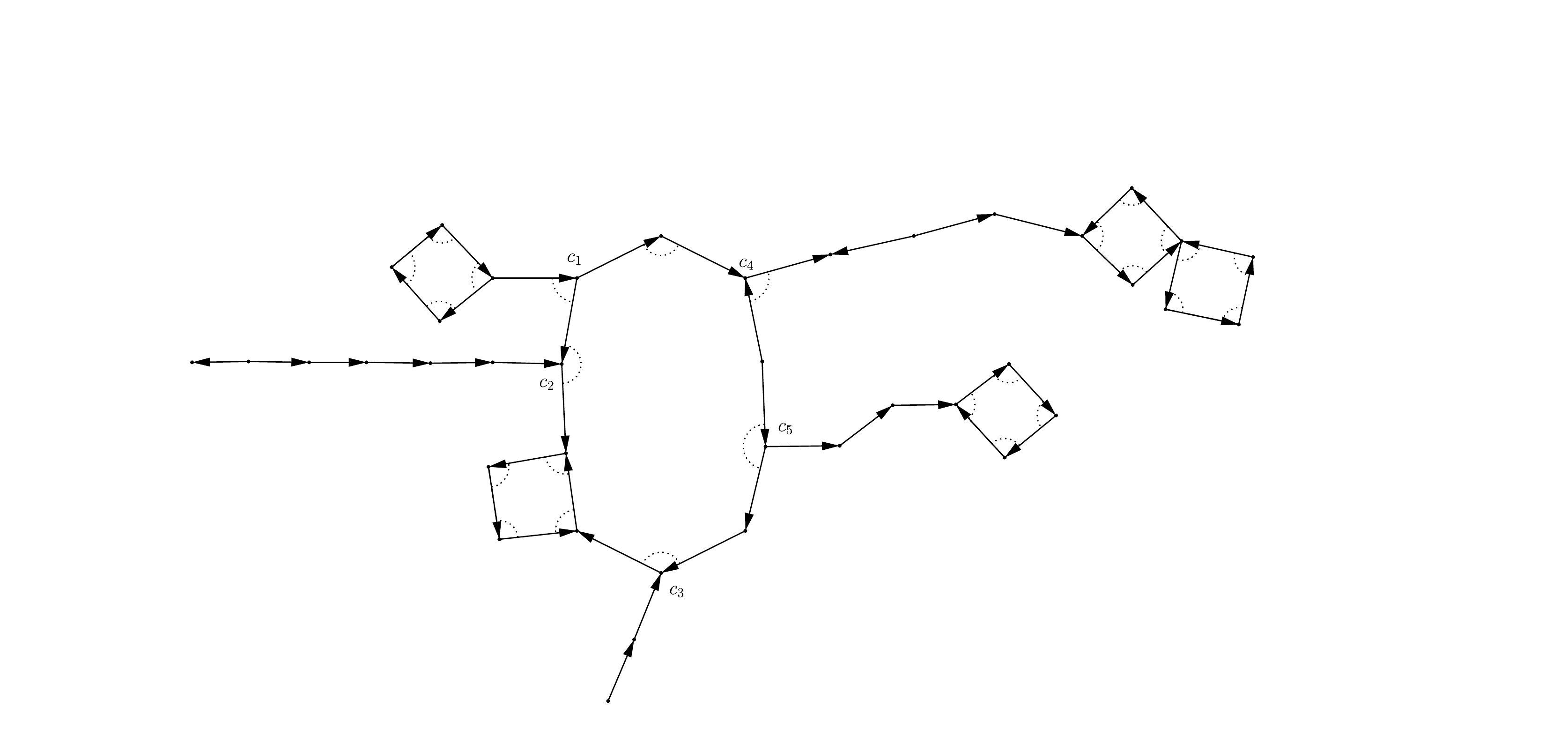}
\end{figure}

The relations at the vertices $c_1$  and $c_4$ are  counterclockwise external union relations. The one at the vertex $c_2$ is a counterclockwise internal union relation. Finally, the relations at the vertices $c_3$ and $c_5$ are clockwise internal union relations.


\end{ejem}

We collect the following easy consequences of proposition \ref{el carcaj con relaciones de End(F(delta))}.

\begin{obss}\label{no existen masde m-1 relaciones seguidas Atilde}
\textnormal{ Let $A\cong \K Q/I$ be a connected  $m$-cluster tilted algebra of type $\tilde{\mathbb{A}}$, then:}
\textnormal{\begin{enumerate}
       \item $(Q,I)$ does  not contain non-saturated  cycles or else has exactly one non-saturated  cycle $\widetilde{\mathcal{C}}$  in such a way that $A$ is an algebra with root $\widetilde{\mathcal{C}}$.
              \item In the first case  the only possible cycles are $m$-saturated.
              \item If $m\neq 1$, we can have relations outside  of  $m$-saturated cycles, but with the following restriction: we can have at most $m-1$ consecutive relations outside of an $m$-saturated cycle .
               \end{enumerate}}
\end{obss}

In particular the previous remark implies that we do not have uniqueness of type for $m$-cluster tilted algebras because every  $m$-cluster tilted algebra of type $\tilde{\mathbb{A}}$
without a non-saturated cycle  is at the same time an  $m$-cluster tilted algebras of type $\mathbb{A}$. The following example ilustrate this remark. \\

\begin{ejem}
Let $(Q,I)$ be a bound  quiver associated to the following  $4$-angulation $\D$ of  $P_{2,2,2}$.
\begin{figure}[H]
\begin{center}
\includegraphics[scale=.6]{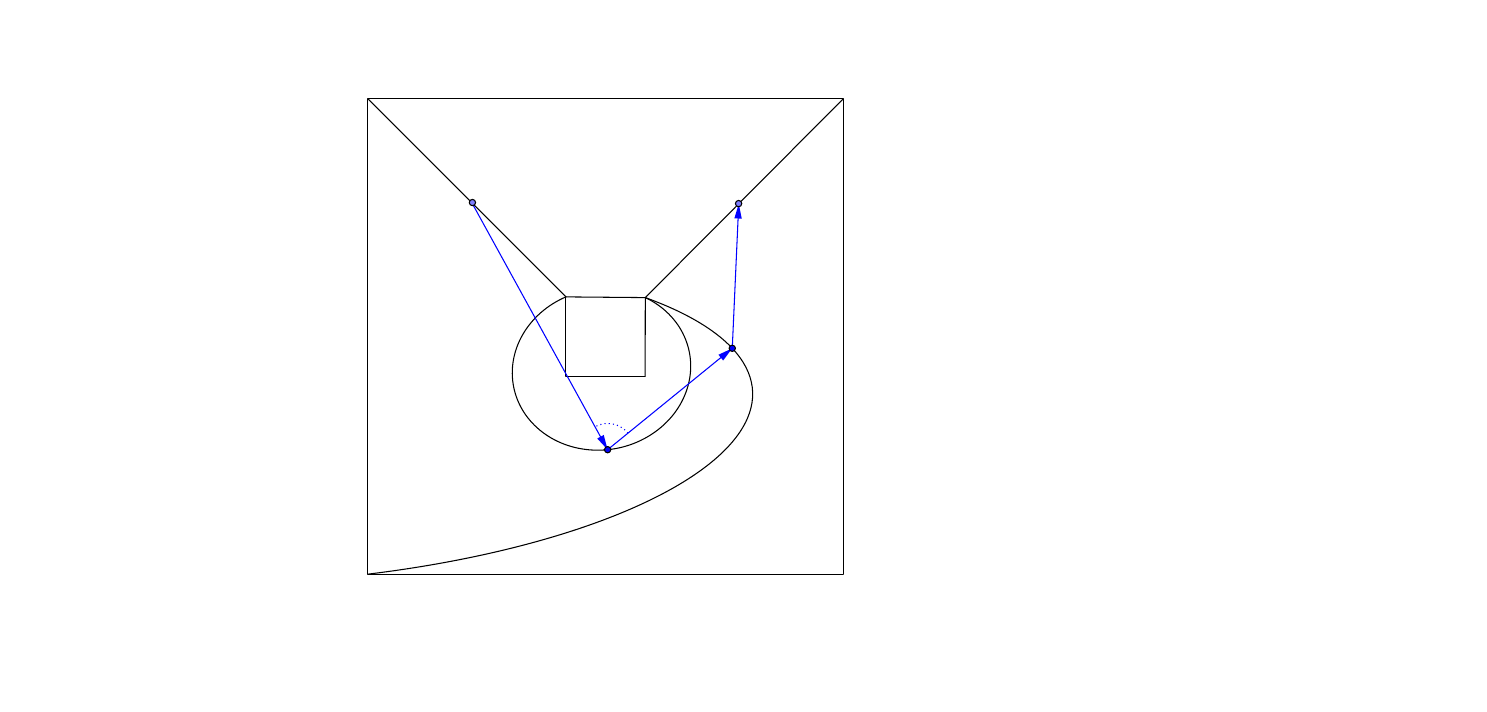}
\end{center}
\end{figure}

 Then the algebra $\K Q/I$ is a $2$-cluster tilted algebra of  type $\widetilde{\mathbb{A}}_{2,2}$ and also of  type $\mathbb{A}_{4}$.

\end{ejem}
Since  $m$-cluster tilted algebras of  type $\mathbb{A}$ are well understood, \cite{Murphy2010, BB10, GubBust} we are going to focus in $m$-cluster tilted algebras of  type $\tilde{\mathbb{A}}$ having at least one non-saturated cycle and so having (at least) a root cycle. In this case we choose one and we fix it. We start with  the relations in the root cycle.


 \begin{defi}
Let  $\rho$ be a relation in the root cycle.  The  relation $\rho$ is said     \textit{
strictly internal}  if the two arrows of the relation belong to the root cycle, but not to any   $m$-saturated cycle. In addition, we say that  the strictly internal relation is \textit{clockwise (or counterclockwise)} if both involved arrows  are  clockwise (or counterclockwise, respectively) oriented.
\end{defi}

Let $\alpha_h$  be the number of  clockwise strictly internal relations and  $\alpha_a$ the number of  counterclockwise strictly internal relations.\\

\begin{obs}
Let  $\mathcal{C}$ be an  $m$-saturated cycle sharing at least two vertices  $i$ and $j$  with the root cycle. Then, the root cycle orientation induces an orientation on the arrows of   $\mathcal{C}$. That is that there are   $k$ arrows between the vertices $i$ and $j$ clockwise oriented and $m+2-k$ arrows counterclockwise oriented.
\end{obs}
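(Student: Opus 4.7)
The plan is to argue geometrically via Torkildsen's angulation model. By Proposition~\ref{el carcaj con relaciones de End(F(delta))}, the bound quiver $(Q,I)$ is encoded by an $(m+2)$-angulation $\Delta$ of the annular polygon $P_{p,q,m}$, in which vertices of $Q$ label $m$-diagonals and the colour-$0$ arrows label consecutive pairs of sides around the $(m+2)$-gons of $\Delta$, oriented counterclockwise. Under this dictionary, the $m$-saturated cycle $\mathcal{C}$ corresponds to an interior $(m+2)$-gon $R$ whose $m+2$ sides are all $m$-diagonals; these sides are the vertices of $\mathcal{C}$ and the $m+2$ corners of $R$ furnish the arrows of $\mathcal{C}$. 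The non-saturated root cycle $\widetilde{\mathcal{C}}$ corresponds to an essential (non-contractible) closed walk in $\Delta$ winding once around the central hole of $P_{p,q,m}$, and so splits the annulus into an outer and an inner region; the chosen orientation of $\widetilde{\mathcal{C}}$ selects one of the two angular senses around the hole, which I label \emph{clockwise}.

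The shared vertices $i, j$ correspond to $m$-diagonals $d_i, d_j$ that appear both as sides of $R$ and on the essential walk $\widetilde{\mathcal{C}}$; they divide $\partial R$ into two complementary arcs of consecutive sides. The key geometric observation is that $R$ straddles the essential walk: one arc of $\partial R$ lies in the outer region cut out by $\widetilde{\mathcal{C}}$ and the other lies in the inner region, with $d_i$ and $d_j$ being the only sides of $R$ common to the walk. As one traverses $\partial R$ counterclockwise around $R$, the corners on the outer arc pivot in the clockwise sense around the hole, while those on the inner arc pivot in the counterclockwise sense; this is precisely the labelling of the arrows of $\mathcal{C}$ as clockwise or counterclockwise induced by the orientation of $\widetilde{\mathcal{C}}$. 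Setting $k$ to be the number of arrows on the outer arc, the inner arc accounts for the remaining $m+2-k$, yielding the stated partition.

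The main obstacle is formalising the claim that $R$ straddles $\widetilde{\mathcal{C}}$ with exactly the shared diagonals $d_i, d_j$ as crossings. The cleanest way is to lift $\partial R$ and $\widetilde{\mathcal{C}}$ to the universal cover of the annulus, where the essential walk lifts to a bi-infinite properly embedded path separating the cover into two half-planes of opposite angular orientation, and where $\partial R$ lifts to a simple closed loop meeting the lifted path in exactly two points (the lifts of $d_i, d_j$). The resulting planar configuration immediately splits $\partial R$ into two sub-arcs, each entirely contained in one of the half-planes, and the orientation reversal at $d_i, d_j$ reduces the claim to counting corners on each sub-arc. A further, minor point is to observe that if additional shared vertices exist, the same analysis applied pairwise shows that the total counts of clockwise and counterclockwise arrows still sum to $m+2$, so the statement remains well-posed.
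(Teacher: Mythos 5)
Your argument is correct and is essentially the paper's own (implicit) justification: the paper offers no proof of this remark, but the configuration you describe --- the saturated $(m+2)$-gon straddling the annulus between the two shared type-$1$ diagonals, with its corners split into an outer arc contributing $k$ arrows and an inner arc contributing the remaining $m+2-k$ --- is exactly the picture drawn at the start of the proof of Lemma \ref{ciclo q comparte mas de una flecha entonces hay flechas - 1 relaciones en el sentido contrario}. Your universal-cover formalisation of the straddling claim, and your closing observation about additional shared vertices, merely make precise what the paper reads off the figure.
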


Given an  $m$-saturated cycle   $\mathcal{C}$ having at least two vertices in common with the root cycle  denote by $\beta_a(\mathcal{C})$ the number of arrows of $\mathcal{C}$ with counterclockwise orientation  less one and $\beta_h(\mathcal{C})$ the number of arrows of $\mathcal{C}$ with clockwise orientation  less one.

Observe that for any such   cycle $\mathcal{C}$, we have  $\beta_a(\mathcal{C})+\beta_h(\mathcal{C})=m$.\\

\begin{lema}\label{ciclo q comparte mas de una flecha entonces hay flechas - 1 relaciones en el sentido contrario}
Let  $\mathcal{C}$ be an   $m$-saturated cycle as in the above remark. Then at least one of the following conditions hold.

 \begin{enumerate}
   \item There are at least $k-1=\beta_h(\mathcal{C})$ strictly  internal counterclockwise relations in the root cycle.
   \item There are at least $m-k+1=\beta_a(\mathcal{C})$ strictly  internal clockwise relations in the root cycle.
   \item There is another  $m$-saturated cycle $\mathcal{C'}$  with $\beta_h(\mathcal{C}')=\beta_a(\mathcal{C})$ and $\beta_a(\mathcal{C}')=\beta_h(\mathcal{C})$.
 \end{enumerate}
        .
\end{lema}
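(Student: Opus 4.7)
The plan is to argue using proposition \ref{el carcaj con relaciones de End(F(delta))}, which identifies the bound quiver of $A$ with that of an $\m$-angulation $\Delta$ of $\P$: relations correspond to triples of consecutive arrows bounding a common $\m$-gon, and the $\m$-gons of $\Delta$ whose sides are all $m$-diagonals give exactly the $m$-saturated cycles of the algebra.

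First I would translate the data: $\mathcal{C}$ corresponds to an $\m$-gon $G$ in $\Delta$ with all $m+2$ sides being $m$-diagonals, and the two shared vertices $i,j$ with the root cycle correspond to two of these sides being type-$1$ diagonals $d_i$ and $d_j$, since the root cycle is built from type-$1$ diagonals spanning the annulus. The sides $d_i$ and $d_j$ split the remaining $m$ sides of $G$ into two arcs $\mathcal{A}^h$ and $\mathcal{A}^a$, located on opposite sides of the annulus relative to the root-cycle traversal, and containing $k-1 = \beta_h(\mathcal{C})$ and $m+1-k = \beta_a(\mathcal{C})$ internal diagonals respectively.

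The main step is then to analyze each arc. Every internal diagonal of $\mathcal{A}^h$ is a side of $G$ on one side and of a uniquely determined other $\m$-gon on the opposite side, and I would argue by the following dichotomy for $\mathcal{A}^h$: either (a) for every internal diagonal of $\mathcal{A}^h$ the adjacent $\m$-gon contains at least one boundary edge of $\P$, in which case the pair of consecutive type-$1$ sides of that adjacent $\m$-gon yields, via the rule defining $I_\Delta$, a strictly internal counterclockwise relation in the root cycle; summing over the $k-1$ internal sides of $\mathcal{A}^h$ then produces $\beta_h(\mathcal{C})$ such relations and proves~(1). Or (b) at least one internal diagonal of $\mathcal{A}^h$ has its adjacent $\m$-gon bounded entirely by $m$-diagonals, producing a second $m$-saturated cycle $\mathcal{C}'$. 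A direct count of the sides of $\mathcal{C}'$ around the annulus then shows that its clockwise and counterclockwise arrow counts are exactly the swap of those of $\mathcal{C}$, so $\beta_h(\mathcal{C}') = \beta_a(\mathcal{C})$ and $\beta_a(\mathcal{C}') = \beta_h(\mathcal{C})$, yielding~(3). A symmetric analysis of the arc $\mathcal{A}^a$ yields either~(2) or~(3); so in every case at least one of (1), (2), (3) must hold.

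The step I expect to be most delicate is verifying case (a): namely that the adjacent $\m$-gon really does possess two consecutive type-$1$ sides whose intermediate composition is a strictly internal relation of the root cycle (in particular not absorbed into some other $m$-saturated cycle) and has the orientation opposite to that of the arc $\mathcal{A}^h$. This reduces to a careful analysis of the possible types of the $m$-diagonals appearing along and adjacent to the arc, relying on the structural results of section~2, in particular lemmas \ref{las 3 diag no pueden ser de tipo 1} and \ref{final de una diagonal comienzo de la otra}.
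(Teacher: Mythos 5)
Your setup is consistent with the paper's picture: the cycle $\mathcal{C}$ does correspond to an $\m$-gon $G$ all of whose sides are $m$-diagonals, two of which are the type-$1$ diagonals $d_i,d_j$ cutting across the annulus, and these split the remaining $m$ sides into two arcs of sizes $\beta_h(\mathcal{C})=k-1$ and $\beta_a(\mathcal{C})=m-k+1$. The gap is in the main step. The strictly internal relations promised by conditions (1) and (2) are counted over the \emph{whole} root cycle and in general sit far away from $\mathcal{C}$; they are not in bijection with the internal diagonals of the arcs of $G$, and your dichotomy (a)/(b) for a single layer of adjacent $\m$-gons is false. Concretely, an internal side of the arc $\mathcal{A}^h$ is a type-$2$ diagonal, and the $\m$-gon on its far side can consist of that diagonal together with $m+1$ boundary edges of the outer polygon (e.g.\ when the diagonal is some $O_{i,m+2}$ on the mouth of the tube); such an $\m$-gon has no pair of consecutive type-$1$ sides, produces no relation of the root cycle and no new $m$-saturated cycle, so neither branch of your dichotomy applies, yet the lemma still holds because the compensating relations occur elsewhere along the root cycle. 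Moreover, even when the adjacent $\m$-gon does contain type-$1$ diagonals, the relation it produces need not be \emph{strictly internal} (both arrows on the root cycle) nor have the orientation you assert.

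The paper's proof is instead a global counting argument: one closes the root cycle by building the $\m$-angulation through the complementary region of the annulus between $d_i$ and $d_j$, and tracks the labels modulo $m$ of the outer and inner polygon vertices. Adding plain arrows, or $m$-saturated cycles meeting the root cycle in only one arrow, leaves both residues unchanged; a clockwise strictly internal relation shifts the outer residue by $+1$ and the inner by $-1$, a counterclockwise one does the opposite; and another $m$-saturated cycle meeting the root cycle in two vertices shifts them by its own $(\beta_h,\beta_a)$ data. Since $\mathcal{C}$ creates an imbalance of $k-1$ on one side and $m-k+1$ on the other, the only ways to return both residues to $0$ modulo $m$ are exactly the three configurations listed, which is how conditions (1)--(3) arise. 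If you want to repair your argument you would have to propagate your adjacency analysis inductively across the entire complementary region rather than a single layer around $G$, at which point it essentially becomes the paper's bookkeeping.
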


\begin{proof}
Suppose that we have a  cycle $\mathcal{C}$ with  $k$ arrows  between the vertices  $i$ and $j$ clockwise oriented and  $m+2-k$ arrows counterclockwise oriented. Then, in the corresponding $\m$-angulation we have the following situation where to simplify the  notation we label the vertices of the outer and inner polygon modulo $m$.

\begin{figure}[H]
\begin{center}
\includegraphics[scale=.7]{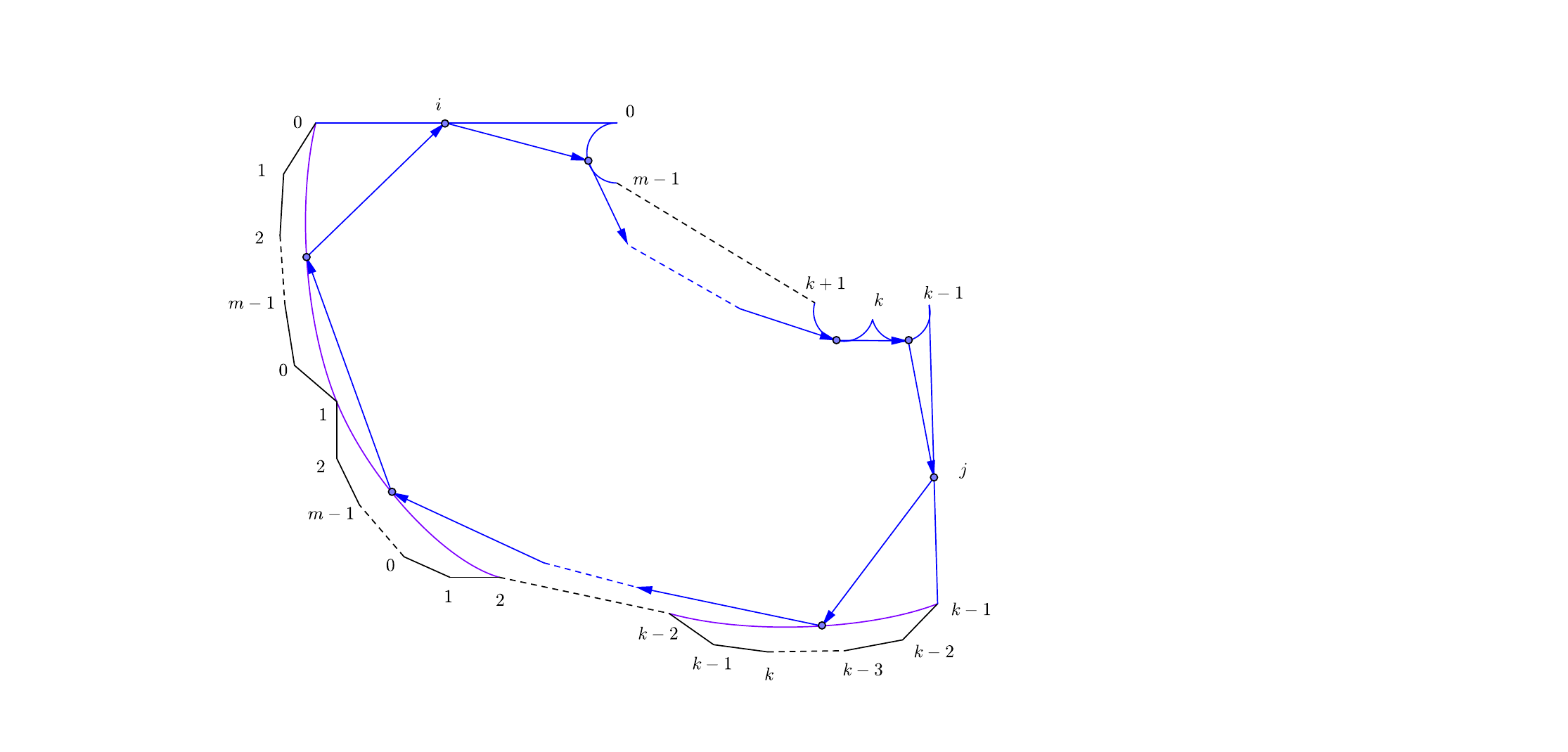}
\end{center}
\end{figure}

If we want to have a non-saturated  cycle involving the vertices $i$ and $j$, we must find a  partition from the region of  $\P$ delimited by the  diagonal $i$, the edge of the outer polygon  lying between the vertices  $0$ and $k-1$ clockwise, the diagonal $j$ and the edge of the inner polygon   lying between the vertices  $k-1$ and $0$ counterclockwise.

If we  add an arrow to the non-saturated cycle we have the following  sub-angulations.

 $$ \begin{array}{cc}
    \hspace*{-2cm} \includegraphics[scale=.8]{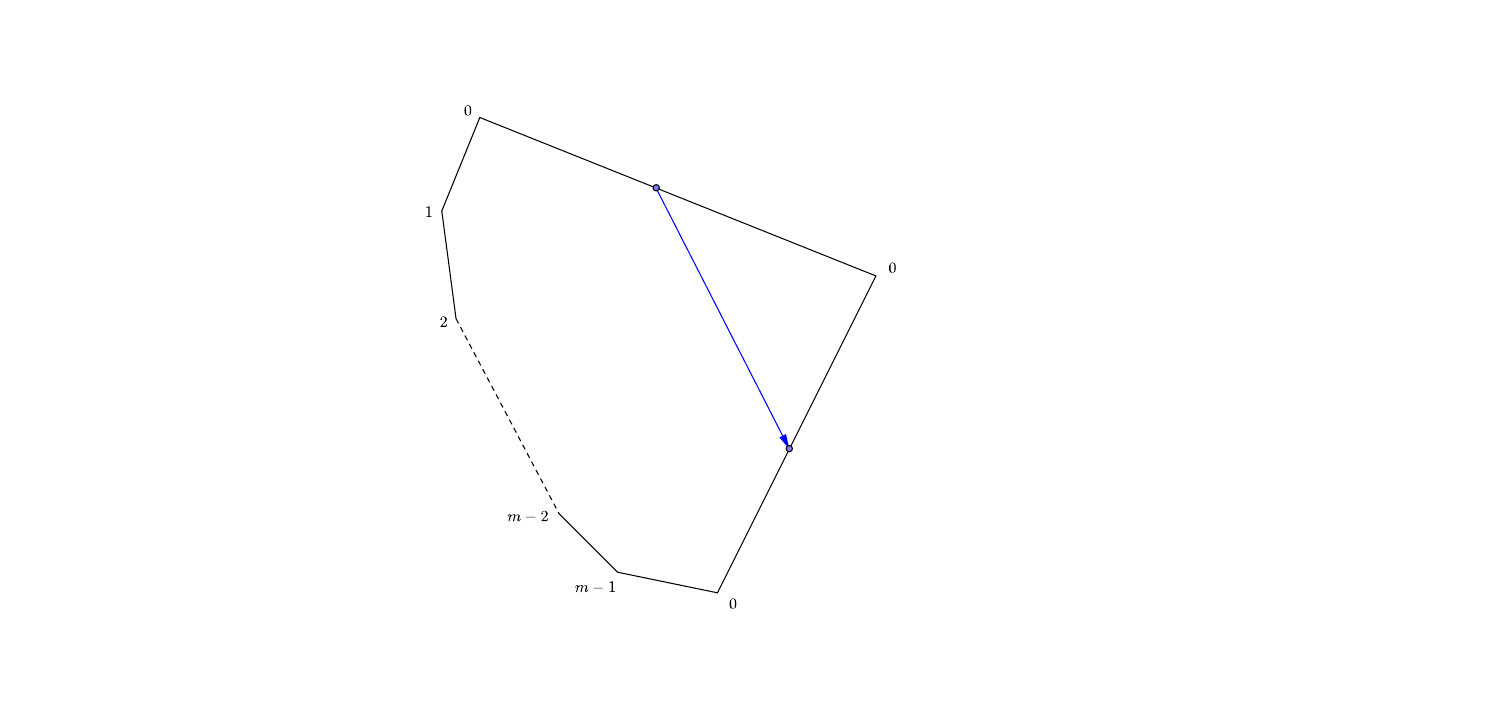} & \hspace*{-4cm}\includegraphics[scale=.8]{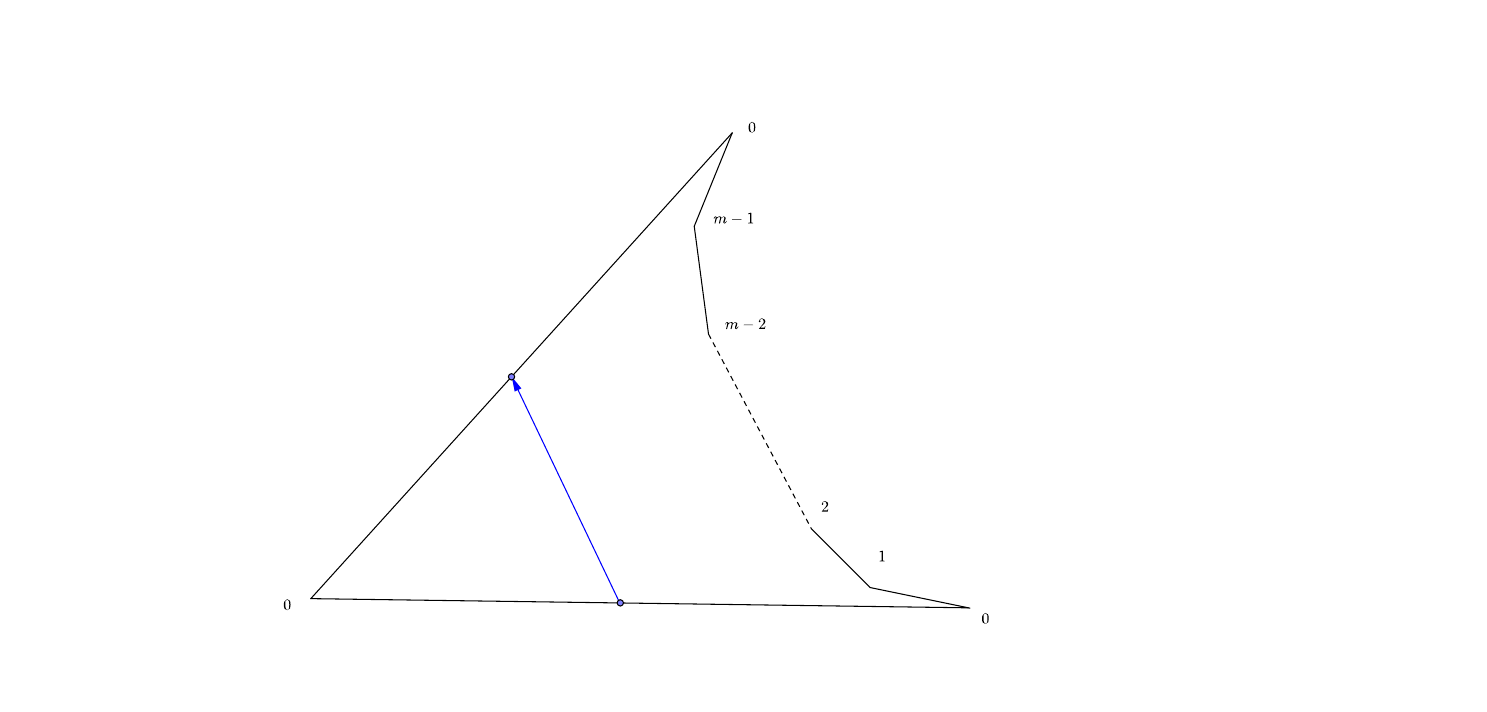} \\
   \hspace*{-3cm} \text{counterclockwise arrow}  & \hspace*{-8cm} \text{clockwise arrow}  \\
    &\\

  \end{array}$$

Then, when we add arrows the number modulo $m$  of vertices of the outer and inner polygon  do not change. We have the same situation when we add an  $m$-saturated cycle sharing just one arrow with the root cycle (the arrow between the vertices $a$ and $b$). See the figure below.

 \begin{figure}[H]
 \begin{center}
 \includegraphics[scale=.6]{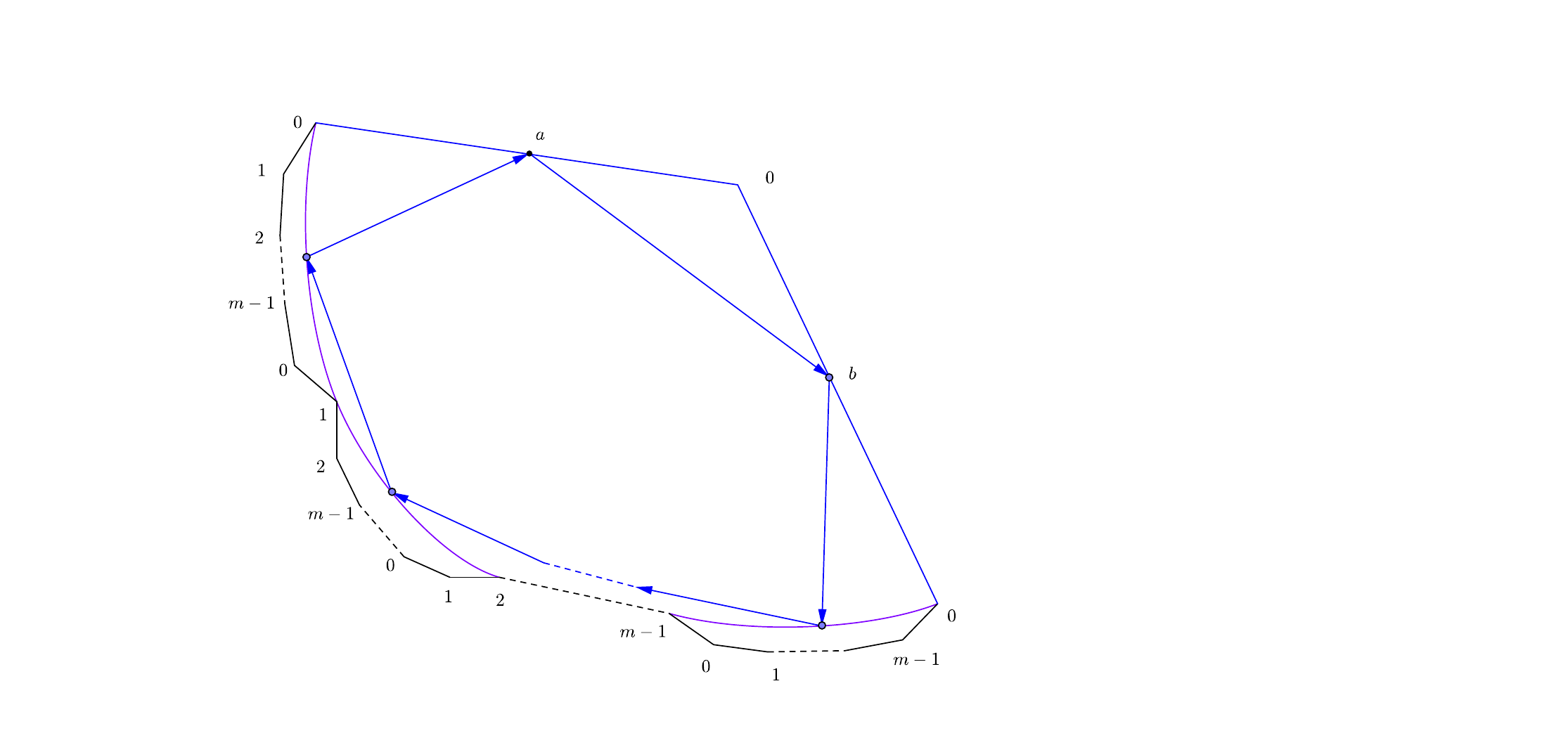}
    \end{center}
  \end{figure}

\vspace{-.7cm} \hspace*{3.5cm}   counterclockwise $m$-saturated cycle

\vspace{.8cm}

If we add a clockwise  relation we have to add one to the  value of the outer polygon vertex and subtract one to the value of the inner polygon vertex. See the following figure.

 \begin{figure}[H]
 \begin{center}
 \includegraphics[scale=.7]{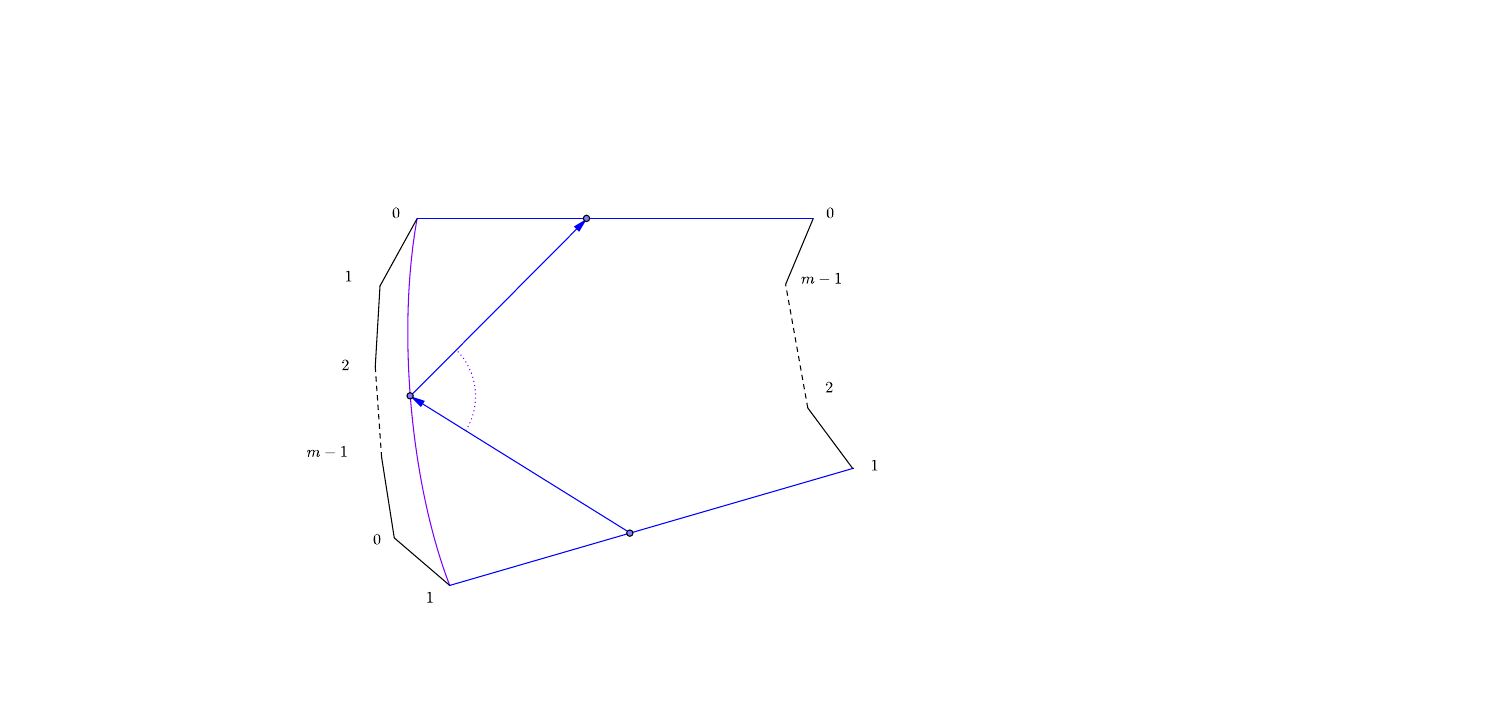}
 \end{center}
 \end{figure}

\vspace{-.8cm} \hspace*{5cm}   clockwise relation

\vspace{.8cm}

 We have the dual case if we add one counterclockwise relation. That is, we subtract one to the value of the outer polygon vertex and we add one  to the value of the inner polygon vertex.

Then, if we add $m-k+1$ clockwise relations the vertex of the outer  polygon is going to take the value  $k-1+ m-k+1=m\equiv 0$ (modulo $m$), and  the vertex of the inner  polygon is going to take the value  $k-1- (m-k+1)\equiv k-1 -(k-1)\equiv 0$ (modulo $m$).

It follows from these considerations that the only ways to "close" the  non-saturated (root) cycle  which contains the vertices  $i$ and $j$ is to add $m-k+1$  internal relations in the clockwise sense, or $k-1$ relations in the counterclockwise sense, or another  $m$-saturated cycle with $k$ arrows in the counterclockwise sense and $m-k+2$ arrows in the clockwise sense.
\end{proof}

The lemma above allows us to make the following definition.

\begin{defi}
Let  $\mathcal{C}$ be an   $m$-saturated cycle sharing at least two vertices with the root cycle. We say that the  cycle $\mathcal{C}$ is  \textit{clockwise} (\textit{or counterclockwise}) if the  condition $1$ ( or $2$, respectively) holds.  If $3$ holds or  $1$ and $2$ simultaneously hold, we say that $\mathcal{C}$ is clockwise if $\beta_h(\mathcal{C})\leq \beta_a(\mathcal{C})$ or counterclockwise otherwise.
\end{defi}

We fix the following  notation, let $\mathfrak{C}_h$ be the set of clockwise   $m$-saturated cycles and $\mathfrak{C}_a$ the set of counterclockwise  $m$-saturated cycles.\\


\begin{defi}
We define the \textit{number of clockwise internal relations} $r_h$ as

$$r_h=\alpha_{h}+\sum _{\mathcal{C}\in \mathfrak{C}_h}  \beta_h(\mathcal{C}) $$

 In the same way we define the number of counterclockwise internal relations  $r_a$ by

$$r_a=\alpha_{a}+\sum _{\mathcal{C}\in \mathfrak{C}_a}  \beta_a(\mathcal{C})$$

\end{defi}

\begin{ejem}
Let $\mathcal{C}$ be the $3$-saturated cycle of the following bound quiver and fix as  root cycle the cycle of length $10$.

\begin{figure}[H]
\begin{center}
\hspace{1cm}\includegraphics[scale=.5]{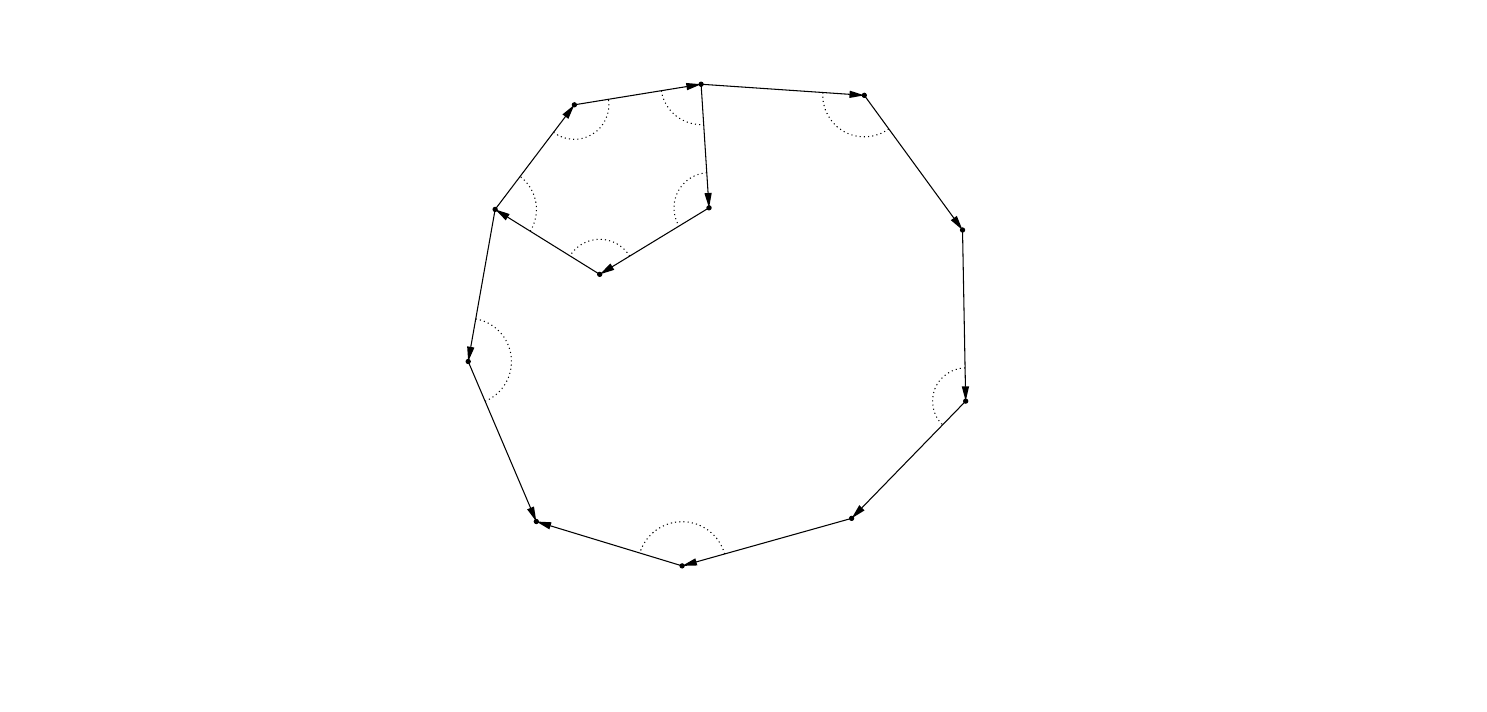}
\end{center}
\end{figure}


Then, $\beta_h(\mathcal{C})=1$ and $\beta_a(\mathcal{C})=2$. Since $\alpha_h=3>\beta_a(\mathcal{C})$, $\alpha_a=1=\beta_h(\mathcal{C})$ and $\beta_h(\mathcal{C})\leq \beta_a(\mathcal{C})$ ; the cycle $\mathcal{C}$ is counterclockwise.\

The number of clockwise internal relations is  $r_h= 3$ and the number of counterclockwise internal relations  is $r_a= 1+2=3$. Then $r_h\equiv r_a$ modulo $m=3$.

\end{ejem}


In general, we have the following property.\\

\begin{prop}\label{internal relations} Let $A$ be a connected  $m$-cluster tilted algebra of  type $\tilde{\mathbb{A}}$ with a root cycle. If there are internal relations on the root cycle, then the  number modulo $m$ of clockwise internal relations is equal to the number of counterclockwise internal  relations. That is,  $r_h\equiv r_a$ modulo $m$.

\end{prop}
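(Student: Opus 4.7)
The plan is to view the root cycle $\widetilde{\mathcal{C}}$ as a closed walk in the polygon $P_{p,q,m}$ under the $(m+2)$-angulation $\Delta$, and to compute the net change of the mod-$m$ residue of a tracked outer-polygon vertex as one traverses $\widetilde{\mathcal{C}}$ once. Since the walk closes, this net change must be $0$ modulo $m$, and all the work is in showing that this accumulated change coincides with $r_h - r_a$.

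First, I would fix an orientation of $\widetilde{\mathcal{C}}$ and decompose the walk into the elementary local configurations already classified in the proof of Lemma~\ref{ciclo q comparte mas de una flecha entonces hay flechas - 1 relaciones en el sentido contrario}: (i) an ordinary arrow of $\widetilde{\mathcal{C}}$, (ii) a ray or an $m$-saturated cycle attached to $\widetilde{\mathcal{C}}$ through a single arrow, (iii) a strictly internal clockwise or counterclockwise relation, and (iv) an $m$-saturated cycle $\mathcal{C}$ sharing at least two vertices with $\widetilde{\mathcal{C}}$. The computations carried out in the proof of that lemma directly assign to each local piece its contribution to the residue: configurations of types (i) and (ii) contribute $0$ modulo $m$; each clockwise strictly internal relation contributes $+1$ and each counterclockwise one contributes $-1$; and a cycle of type (iv) contributes $+\beta_h(\mathcal{C})$ when $\mathcal{C}\in\mathfrak{C}_h$ and $-\beta_a(\mathcal{C})$ when $\mathcal{C}\in\mathfrak{C}_a$, since in each of these cases $\mathcal{C}$ plays the role of, respectively, a batch of $\beta_h(\mathcal{C})$ clockwise relations or $\beta_a(\mathcal{C})$ counterclockwise relations when one "closes" the segment of the non-saturated cycle passing through $\mathcal{C}$.

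Second, summing contributions along the closed walk $\widetilde{\mathcal{C}}$ and using that the residue must return to its starting value yields
\[
\Bigl(\alpha_h+\sum_{\mathcal{C}\in\mathfrak{C}_h}\beta_h(\mathcal{C})\Bigr)-\Bigl(\alpha_a+\sum_{\mathcal{C}\in\mathfrak{C}_a}\beta_a(\mathcal{C})\Bigr)\equiv 0\pmod{m},
\]
which is exactly $r_h\equiv r_a\pmod m$. The hypothesis that there exist internal relations in the root cycle is used to guarantee that this balance equation is non-vacuous: by Remark after Proposition~\ref{el carcaj con relaciones de End(F(delta))}, an oriented root cycle must carry at least one internal relation, so both sums are genuine.

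The main obstacle will be the bookkeeping for $m$-saturated cycles of type (iv) in the ambiguous boundary case of Lemma~\ref{ciclo q comparte mas de una flecha entonces hay flechas - 1 relaciones en el sentido contrario}, where both conditions (1) and (2) of that lemma can be realised simultaneously. One needs to check that the convention in the definition of $\mathfrak{C}_h$ and $\mathfrak{C}_a$ (choosing clockwise when $\beta_h(\mathcal{C})\leq\beta_a(\mathcal{C})$) is consistent with the residue computation, namely that the two possible closings of the relevant segment of the non-saturated cycle produce congruent contributions modulo $m$; this amounts to observing $\beta_h(\mathcal{C})+\beta_a(\mathcal{C})=m$, so that $+\beta_h(\mathcal{C})\equiv -\beta_a(\mathcal{C})\pmod m$. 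A secondary but purely technical point is to handle several $m$-saturated cycles attached along $\widetilde{\mathcal{C}}$ without double-counting any shared arrow, which follows from the gentleness of the bound quiver established in Proposition~\ref{son amables}.
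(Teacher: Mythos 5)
Your proposal is correct and follows essentially the same route as the paper: both arguments reduce to the mod-$m$ residue bookkeeping established in the proof of Lemma \ref{ciclo q comparte mas de una flecha entonces hay flechas - 1 relaciones en el sentido contrario} (arrows and singly-attached saturated cycles contribute $0$, clockwise and counterclockwise relations contribute $\pm 1$, doubly-attached $m$-saturated cycles contribute $\beta_h$ or $-\beta_a$), and conclude from the closing of the root cycle that the total vanishes modulo $m$. Your explicit observation that $\beta_h(\mathcal{C})+\beta_a(\mathcal{C})=m$ resolves the ambiguous orientation convention is a welcome clarification of a point the paper's proof passes over quickly.
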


\begin{proof}
To begin, assume that the set $\mathfrak{C}_a\cup \mathfrak{C}_h$ is empty. Then, the  discussion in lemma \ref{ciclo q comparte mas de una flecha entonces hay flechas - 1 relaciones en el sentido contrario} says that each time we add a clockwise oriented relation, we must in order to close the root cycle, add also a counterclockwise oriented relation. By the same lemma it also  follows that if we add  a multiple of $m$  relations in any sense, then the values of the vertices of the outer or inner polygons do not change. Now, assume there is a cycle $\mathcal{C}$ in $\mathfrak{C}_a$. Then, there are $\beta_h(\mathcal{C})$ counterclockwise strictly internal relations. Therefore, $\alpha_a=\alpha_a'+\beta_h(\mathcal{C})$ with $\alpha_a'\equiv \alpha_h$ modulo $m$. In consequence, $r_a=\alpha_a'+\beta_h(\mathcal{C})$ and $r_h=\alpha_h+ \beta_h(\mathcal{C})$. Clearly $r_h\equiv r_a$ modulo $m$. Similar considerations apply if we have an arbitrary number of  clockwise or counterclockwise $m$-saturated cycles.
\end{proof}

Finally, summarizing the results obtained about the bound quiver of an
 $m$-cluster tilted algebra of type $\widetilde{\mathbb{A}}$ we get our main theorem.

\begin{teo}
Let $(Q,I)$ be a connected bound quiver. Then $(Q,I)$ is the bound quiver of a connected component of an  $m$-cluster tilted algebra $A$ of type $\widetilde{\mathbb{A}}$ if and only if  $(Q,I)$ is a gentle  quiver such that:
\begin{itemize}
  \item [(a)] It can contain a non-saturated  cycle  $\widetilde{\mathcal{C}}$ in such a way that $A$ is an algebra with root $\widetilde{\mathcal{C}}$.
  \item [(b)] If it contains more cycles, then all of them are  $m$-saturated cycles.
  \item [(c)] Outside of an $m$-saturated cycle it can have at most  $m-1$ consecutive relations.
  \item [(d)] If  $\widetilde{\mathcal{C}}$ is an oriented  cycle, then it must have at least one internal relation.
  \item [(e)] If there are internal relations in the root cycle, then   the number  of clockwise oriented  relations   is equal modulo $m$ to the number of counterclockwise oriented.

\end{itemize}
\end{teo}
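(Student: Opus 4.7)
The plan is to prove both directions of the equivalence, using the geometric model $P_{p,q,m}$ as the bridge between bound quivers and $m$-cluster tilting objects.

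For the necessity direction ($\Rightarrow$), the argument essentially assembles results already established in the paper. If $A = \End(F(\D))$ for some $(m+2)$-angulation $\D$ of some $P_{p,q,m}$, then by Proposition~\ref{el carcaj con relaciones de End(F(delta))} the bound quiver of $A$ is $(Q^0_\D, I_\D)$, and Proposition~\ref{son amables} yields that this is gentle. Conditions (a) and (b) follow from Remarks~\ref{no existen masde m-1 relaciones seguidas Atilde}: the non-saturated cycle $\widetilde{\mathcal{C}}$ corresponds to the ``hole'' inside $P_{p,q,m}$, every other cycle must be $m$-saturated (coming from an $(m+2)$-gon), and at most $m-1$ consecutive relations can appear outside such a cycle. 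Condition (d) uses the observation that if $\widetilde{\mathcal{C}}$ is oriented, then the corresponding configuration of diagonals must include at least one triangulating diagonal creating an internal zero relation; otherwise the ``hole'' cannot be properly closed. Condition (e) is exactly Proposition~\ref{internal relations}.

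For the sufficiency direction ($\Leftarrow$), the harder part, I would construct an $(m+2)$-angulation $\D$ of some $P_{p,q,m}$ whose associated bound quiver $(Q^0_\D, I_\D)$ is isomorphic to $(Q,I)$. The construction proceeds cycle-first: the root cycle $\widetilde{\mathcal{C}}$ is realized by a chain of $m$-diagonals of type $1$ (and, where internal relations force it, of types $2$ and $3$) that together enclose the inner polygon. The parameters $p$ and $q$ are read off from the combinatorial data of $\widetilde{\mathcal{C}}$: counting the vertices of $Q$ lying on $\widetilde{\mathcal{C}}$, the orientation of its arrows, and the distribution of internal relations. Each ray of $A$ attached to $\widetilde{\mathcal{C}}$ at a union vertex is then realized by a fan of diagonals sharing that vertex, with $m$-saturated subcycles translating to $(m+2)$-gons in the angulation. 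The gentleness of $(Q,I)$ plus condition (c) guarantees that the fans fit locally into $(m+2)$-gons without crossings, so the construction is coherent.

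The main obstacle is the global closure of the construction, and this is where condition (e) is essential. When we walk around $\widetilde{\mathcal{C}}$ and add internal relations, each clockwise relation shifts the outer-polygon index by $+1 \pmod m$ while adjusting the inner-polygon index by $-1$, and each counterclockwise relation does the opposite (as laid out in the proof of Lemma~\ref{ciclo q comparte mas de una flecha entonces hay flechas - 1 relaciones en el sentido contrario}); the $m$-saturated cycles contribute in the same bookkeeping via $\beta_h(\mathcal{C})$ and $\beta_a(\mathcal{C})$. The polygon closes up into a valid $P_{p,q,m}$ exactly when $r_h \equiv r_a \pmod m$, which is precisely condition (e). Once $\D$ is built, Proposition~\ref{el carcaj con relaciones de End(F(delta))} identifies $\End(F(\D))$ with $\K Q_\D^0/I_\D$, and a final verification (by matching vertices, arrows, and the generators of $I_\D$ with the given relations of $I$) shows $\K Q/I$ is a connected component of $\End(F(\D))$, completing the proof. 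The non-connectedness allowed in the $m\geq 2$ case, as noted after the main theorem, is accommodated by allowing $\D$ to contribute other components that are suppressed when we restrict to $(Q,I)$.
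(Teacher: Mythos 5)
Your proposal is correct and follows essentially the same route as the paper: the necessity direction is assembled from Proposition \ref{son amables}, Remarks \ref{no existen masde m-1 relaciones seguidas Atilde}, the definition of an algebra with root, and Proposition \ref{internal relations}, while the sufficiency direction constructs an $\m$-angulation by the bookkeeping of Lemma \ref{ciclo q comparte mas de una flecha entonces hay flechas - 1 relaciones en el sentido contrario}, with condition (e) guaranteeing closure. Your write-up of the converse is in fact more explicit than the paper's, which only sketches it and refers to an illustrative example.
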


\begin{proof}
Observe that $(Q,I)$ is gentle for proposition \ref{son amables}, $(a),(b)$ and $(c)$ follow from remark \ref{no existen masde m-1 relaciones seguidas Atilde}; $(d)$ is the second condition for being an algebra with root and finally $(e)$ is the statement of proposition \ref{internal relations}.
The converse statement may be proved in much the same way as lemma \ref{ciclo q comparte mas de una flecha entonces hay flechas - 1 relaciones en el sentido contrario}. See the following example to ilustrate the procedure.
\end{proof}

\begin{ejem} Let $(Q,I)$ be the following bound quiver.

\[\xymatrix@R=0.3pc@C=.3pc{  &&& h \ar[rrr] &  \ar@{.}@/^/[lld] && a \ar[rrrddd] &&& \\
&&& &&& &&& \\
&&& &&& &&& \\
g \ar[uuurrr] \ar[ddd] &&&  &&&  &&& b \\
&&& &&& &&& \\
&&& &&& &&&   \\
f  \ar[dddrrr] &&& &&& &&& c \ar[uuu]\\
&&& &&& && \ar@{.}@/^/[ruu] & \\
&&& &&& &&& \\
&&& e &&& d \ar[rrruuu] \ar[lll] &&&}\]


We want to find an $\m$-angulation of a certain polygon $P_{p,q,m}$. We start with the diagonal corresponding to the vertex $a$. Since the arrow $a\rightarrow b$ is clockwise oriented and in $a$ there is not involved any relation, $a$ is in correspondence with a diagonal of type $1$, say the diagonal $d_a$ between the vertices $O_0$ and $I_0$ and $b$ is in correspondence with another diagonal of type $1$, label  $d_b$ between the vertices $O_0$ and $I_{m}$. In $c$ there is a counterclockwise internal relation, then $c$ is in correspondence with the diagonal of type $3$ between the vertices $I_m$ and $I_{2m+1}$ and $d$ is in correspondence with the diagonal $d_d$ of type $1$ between the vertices $O_{3m+1}$ and $I_{2m+1}$. Since in $e$ there is not a relation and the arrow $d\rightarrow e$ is clockwise oriented, $e$ is in correspondence with a diagonal $d_e$ of type $1$ between the vertices $O_{2m+1}$ and $I_{3m+1}$. Analogously, since  in $f$ there is no a relation and the arrow $g\rightarrow f$ is counterclockwise oriented, $g$ is in correspondence with a diagonal $d_g$ of type $1$ between the vertices $O_{m+1}$ and $I_{3m+1}$. Finally, since in $h$ there is a clockwise oriented relation, $h$ is in correspondence with a diagonal of type $2$ between the vertices $O_{m+1}$ and $O_0$. Therefore we get the following $\m$-angulation of $P_{4,4,m}$ which proves that $(Q,I)$ is the bound quiver of a connected   $m$-cluster tilted algebra of type $\widetilde{\mathbb{A}}_{4,4}$.

\begin{figure}[H]
\begin{center}
\includegraphics[scale=.5]{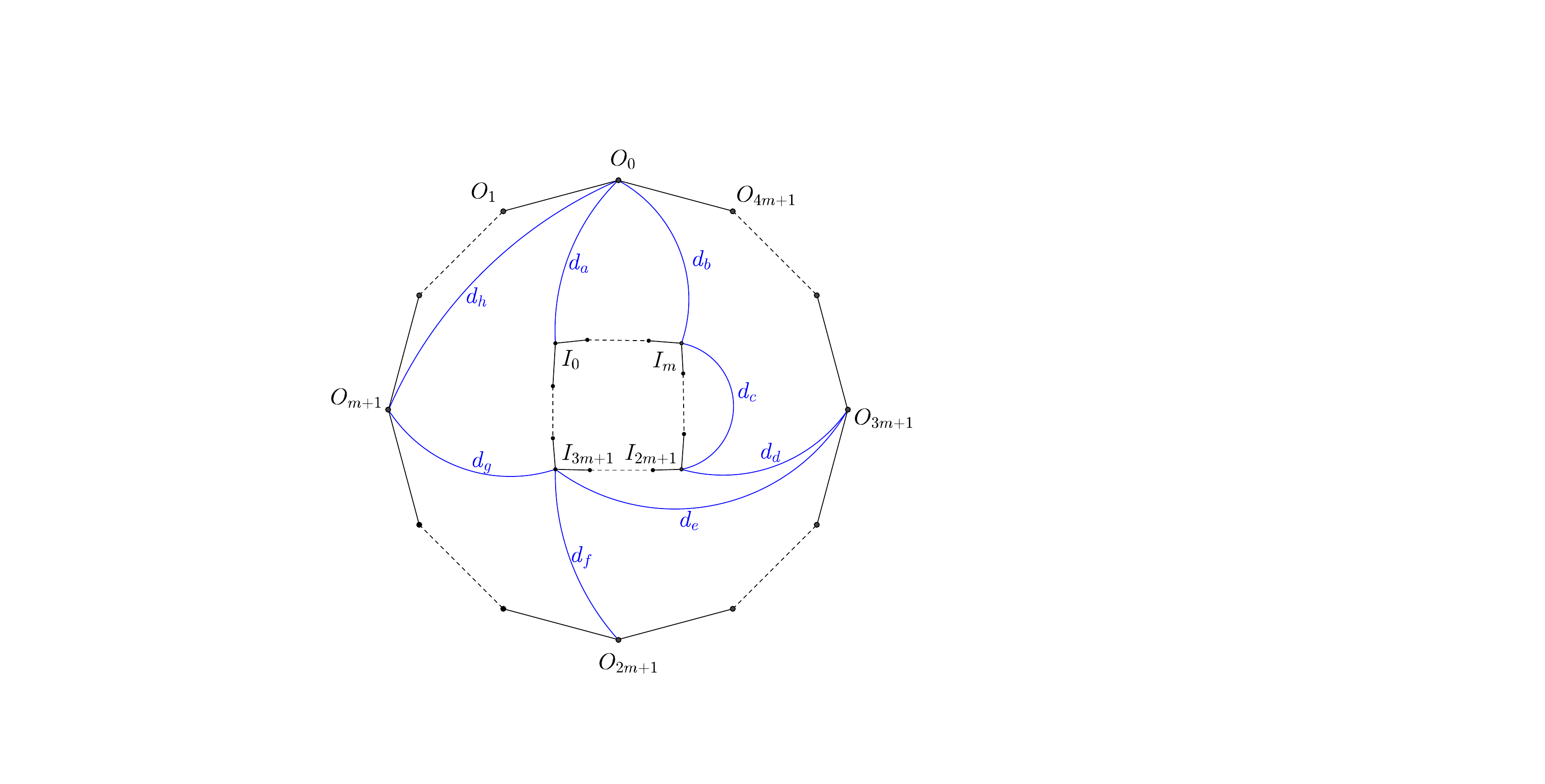}
\end{center}
\end{figure}

 \end{ejem}

%
%
%
%
%

\section*{Acknowledgements}
The author gratefully thanks Ibrahim Assem for several interesting and helpful discussions. This paper is part of the  author's Ph.D. thesis, done under the direction of Ibrahim Assem, she gratefully acknowledges financial support from the Faculty of Sciences of the Universit\'e de Sherbrooke, and  the  \emph{Agencia Nacional de Investigaci\'{o}n e Innovaci\'{o}n (ANII)} of Uruguay.

\bibliographystyle{acm}

\bibliography{library}

\end{document}